\documentclass[11pt,reqno]{amsart}
\usepackage{amsthm,amsmath,amssymb}
\usepackage{graphicx}
\usepackage{float}
\usepackage{mathabx}
\usepackage[colorlinks=true,
linkcolor=blue,
urlcolor=blue,
citecolor=blue]{hyperref}
\usepackage{mathtools}
\usepackage{relsize}
\usepackage{tikz}
\usepackage{enumerate}
\usepackage[toc,page]{appendix}
\usepackage{pdflscape}
\usepackage{multirow}
\usepackage{adjustbox,expl3,etoolbox} 
\usepackage[margin = 3cm]{geometry}
\usepackage{rotating}
\usepackage{tablefootnote}
\usepackage{longtable}
\usepackage{comment}
\usepackage{xcolor}
\usepackage{mathrsfs}
\usepackage[all]{xy}

\usepackage{enumitem}

\newcommand\DN[1] {{\color{blue} {#1}}}

\newtheorem{thm}{Theorem}[section]
\newtheorem{lem}[thm]{Lemma}
\theoremstyle{definition}

\newtheorem{prop}[thm]{Proposition}

\newtheorem{cor}[thm]{Corollary}
\newtheorem{exa}[thm]{Example}
\newtheorem{rmk}[thm]{Remark}
\newtheorem{prob}[thm]{Problem}

\DeclareMathOperator\Aut{Aut}

\DeclareMathOperator{\soc}{soc}
\DeclareMathOperator{\Mon}{Mon}
\DeclareMathOperator{\mon}{Mon}
\DeclareMathOperator{\supp}{supp}
\DeclareMathOperator\PSL{PSL}
\DeclareMathOperator\AGL{AGL}
\DeclareMathOperator\PGL{PGL}
\DeclareMathOperator\ASL{ASL}
\DeclareMathOperator\GL{GL}
\DeclareMathOperator\SL{SL}

\newcommand{\F}{\mathbb{F}}
\newcommand{\Z}{\mathbb{Z}}

\newcommand\PGaL{{\rm P}\Gamma{\rm L}}
\newcommand\oline[1] {{\overline{#1}}}

\title{Monodromy groups of polynomials of composition length $2$ }

\author{Angelot Behajaina}
\address{Univ. Lille, CNRS, UMR 8524, Laboratoire Paul Painlevé, F-59000 Lille, France}
\email{angelot.behajaina@univ-lille.fr}

\author{Joachim König}
\address{Department of Mathematics Education, Korea National University of Education, Cheongju, South Korea}
\email{jkoenig@knue.ac.kr}

\author{Danny Neftin}
\address{Department of Mathematics, Technion - Israel Institute of Technology, Haifa, Israel}
\email{dneftin@technion.ac.il}

\begin{document}
	\begin{abstract}
		We study the monodromy groups of compositions of two indecomposable polynomials. In particular, we show that such monodromy groups either fulfill a certain ``largeness" property, or belong to an explicit list of exceptions. Such largeness results are crucial for dealing with compositions of more than two polynomials, and consequently are expected to have a wide range of applications to problems concerning the arithmetic of polynomials and arithmetic dynamics. In particular, our main result is a key ingredient in the solution of a long-standing open problem due to Davenport, Lewis and Schinzel, achieved in the companion paper \cite{BKN25}.
	\end{abstract}
	\subjclass{14H30 (primary); 11R32, 12E05, 20B05, 37P15 (secondary)}
	\maketitle
	\section{Introduction}
	Let $k$ be a field of characteristic $0$. The \emph{(arithmetic) monodromy group} of $f \in k[X] \setminus k$ over $k$, denoted by ${\rm Mon}_k(f)$ (or simply $\Mon(f)$ when there is no risk of confusion), is the Galois group of $f(X)-t$ over $k(t)$, viewed as a permutation group acting on the generic fiber $f^{-1}(t) \subset \overline{k(t)}$. The \emph{geometric monodromy group} of $f$ is the normal subgroup ${\rm Mon}_{\overline{k}}(f)\trianglelefteq \Mon_k(f)$.
	
	The classification of monodromy groups of indecomposable polynomials over $\mathbb{C}$ and $\mathbb{Q}$, along with their possible ramification types, was carried out by M\"{u}ller \cite{Mul2}. The next step is to address the following:
	\begin{prob}\label{qst:complength}
		Classify the monodromy groups of polynomials of length $2$ over $\mathbb{C}$ and
		$\mathbb{Q}$.
	\end{prob}
	\noindent
	Recall that the \emph{(composition) length} of $f \in k[X] \setminus k$ is the number of factors in a decomposition of $f$ as a composition of indecomposable polynomials.\footnote{By Ritt's theory (see \cite{MZ09}), this number is independent of the chosen decomposition.}

	We say that $f,g \in k[X]$ are \emph{linearly equivalent} (over $\overline{k}$) and write $f\sim g$, if there exist linear polynomials $\mu,\nu \in \overline{k}[X]$ such that $g=\mu \circ f \circ \nu$.  Linearly equivalent polynomials have the same geometric monodromy group.
	Note that an upper bound for the  monodromy group of a composition $f=g\circ h$ is given by (the imprimitive wreath product) $\Mon(h)\wr \Mon(g) = \Mon(h)^{\deg(g)}\rtimes \Mon(g)$. In the case of equality,  $\ker(\Mon(g\circ h)\to \Mon(g)) = \Mon(h)^{\deg(g)}$ is as large as possible. It turns out that, for many applications (see, e.g., \cite{KNR24} for applications in arithmetic dynamics), a slight weakening of this property is sufficient: namely, for $g,h\in k[X]$ of degree $>1$ with $h$ indecomposable, we say that $g\circ h$ has a {\it large kernel} if either
	$$
	\soc(\Mon_k(h))^{\deg(g)}\le \ker(\Mon_k(g\circ h)\to \Mon_k(g)),\footnote{For a group $G$, the \emph{socle} $\soc(G)$ denotes the subgroup generated by all minimal normal subgroups of $G$. Note that for an indecomposable $h$, the socle $\soc(\Mon_k(h))$ is always either a simple group or equal to $V_4\triangleleft S_4$.}$$
	or 
	$$\soc(\Mon_k(h))\,\,\textrm{is cyclic and}\,\, \soc(\Mon_k(h))^{\deg(g)-1}\le \ker(\Mon_k(g\circ h)\to \Mon_k(g)).$$
	
	Recall finally that the (normalized) \emph{Chebyshev polynomial} $T_n \in \Z[X]$ of degree $n \geq 1$ is uniquely determined by the identity $T_n(X+1/X)=X^n+1/X^n$. Chebyshev polynomials and monomials $X^n$ are at the other end of ``large kernel", fulfilling $\Mon_{\overline{k}}(T_n)=D_n$ ($n\ge 3$) and $\Mon_{\overline{k}}(X^n)=C_n$, and thus $\ker(\Mon_{\overline{k}}(T_n\circ T_m)\to \Mon_{\overline{k}}(T_n)) = \ker(\Mon_{\overline{k}}(X^n\circ X^m)\to \Mon_{\overline{k}}(X^n)) = C_m$. A more diverse source for violation of the large kernel property (cf.\ Proposition \ref{prop:rittimpldiag}) is pairs $f,g$ admitting a {\it Ritt move}; see Section \ref{sec:detailed_thms} for a definition. 
	
	Our main theorem states in a precise way that, for length-$2$ polynomials, the above examples are ``almost" the only ones violating the large kernel property:
	\begin{thm}\label{thm:mainres}
		Let $g,h\in k[X]$ be indecomposable polynomials of degree $>1$. 
		Then $g\circ h$ has a large kernel unless one of the following cases holds:
		\begin{enumerate}[label=\textbf{\arabic*.},ref=\arabic*]
			\item Over $\overline{k}$, one has $g\circ h\sim T_{p^2}$ or $g\circ h \sim X^{p^2}$ for some prime $p$;
			\label{mainthm_xntn}
			\item $g\circ h$ has a Ritt move;
			\label{mainthm_ritt}
			\item $\Mon(g\circ h)$ is one of the groups in Table \ref{table:sporcases}. In particular, one of the following holds:
			\label{mainthm_exc}
			\begin{enumerate}[label=\textbf{\alph*.}, ref=\alph*]
				\item $h\sim X^2$ and $\Mon_{\overline{k}}(g)\in \{S_4, { \PGL}_2(5), { \PSL}_3(2), { \PGL}_2(7),\\ \PGaL_2(9), { M}_{11}, { \PSL}_3(3), { \PSL}_4(2), \PGaL_3(4), { M}_{23}, {\rm \PSL}_5(2)\}$;
				\label{mainthm_exc_x2}
				\item  $h\sim X^3$ and $\Mon_{\overline{k}}(g)\in \{A_5, { \PGL}_2(7), { \PSL}_2(11), { \PSL}_3(3)\}$;
				\label{mainthm_exc_x3}
				\item $h\sim T_3$ and $\Mon_{\overline{k}}(g)=\PSL_3(3)$.
				\label{mainthm_exc_t3}
			\end{enumerate}
		\end{enumerate}
		In particular, if $\Mon(g),\Mon(h)$ are both solvable, then $g\circ h$ either has large kernel or is linearly equivalent to a monomial or a Chebyshev polynomial, or one of only two other cases holds: 
		\begin{itemize}
			\item $\Mon_{\overline{k}}(g \circ h) = C_3\times S_4\le S_{12}$, with a Ritt move;
			\item $\Mon_{\overline{k}}(g \circ h)=\GL_{2}(3)\le S_8$, without a Ritt move.
		\end{itemize}
	\end{thm}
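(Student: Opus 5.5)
I would first reduce to the geometric setting. Since $\Mon_{\overline{k}}(g\circ h)\trianglelefteq \Mon_k(g\circ h)$ and the kernels are compatible, the condition $\soc(\Mon(h))^{\deg g}\le \ker$ (or its cyclic variant) over $\overline{k}$ implies it over $k$. The only subtlety is that $\soc(\Mon_k(h))$ may exceed $\soc(\Mon_{\overline{k}}(h))$. By Müller's classification \cite{Mul2} this happens only in a few explicit situations, for example $h\sim X^p$ or $T_p$ where the arithmetic group is affine. I would treat those by hand. So the main work is to show that $K:=\ker(G\to\Mon(g))$, with $G=\Mon_{\overline{k}}(g\circ h)$, contains $N^{n}$ or $N^{n-1}$, where $N=\soc(\Mon(h))$ and $n=\deg g$.

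The key structural step uses the action of the branch cycles. Let $x_1,\dots,x_r,x_\infty$ be the branch cycles of $g\circ h$. Their images in $\Mon(g)$ form the branch cycles of $g$. Their restrictions to each block are products of branch cycles of $h$ at the preimages under $g$, by the standard description of the ramification of compositions. Let $K_i$ be the projection of $K$ to the $i$-th block. It is normal in $\Mon(h)$, and by Riemann--Hurwitz together with Müller's list one shows $K_i\supseteq N$, unless $h$ is a monomial or Chebyshev of prime degree. When $N$ is nonabelian simple, $K\cap N^n$ is a subdirect product of copies of $N$, hence a product of diagonal subgroups over a $\Mon(g)$-invariant partition of the $n$ blocks. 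Largeness fails exactly when this partition is nontrivial. I would then show that a nontrivial diagonal structure forces a coincidence of ramification data between fibers of $g$. Using the genus-$0$ condition, i.e. Riemann--Hurwitz applied to the intermediate cover associated with $K\cap N^n$, this leads to a Ritt-move configuration; Proposition \ref{prop:rittimpldiag} gives the converse direction.

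The abelian case, $h\sim X^p$ or $T_p$, is where the exceptions live, and I expect it to be the main obstacle. Here $N=C_p$, and $K\cap C_p^{\,n}$ is an $\F_p[\Mon(g)]$-submodule of the permutation module $\F_p^n$. Large kernel means this submodule contains the augmentation submodule. The branch cycles of $g\circ h$ provide explicit elements of the kernel, whose supports are the fibers of $g$ over branch points with prescribed weights. The kernel is therefore the submodule generated by the resulting vectors, constrained by the product-one relation and the genus-$0$ condition. I would run through Müller's list of primitive genus-$0$ polynomial monodromy groups $\Mon(g)$ together with their ramification types. For $\Mon(g)$ equal to $A_n$ or $S_n$, or doubly transitive with simple augmentation module, the conclusion is immediate. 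For the remaining cases ($p=2,3$ and the small linear and Mathieu groups, whose permutation modules have extra submodules) I would compute the submodule lattice of $\F_p^n$ and check which ramification types land in a proper submodule. This produces Table \ref{table:sporcases}. The cyclic or dihedral $\Mon(g)$ gives case \ref{mainthm_xntn} or a Ritt move.

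For the solvable corollary I would just intersect the list with the solvable groups. In Müller's list the solvable indecomposable monodromy groups are only $C_p$, $D_p$, $S_4$ (degree 4), and the affine groups in degree $p$. Scanning the table then gives $S_4$ with $h\sim X^2$, which yields $\GL_2(3)\le S_8$, and $S_4\wr$-type data with $C_3$, which yields the Ritt-move example $C_3\times S_4\le S_{12}$.
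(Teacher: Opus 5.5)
Your overall strategy is the paper's: reduce to $\overline{k}$, split according to M\"uller's list, use powers of branch cycles to get kernel elements of small support, then use the submodule structure of permutation modules and computation for the small exceptional groups. However, the case analysis has a genuine hole, and one step is modelled incorrectly.

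Besides $N$ nonabelian simple and $h\sim X^p,T_p$, there is the case $\Mon(h)=S_4$ with $N=V_4$. You never treat it, although it is the content of Theorems \ref{thm:affS4II}, \ref{thm:S4S4} and \ref{thm:nonsS4I}. Here largeness requires all of $V_4^{\deg g}$, since $V_4$ is not cyclic and there is no codimension-one relaxation. Moreover, $K\cap V_4^n$ is not a module for $\Mon(g)$, because $K$ itself acts on $V_4^n$ through its image in $S_3^n$. One gets $V_4^n\le K$ only after showing that this image contains elements of order $3$ whose supports meet in exactly one block; then the iterated commutators of Lemmas \ref{lem:fullsocle}, \ref{lem:normal}(2) and \ref{lem:support} isolate a single component. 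That first step is itself a kernel problem for $g\circ\varphi$, with $\varphi$ the cubic rational function attached to $V_4\triangleleft S_4$. The paper handles it via Lemma \ref{lem:snTp} for $A_n,S_n$, and by branch-cycle arguments or computer searches otherwise. For $\PSL_3(3)$ and $\PSL_2(11)$ the $3$-part genuinely stays small.

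The same twisting defeats your treatment of $h\sim T_p$. There $K\cap C_p^n$ is \emph{not} an $\F_p[\Mon(g)]$-submodule of the permutation module, since elements of $K$ with nontrivial image in $C_2^n$ invert the corresponding $C_p$-components. The paper needs a dichotomy. If $\tilde\Gamma=K/(K\cap C_p^n)$ has order $>2$, it contains a vector of proper nonempty support, and Lemmas \ref{lem:normal}(1) and \ref{lem:support}b) give $C_p^n\le K$. If $|\tilde\Gamma|\le 2$, one must prove that $\tilde\Gamma.\Mon(g)$ splits (Lemma \ref{lem:snTp}, resp.\ computation). One must then determine the submodules of $\mathrm{Ind}_H^{\Mon(g)}\chi$ for a possibly nontrivial quadratic character $\chi$, which is not the permutation module.

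Several other steps are asserted rather than proved.
\begin{itemize}
\item A diagonal $K\cap C_p^n$ does not by itself give a Ritt move, even for $\Mon(g)\in\{A_n,S_n\}$, where you call the conclusion immediate. Table \ref{table:sporcases} contains direct products such as $C_3\times A_5$, $C_2\times\PSL_3(2)$ ($14T17$) and $C_2\times M_{11}$ with no Ritt move. Separating Case 2 from Case 3 therefore needs a criterion like Proposition \ref{prop:diagimplritt}: show that the point stabilizer of $g\circ h$ has a second nontrivial overgroup, using composition factors $C_p$ of a point stabilizer of $\Mon(g)$, or abelianizations when $\Mon(g)=S_n$ and $p=2$.
\item In the nonabelian case, the passage from a diagonal $K\cap N^n$ to a Ritt move via ``coincidence of ramification data'' is only a slogan. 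The paper imports this step from \cite[Corollary 4.4]{KNR24}.
\item The vectors coming from powers of branch cycles only bound $K$ from below; the kernel is not generated by these power relations. So ``checking which ramification types land in a proper submodule'' does not produce Table \ref{table:sporcases}. The paper instead computes the groups generated by lifted genus-$0$ tuples.
\item $C_3\times S_4$ is a Ritt-move case, namely $X^3\circ X(X^3+1)=X(X+1)^3\circ X^3$, so it is not in the table. For the solvable corollary you must also check that every other solvable Ritt move is monomial or Chebyshev.
\end{itemize}
Your worry in the reduction step is unfounded. $\Mon_{\overline{k}}(h)$ is a nontrivial normal subgroup of the primitive group $\Mon_k(h)$, which has a unique minimal normal subgroup, so the two socles coincide.
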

	Further lower bounds on the kernel,  strengthening the bounds provided by Theorem \ref{thm:mainres} on its socle, are given in Section \ref{sec:detailed_thms}. These refinements extend the applicability of the theorem. The techniques used to prove this theorem are diverse and include permutation group theory, representation theory and linear algebra, as well as some topological ideas related to braid group action and configuration spaces of Galois covers. Some parts are also based on computer calculations in Magma \cite{Magma}, mainly using the database of transitive groups. Code for the nontrivial Magma verifications is contained in an ancillary file available at \url{https://arxiv.org/src/2603.27609v1/anc/Comp2_MagmaChecks.txt}.
	
	%These are particularly relevant since they allow for applications going} 
%Applications of 
The implications of Theorem \ref{thm:mainres} and a fortiori of its refinements go 
far beyond  considerations of compositions of only {\it two} indecomposables. We demonstrate this with a ``large kernel" conclusion for the (geometric) monodromy groups of arbitrary iterates $f^n$ of polynomials $f$ with dihedral monodromy group $D_p$. Apart from showing that these groups have maximal possible $p$-Sylow group (away from obvious exceptional polynomials), we also obtain strong lower bounds on the $2$-Sylow groups, thereby yielding a lower bound on the Hausdorff dimension in the dynamical limit $n\to \infty$.
Recall that $\Mon(f^n)$ is a subgroup of the $n$-fold iterated wreath product $[G]^n$ of $G=\Mon(f)$, and set $[G]^\infty=\varprojlim_n [G]^n$. For profinite groups $G\le H\le [S_d]^\infty$, the {\it relative Hausdorff dimension}\footnote{Since the ratio of the logarithms is independent of the base, the definition is unambiguous; we take base $p$ in the calculations below.} of $G$ in $H$ is  
$$\mathcal{H}_H(G) :=\liminf_{n\to\infty} \dfrac{\log |\pi_n(G)|}{\log |\pi_n(H)|},$$
where $\pi_n(G)$ is the image of $G$ under projection to $[S_d]^n$. 
\begin{thm}
	\label{cor:dynnom}
	Let $f\in k[X]$ be a polynomial of odd prime degree $p$ such that $f$ is linearly equivalent over $\overline{k}$, but not conjugate\footnote{Here $f,g$ are called conjugate over $\overline{k}$ if $g=\mu\circ f\circ \mu^{-1}$ for a linear $\mu\in \overline{k}[X]$.} to $\pm T_p$ over $\overline{k}$.
	Then $G:=\Mon_{\overline{k}}(f^\infty):=\varprojlim_n \Mon_{\oline k}(f^n)$ contains $[C_p]^\infty$, the infinite iterated wreath product of groups $C_p$. Moreover, the relative Hausdorff dimension of $G$ in $H:=[D_p]^\infty$ is at least
	$$\mathcal{H}_H(G) \ge
	1-\frac{\log_p(2)}{p(1+\log_p(2))}.$$
\end{thm}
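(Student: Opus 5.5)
We argue by induction on $n$, using Theorem \ref{thm:mainres} at each level to control the kernel of $\Mon_{\overline{k}}(f^{n})\to \Mon_{\overline{k}}(f^{n-1})$. Over $\overline{k}$ we have $\Mon_{\overline{k}}(f)=D_p$ with socle $C_p$. The key input is that for every $n\ge 1$ the composition $f^{n}\circ f$ (equivalently $f\circ f^{n}$, after sorting out which factor plays the role of $h$) satisfies a large kernel conclusion. Since $f^n$ has length $n$, Theorem \ref{thm:mainres} does not literally apply for $n>1$, so we use the refinements of Section \ref{sec:detailed_thms}. The plan is to apply the two-factor statement to $g=f$, $h=f$ in a suitable relative form. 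Concretely, we write $\Mon(f^{n+1})\le \Mon(f)\wr \Mon(f^n)$ and study the kernel $K_n\le D_p^{p^n}$.

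\textbf{Step 1 ($p$-part).} We first show that $K_n\cap C_p^{p^n}$ is all of $C_p^{p^n}$. The cyclic large kernel alternative only gives $C_p^{p^n-1}$ inside the relevant block, so we need more. By Theorem \ref{thm:mainres}, $f\circ f$ has a large kernel unless case \ref{mainthm_xntn} or \ref{mainthm_ritt} holds. Case \ref{mainthm_exc} is excluded since $h\sim T_p$ and $p$ is odd, except possibly $T_3$; but that exception requires $\Mon(g)=\PSL_3(3)$, which does not occur here. Case \ref{mainthm_xntn}, namely $f\circ f\sim T_{p^2}$, must be ruled out using the hypothesis that $f$ is not conjugate to $\pm T_p$. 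If $f=\mu T_p\nu$ and $f\circ f\sim T_{p^2}$, then Ritt/Chebyshev uniqueness forces $\nu\mu$ to be $\pm X$ or to commute appropriately, which makes $f$ conjugate to $\pm T_p$. A Ritt move between two Chebyshev-type factors of the same prime degree is likewise only possible in this Chebyshev situation.

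To pass from the $C_p^{p^n-1}$ bound to the full group, we use the module structure. The kernel $K_n\cap C_p^{p^n}$ is an $\F_p[\Mon(f^n)]$-submodule of the permutation module. The orbit of each root of $f^n$ gives a block, and the complement of the sum-zero submodule is detected by the branch cycle at infinity. The inertia at infinity is a $p^n$-cycle acting on the $p^{n+1}$ roots, which is a product of a $p^n$-cycle in the wreath product. Its power $p^n$ lies in the kernel, and its coordinate product is a generator of $C_p$. Hence the kernel is not contained in the sum-zero submodule. Iterating over $n$ gives $[C_p]^n\le \Mon(f^n)$, and passing to the limit gives $[C_p]^\infty\le G$.

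\textbf{Step 2 ($2$-part and the Hausdorff bound).} We have $|\pi_n(H)|=(2p)^{(p^n-1)/(p-1)}$, while Step 1 gives $|\pi_n(G)|\ge p^{(p^n-1)/(p-1)}\cdot 2^{e_n}$, where $e_n$ is the $2$-rank contributed by the kernels. The claimed bound $1-\frac{\log_p 2}{p(1+\log_p 2)}$ corresponds to losing at most a $1/p$ fraction of the $2$-parts. In other words, we need the kernel at level $n$ to contain an elementary abelian $2$-group of rank at least $p^{n-1}(p-1)$ out of $p^{n}$.

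We plan to obtain this as follows. The quotient $K_n/C_p^{p^n}\le C_2^{p^n}$ is an $\F_2$-submodule, invariant under the previous monodromy group and in particular under $[C_p]^n$ acting on the $p^n$ coordinates. We then use the branch cycles at the finite branch points, which are involutions (products of $(p-1)/2$ transpositions in $D_p$), lifted through $f^n$. These give elements of the kernel with nonzero $C_2$-components exactly on fibres over ramification points, and their $[C_p]^n$-translates span the module. Since the irreducible $\F_2[C_p]$-modules have the trivial module as the only one of dimension $1$, the invariant submodule generated is at least the sum-zero part within each $p$-block at the top level. This gives codimension at most $p^{n-1}$, and summing the resulting logs yields the stated liminf.

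\textbf{Main obstacle.} The hardest step is this $2$-part spanning argument. We must show that the involution lifts genuinely generate the augmentation-type submodule in every $p$-block rather than something smaller. This uses that $f$ is not conjugate to $\pm T_p$: otherwise the critical values are periodic and the lifts collapse.
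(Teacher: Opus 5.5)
Your proposal has a genuine gap in both steps.

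\textbf{Step 1.} There is no ``suitable relative form'' of Theorem \ref{thm:mainres}. That theorem and every refinement in Section \ref{sec:detailed_thms} concern $g\circ h$ with \emph{both} factors indecomposable. So for $n\ge 2$ nothing there bounds $\ker(\Mon(f^{n+1})\to\Mon(f^n))$ from below; producing such a bound is exactly the content of Theorem \ref{thm:comp_dihedral}.

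Your upgrade from codimension one to all of $C_p^{p^n}$ is also false. The $p^n$-th power of the $p^{n+1}$-cycle at infinity is a diagonal element $(c,\dots,c)$, with $c$ a generator of $C_p$. But it has $p^n\equiv 0 \bmod p$ coordinates, so its coordinate product is $c^{p^n}=1$ and it lies in the sum-zero subgroup. Combining the diagonal from Lemma \ref{lem:fullcyindiag} with an augmentation-type subgroup only works when the number of blocks is prime to $p$. Moreover, $C_p^{p^n}$ is not the permutation module here: block stabilizers act on their block through $D_p$, i.e.\ with inversions, so the sum-zero subgroup is not even invariant.

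\textbf{Step 2.} This is an assertion rather than an argument. Taking $[C_p]^n$-translates of involution lifts gives no control over supports. Nothing you say produces vectors supported inside a single $p$-block, which is what codimension $p^{n-1}$ requires.

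\textbf{The missing idea and the paper's route.} What is missing is the support-shrinking commutator argument of Lemma \ref{lem:fullsocle}, used in the opposite order to yours. The paper applies Theorem \ref{thm:comp_dihedral} with all $f_i=f$ and argues by induction. The hypothesis on $f$ enters only in the base case, through Theorem \ref{thm:affinesho} applied to $f\circ f\not\sim T_{p^2}$.

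At each level the paper first obtains the $2$-part:
\begin{itemize}
\item Modulo the $p$-part, $\Mon(f^{n+1})$ becomes $\Mon(f^n\circ\varphi)$. Here $\varphi$ is the quadratic rational function parametrizing the quadratic subfield of the Galois closure of $f(X)-t$.
\item This group sits in $H\wr\Mon(f^{n-1})$ with $H=\Mon(f\circ\varphi)$. The image of the block kernel in each copy of $H$ contains $\mathrm{Aug}(\F_2^p).C_p$.
\item $\mathrm{Aug}(\F_2^p)$ is a sum of faithful irreducible $\F_2[C_p]$-modules.
\item The induction hypothesis $C_p^{p^{n-1}}\le \ker(\Mon(f^n)\to\Mon(f^{n-1}))$ supplies elements acting nontrivially on just one block.
\item Lemma \ref{lem:fullsocle} then yields $\mathrm{Aug}(\F_2^p)^{p^{n-1}}$, i.e.\ $2$-rank $p^{n-1}(p-1)$.
\end{itemize}
Only afterwards is the full $p$-part obtained. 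Lemma \ref{lem:fullsocle} is applied again, now with $W=C_p$ viewed as a faithful $\F_p[C_2]$-module via inversion, using two elements of $\mathrm{Aug}(\F_2^p)$ whose supports meet in exactly one point.

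So the $p$-part at level $n$ comes from the $2$-part at level $n$, which in turn comes from the $p$-part at level $n-1$. Once these containments are established, your order count and Hausdorff computation agree with the paper's.
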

The proof is given in Section \ref{sec:dyn_appl}. Further applications are expected in the field of arithmetic dynamics, and notably the study of {\it dynamical Galois groups}, i.e., the groups $G_{f,\alpha}:=\varprojlim_n \textrm{Gal}(f^n(X)-\alpha/k)$,
for $f\in k[X]$ and $\alpha\in k$. 
For $\alpha\in k$ which is not a critical value of iterates of $f$, these groups can be viewed as subgroups of $\Mon_k(f^\infty)=\varprojlim_n\Mon_k(f^n)$ and tend to be of small index in it. For $p=3$, this relation was  in particular studied in \cite{BT19, Ben} and \cite[Thm.\ 4]{women}, but much more remains to be explored.  

In \cite{BKN25}, the authors also apply (the restriction to solvable groups of) Theorem \ref{thm:mainres} to solve a long-standing open problem due to Davenport, Lewis and Schinzel \cite{DLS61} on the reducibility behavior of $f(X)-g(Y)$, with {\it arbitrary} polynomials $f,g$.

{\bf Acknowledgements}: The first and third authors were supported by the Israel Science Foundation, grant no.~353/21. The first author is also grateful for the support of a Technion fellowship, and an Open University of
Israel post-doctoral fellowship. He also acknowledges the support of the CDP C2EMPI,
as well as the French State under the France-2030 programme, the University of Lille, the Initiative of Excellence of the University of Lille, the European Metropolis of Lille for their
funding and support of the R-CDP-24-004-C2EMPI project. The second-named author was supported by the National
Research Foundation of Korea (NRF Basic Research Grant RS-2023-00239917).

\section{Beyond Theorem \ref{thm:mainres}}
\label{sec:detailed_thms}
In this section, we provide detailed refinements of Theorem \ref{thm:mainres}. The theorem follows directly from the combination of Theorems \ref{thm:affinesho}, \ref{thm:affS4I}, \ref{thm:affS4II}, \ref{thm:S4S4}, \ref{thm:arbnons}, \ref{thm:nonsaffI} and \ref{thm:nonsS4I} below, each of which deals with particular classes of monodromy groups $\Mon_k(g)$ and $\Mon_k(h)$. The possible classes are described by the monodromy classification for indecomposable polynomials, see Theorem \ref{prop:primmon}. Note that, due to the definition of the ``large kernel" property, we may work at the level of geometric monodromy groups in order to prove Theorem \ref{thm:mainres}. We therefore may and will assume for the rest of this section that $k=\overline{k}$ is an algebraically closed field of characteristic $0$.

\subsection{Solvable case} This part is devoted to the refinements of the solvable case of Theorem \ref{thm:mainres}. Note that Theorems \ref{thm:affinesho}, \ref{thm:affS4I}, \ref{thm:affS4II} and \ref{thm:S4S4} are essentially statements about configurations of branch points in a composition of two polynomials, as will become evident in the proofs. We additionally give explicit polynomials corresponding to each special case, whenever these are easy to compute from the respective branch point configurations; in some cases, however, such a parameterization would be too inconvenient to produce.

For a group $H$ and integer $n \geq 1$, let
$
{\rm diag}(H^n)=\{(h,\dots,h) \mid h \in H \}.
$
We shall call a polynomial $f$ an $\AGL_1$-polynomial (resp.\ nonsolvable polynomial, $S_4$-polynomial) if its monodromy group is a transitive subgroup of $\AGL_1(p)$ for a prime $p$ (resp.\ is nonsolvable, is $S_4$).  The {\it ramification type $E_f(P)$ of  $f$ over $P$}  is the tuple  of  ramification indices $e_f(Q/P)$ in decreasing order, where $Q$ runs over preimages in $f^{-1}(P)$. The ramification type of $f$ is then the multiset of tuples $E_f(P)$, where $P$ runs over branch points of $f$, cf.\ Section \ref{sec:setup}. 

\begin{thm}[Composition of ${\AGL}_1$-polynomials]\label{thm:affinesho}
	Let $g$ and $h$ be ${ \AGL}_1$-polynomials of prime degrees $q$ and $p$, respectively.
	Let $\Gamma={\ker}(\mon_k(g \circ h) \rightarrow \mon_k(g))$.
	\begin{enumerate}[label=\textbf{\arabic*.}, ref=\arabic*]
		\item Suppose $h\sim X^p$. Then $\Gamma=C_p^q$, unless one of the following holds:
		\begin{enumerate}[label=\textbf{\alph*.}, ref=\alph*]
			\item $g \circ h\sim X^{pq}$ -- in which case $\Gamma={\rm diag}(C_p^q)$.
			\item $p=2$ and $g \circ h\sim T_{2q}$ -- in which case $\Gamma={\rm diag}(C_2^q)$.
		\end{enumerate}
		\item Suppose $h\sim T_p$ for $p \geq 3$. Then $\Gamma=D_p^q$ or $\Gamma=\{ (a_k)_{k=1}^q \in D_p^q \mid a_1 \cdots a_q \in C_p \}$, unless $g \circ h\sim T_{pq}$ -- in which case 
		$$
		\Gamma=\begin{cases}
			{\rm diag}(C_p^q) &\textrm{if }q\neq 2,\\
			{\rm diag}(D_p^2) &\textrm{if }q=2.
		\end{cases}
		$$
	\end{enumerate}
\end{thm}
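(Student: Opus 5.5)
Since $k=\overline{k}$, we work entirely with branch cycles. By Müller's classification (Theorem \ref{prop:primmon}), an $\AGL_1$-polynomial of prime degree is linearly equivalent to a monomial or to a Chebyshev polynomial. So we may write $g=\mu\circ g_0\circ\nu$ and $h=\nu'\circ h_0$ with $g_0\in\{X^q,T_q\}$ and $h_0\in\{X^p,T_p\}$. Absorbing linear maps, $g\circ h\sim g_0\circ(h_0+c)$ up to an affine change in the middle, i.e.\ $g\circ h\sim g_0\circ \lambda\circ h_0$ for some linear $\lambda$. Then $G=\Mon(g\circ h)\le M\wr \Mon(g)$, where $M=\Mon(h)\in\{C_p,D_p\}$, and $\Gamma=G\cap M^q$ is normal in $G$.

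The first step is to generate elements of $\Gamma$. Take a branch cycle $\sigma_P$ over a finite branch point $P$ of $g\circ h$ with $P$ not a branch point of $g$. Then $\sigma_P\in\Gamma$, and it acts inside the blocks lying over the points $Q\in g^{-1}(P)$ that are branch points of $h$. In particular, if some finite branch point $Q$ of $h$ satisfies $g(Q)\notin\{\text{branch points of }g\}$ and $g^{-1}(g(Q))$ contains exactly one branch point of $h$, then $\Gamma$ contains an element supported on a single block. That element is a $p$-cycle when $h\sim X^p$, and an involution or $p$-cycle when $h\sim T_p$ (take the branch cycles over both critical values $\pm2$ of $T_p$, which give reflections, and their product, which has order $p$). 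Since $G$ permutes the blocks transitively, the $G$-normal closure gives $C_p^q\le\Gamma$, and in the dihedral case it also gives all single-block reflections, hence $\Gamma=D_p^q$.

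This reduces everything to the degenerate configurations, where the branch point(s) of $h$ map under $g$ to branch points of $g$ (the critical value $0$ of $X^q$, or $\pm2$ of $T_q$), or where two critical values of $h$ lie in a common fibre of $g$. There are only finitely many such configurations up to linear equivalence, since the relevant parameter $\lambda$ is pinned down. In each of them we write down the branch cycles explicitly as elements of $M\wr \Mon(g)$ and compute $\Gamma$ through the $\F_p[\Mon(g)]$-module $\Gamma\cap C_p^q\le \F_p^q$. When $M=D_p$, we also use the $\F_2[\Mon(g)]$-module given by the image of $\Gamma$ in $C_2^q$. Here $\Mon(g)\in\{C_q,D_q\}$ acts on $\F_p^q$ by permuting coordinates, twisted by the $M$-components, which act on $C_p$ by $\pm1$ in the dihedral case.

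The submodules of these permutation modules are determined by augmentation. The diagonal $\mathrm{diag}(C_p^q)$ and the sum-zero hyperplane are the only proper nontrivial ones when $p\ne q$ and the action is irreducible enough. When $p=q$ the module $\F_p[C_p]$ is uniserial, and we must check separately that $\Gamma$ is not a deeper term of the Loewy series. Next we use the product relation: the product of finite branch cycles equals the inverse of a $pq$-cycle. Comparing coordinate products in $M$ of that $pq$-cycle with the contributions of the kernel elements forces $\Gamma$ to be all of $M^q$, or its index-$2$ subgroup $\{a_1\cdots a_q\in C_p\}$, or a diagonal.

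Finally we identify exactly when the diagonal occurs. In that case $G$ has order $pq\cdot|\Mon(g)|/q$ or similar, and a Riemann–Hurwitz count, or directly the shape of the branch cycles, shows $g\circ h$ has the ramification of $X^{pq}$ or $T_{pq}$. The special case $p=2$, $h=X^2$ gives $T_{2q}=T_q\circ T_2$ with $T_2\sim X^2$. For $q=2$ the extra symmetry yields $\mathrm{diag}(D_p^2)$. The main obstacle will be the degenerate case $p=q$ with $g(\text{crit.\ value of }h)$ a branch point of $g$. There the kernel elements come only from products and conjugates of the branch cycles of $g$, and the modular structure makes excluding intermediate submodules delicate. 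I would first try to show that a commutator of two lifted branch cycles lies outside the augmentation-square term.
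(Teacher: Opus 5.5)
Your first reduction is correct and is exactly how the paper begins: an inertia generator over a non-branch point of $g$ whose fibre contains only one critical value of $h$ is supported on a single block, and transitivity finishes. One small fix: in that situation the two reflections over $\lambda(\pm 2)$ lie in \emph{different} blocks, so their product does not have order $p$. Instead, conjugate the single-block reflection by an element of $\Gamma$ acting as a rotation on that block, e.g.\ $\sigma_\infty^{q}$.

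The gap is in what follows. Your claim that the diagonal and the augmentation are the only proper nontrivial submodules is false in general. The module $\F_p[X]/(X^q-1)$ splits along the irreducible factors of $(X^q-1)/(X-1)$ mod $p$. For $p=2$, $q=7$ the Hamming codes are $C_7$-invariant, and for $p=2$, $q=17$ (where $2^4\equiv -1 \bmod 17$) there are $D_{17}$-invariant submodules of dimension $8$. So an argument through the submodule lattice cannot work.

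The missing idea is that the \emph{specific} short vectors produced by the degenerate configurations generate large submodules however $X^q-1$ factors. The vector ${\bf e}_j+{\bf e}_{j+i}$ arises from squaring the inertia generator at $\pm 2$ when the critical value of $h$ is a ramification point of $T_q$, or from two critical values of $h$ in one fibre. It generates the cyclic code with generator $\gcd(X^q-1,X^i+1)$, which is $1$ for $p$ odd and $X+1$ for $p=2$. A reflection-invariant support-$4$ vector in $\F_2^q$ generates ${\rm Aug}(\F_2^q)$, because $\gcd(X^q-1,(X^i-1)(X^j-1))=X-1$. Adding the nontrivial diagonal element $\sigma_\infty^q$ (Lemma~\ref{lem:fullcyindiag}) settles $h\sim X^p$. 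In particular $p=q$ is no obstacle at all. For $p$ odd, $X^i+1$ does not vanish at $X=1$, so ${\bf e}_j+{\bf e}_{j+i}$ is a unit in $\F_p[X]/(X-1)^p$, and no Loewy-series analysis is needed.

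For $h\sim T_p$ a second ingredient is missing. Here $\Gamma\cap C_p^q$ is not an $\F_p[\Mon(g)]$-module, since reflections in $\Gamma$ invert their coordinates. Moreover, in the degenerate configurations the finite inertia generators and their powers give you only involutions. The only $C_p$-elements you have for free come from $\sigma_\infty^q$, which has full support.

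The paper handles this in two steps. It first works in $\Gamma/(\Gamma\cap C_p^q)\le C_2^q$ and obtains ${\rm Aug}(C_2^q)$ from vectors of support $1$, $2$, or reflection-invariant of support $4$, as above. It then lifts to $C_p^q\le\Gamma$ by a commutator argument (Lemmas~\ref{lem:normal} and~\ref{lem:support}): take elements of order prime to $p$ whose supports meet in a single block, and commute them with an element having a nontrivial rotation in that block; this yields a single-block rotation. Your ``comparing coordinate products'' and Riemann--Hurwitz steps do not supply this, so the plan as written does not reach $C_p^q\le\Gamma$ in the dihedral case.

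Also, the degenerate configurations are not finitely many up to linear equivalence: for example, $T_q(\lambda(2))=T_q(\lambda(-2))$ is a one-parameter family. Only their combinatorial types are finite in number.
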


For the next statements, recall that there are two types of polynomials with monodromy group $S_4$: the {\it generic type}, of ramification type $([2,1^2], [2,1^2], [2,1^2], [4])$, and the {\it special type}, of ramification type $([3,1], [2,1^2], [4])$; see Theorem \ref{prop:primmon}. Following \cite{MZ09}, we say that  $\alpha \in k$ is  a \emph{special point} of   $f$ if it is unramified but lies over a branch point.

\begin{thm}[Composition of an $S_4$-polynomial and an ${ \AGL}_1$-polynomial]\label{thm:affS4I}
	Let $g \in k[X]$ be a polynomial with monodromy group $S_4$, and let $h$ be an $\AGL_1$-polynomial of prime degree $p$. Let $\Gamma={\ker}({\rm Mon}_k (g \circ h) \rightarrow {\rm Mon}_k(g))$. Then the following hold:
	\begin{enumerate}[label=\textbf{\arabic*.}, ref=\arabic*]
		\item If $g\circ h\sim (X^3+1)^3 X^3 = X^3(X-1)\circ (X^3+1)$, then $\Mon_k(g\circ h) = S_4\times C_3$. 
		\label{agls4_1_exc1}
		\item If $g\circ h\sim X^3(X-1)\circ (X^2+b)$, where $b$ is a root of $X^2+\frac{1}{2}X+\frac{3}{16}$, then $\Mon_k(g\circ h) = { \GL}_2(3)$ (central extension of $S_4$ by $C_2$).
		\label{agls4_1_exc2}
		\item In all other cases, $\Gamma\cap C_p^4$ has dimension at least $3$.
		\label{agls4_1_nonexc}
		Moreover, one has $\Gamma\supseteq C_p^4$ unless:
		\begin{enumerate}[label=\textbf{\alph*.}, ref=\alph*]
			\item $g\circ h\sim (X^2+\frac{3}{4})^3(X^2-\frac{1}{4}) = X^3(X-1) \circ (X^2+\frac{3}{4})$.
			\label{agls4_1_nonexc1}
			\item $g$ is special, $h\sim T_p$,
			and the two finite branch points of $h$ are both the preimages under $g$ of the finite branch point of ramification type $[3,1]$. In other words, $g\circ h \sim (X-2)^3(X+2)\circ T_p$.
			%% Second case is unnecessary, since equal to first case evaluated at -X (due to T_p(-X)=-T_p(X))
			\label{agls4_1_nonexc2}
			\item $h\sim T_p$, and the two finite branch points
			of $h$ are both the special points of $g$ lying over the same finite branch point of ramification type $[2,1^2]$. In other words,  $g\circ h \sim (X-a)^2(X^2-4)  \circ T_p$ for some $a\notin\{-2,0,2\}$.
			\label{agls4_1_nonexc3}
		\end{enumerate}
	\end{enumerate}
\end{thm}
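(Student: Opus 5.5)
Throughout, $k$ is algebraically closed. The plan is to reduce the structure of $\Gamma$ to a problem about inertia generators and then read everything off the branch point configuration. Write $G=\Mon_k(g\circ h)\le \Mon(h)\wr S_4$, acting on the $4p$ roots grouped into four blocks indexed by the roots $\beta_1,\dots,\beta_4$ of $g(X)=t$. By Riemann's existence theorem, $G$ is generated by inertia generators $\sigma_P$, one for each branch point $P$ of $g\circ h$, with product $1$. Each $\sigma_P$ acts on the blocks through the inertia of $g$ at $P$. Its ``block components'' are determined by the ramification of $h$: on a cycle of length $e$ of $\sigma_P$ on blocks, the $e$-th power of $\sigma_P$ acts on a block as an inertia element of $h$ over the corresponding point $Q\in g^{-1}(P)$. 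This element is trivial if $Q$ is not a branch point of $h$.

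$\Gamma$ is normal in $G$ and contains $\Gamma\cap\soc(\Mon(h))^4=\Gamma\cap C_p^4$. This intersection is an $\F_p[S_4]$-submodule of the permutation module $\F_p^4$ (twisted by the $C_{p-1}$-part when $h$ is not a monomial).

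The first step bounds $\Gamma\cap C_p^4$ from below by producing enough elements in it. For each finite branch point $Q$ of $h$ with $g(Q)=P$, I will take a suitable power of $\sigma_P$ when $Q$ lies in a cycle of length $e$. If $e$ is coprime to $p$, the power $\sigma_P^{e\cdot m}$, with $m$ chosen to kill the $C_{p-1}$ part, gives an element supported on the blocks of that cycle. Commutators with conjugates under $S_4$ then give nonzero vectors in $\F_p^4$. The submodules of $\F_p^4$ under $S_4$ are $0$, the diagonal line, the augmentation module of dimension $3$, and everything, with small modifications for $p=2,3$. So it suffices to show two things: first, that $\Gamma\cap C_p^4$ is not contained in the diagonal, which gives dimension $\ge 3$; second, that it is not equal to the augmentation module, which gives all of $C_p^4$.

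The second step is a case analysis. The inputs are the generic or special type of $g$ and the position of the one finite branch point (if $h\sim X^p$) or two finite branch points (if $h\sim T_p$) of $h$ relative to the fibres of $g$ over its branch points. For a ``generic position'' branch point $Q$ of $h$, meaning an unramified point over a non-branch point of $g$, I get an element of $C_p^4$ supported on a single block. Since $S_4$ is transitive, this gives all of $C_p^4$. Otherwise $Q$ lies in $g^{-1}(P)$ for a branch point $P$, and I compute the product-one relation. The projection of $\prod\sigma_P$ to $C_p^4$ modulo the augmentation module is a sum of ``weights'' determined by the $h$-inertia placed into each block. This relation is the obstruction for being stuck inside the augmentation module, or forcing the diagonal.

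Finally, I will enumerate the configurations in which every branch point of $h$ lies over a branch point of $g$ in a cycle of length $e$ with $p\mid e$, or in which symmetric placement forces cancellation. These are: $p=3$ over the $[3,1]$ point; $p=2$ with $e\in\{2,4\}$; and both $T_p$ branch points placed symmetrically in one fibre. In each such configuration I either realize the explicit polynomial of items 1, 2, 3a–c, or show a contradiction via Riemann–Hurwitz genus counts. The exceptional groups $S_4\times C_3$ and $\GL_2(3)$ in items 1 and 2 I would confirm by computing the group generated by explicit permutation triples, for instance in Magma.

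I expect the main obstacle to be the small primes $p=2,3$, where the permutation module $\F_p^4$ is not semisimple and the diagonal meets the augmentation module, together with the special-type $g$ over the $[3,1]$ and $[4]$ points. There one must track exact elements rather than module dimensions. I would handle these by writing explicit branch cycle tuples for each placement and computing the generated group directly. The coefficient conditions, such as $b^2+\tfrac12 b+\tfrac3{16}=0$, would come from requiring that the critical values of $g\circ h$ coincide.
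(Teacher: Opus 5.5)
\textbf{Verdict: genuine gap for $h\sim T_p$.} Your handling of $h\sim X^p$ is essentially the paper's: short-support powers of inertia generators, the diagonal coming from $\sigma_\infty^4$, and a finite computation for $p=2,3$. The gap is in the case $h\sim T_p$, which is an infinite family in $p$.

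\textbf{What fails.} The finite inertia generators of $T_p$ are reflections, so ``taking $\sigma_P^{em}$ and killing the $C_{p-1}$-part'' gives the identity. There are three configurations in which no power of any inertia generator yields a short-support element of $\Gamma$:
\begin{itemize}
\item[(i)] case 3b;
\item[(ii)] case 3c;
\item[(iii)] $g$ generic, with $\pm 2$ special points over two \emph{different} $[2,1^2]$ branch points.
\end{itemize}
Your list of hard cases misses (iii) entirely. In (i) and (ii) the theorem still asserts $\dim(\Gamma\cap C_p^4)\ge 3$ for every odd prime $p$. ``Realize the explicit polynomial or get a Riemann--Hurwitz contradiction'' does not prove this: these configurations exist, and no computer check covers all $p$.

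A further problem is your reduction ``not diagonal $\Rightarrow$ dimension $\ge 3$, not augmentation $\Rightarrow$ everything''. It uses the $S_4$-submodule lattice of $\F_p^4$. For $h\sim T_p$, however, $\Gamma\cap C_p^4$ is a module for $\Mon(g\circ h)/(\Gamma\cap C_p^4)\le C_2\wr S_4$ acting by signed permutations. That lattice can be different, for example for a $\GL_2(3)$-type quotient.

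\textbf{How the paper closes the gap.} It uses two ideas absent from your sketch.
\begin{itemize}
\item[(a)] For 3b and 3c it applies Lemma \ref{lem:snTp}. Splitting of $C_2^k.A_4$ (apart from $\SL_2(3)$) together with Gasch\"utz's theorem shows that a one-dimensional $p$-part forces either $\Mon(g\circ h)\hookrightarrow D_p\times S_4$ or a $\GL_2(3)$ quotient. The first gives a Ritt move via Proposition \ref{prop:diagimplritt}, which Ritt's theorems forbid for $T_p$ composed with an $S_4$-polynomial. The second is excluded because the $4p$-cycle at $\infty$ has order $4$ modulo $\Gamma\cap C_p^4$.
\item[(b)] For configuration (iii) it uses the braid action to normalize the $S_4$-tuple to $(1,2,3,4),(1,2),(1,3),(1,4)$. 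It then coalesces two branch points to obtain a special-type sub-monodromy group in which a $T_p$ branch point lies over the $[3,1]$ point, and there a short-support element exists.
\end{itemize}

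\textbf{A repair within your commutator framework.}
\begin{itemize}
\item[1.] Conjugate so that $\sigma_\infty=((1,1,1,\rho),(1\,2\,3\,4))$. Then $x=\sigma_\infty^4$ is the constant rotation vector.
\item[2.] Suppose a branch cycle $\sigma$ has a non-constant vector of component signs $(\epsilon_i)\in\{\pm1\}^4$ (reflection versus rotation in each component). Then $\sigma x\sigma^{-1}x^{-1}$ corresponds to $(\epsilon_i-1)_i$, a non-diagonal element of $\Gamma\cap C_p^4$.
\item[3.] Read the product-one relation through the sign character of the constant-sign subgroup $\{\pm1\}\times S_4\le C_2\wr S_4$. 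This shows that not all branch cycles can have constant sign vectors: in 3b and 3c exactly one would have sign $-1$.
\item[4.] This gives $\dim\ge 3$, and all of $C_p^4$ when the support of $(\epsilon_i-1)_i$ is odd, as in configuration (iii).
\end{itemize}
Your ``weights modulo augmentation'' obstruction is the right kind of idea. For $T_p$, however, it must live in the $C_2$-quotient, since for odd $p$ there is no homomorphism $D_p\wr S_4\to C_p$ playing the role of the sum of components.
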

For any $n \geq 2$ and $m \geq 2$, we let the \emph{augmentation subgroup} of $C_m^n$ be
$$
{\rm Aug}(C_m^n)=\{(a_1,\dots,a_n) \in C_m^n \mid a_1 \cdots a_n=1\}.
$$ 

\begin{thm}[Composition of an ${\AGL}_1$-polynomial and an $S_4$-polynomial]\label{thm:affS4II}
	Let $g$ be an $\AGL_1$-polynomial of prime degree $p$, and let $h \in k[X]$ be a polynomial with monodromy group $S_4$. Let $\Gamma={\ker}({\rm Mon}_k (g \circ h) \rightarrow {\rm Mon}_k(g))$. Then the following hold:
	\begin{enumerate}[label=\textbf{\arabic*.}, ref=\arabic*]
		\item If $g\circ h\sim (X^3+1)^3X^3 = X^3\circ ((X^3+1)X)$, then $\Mon_k(g\circ h) = S_4\times C_3$.
		\label{agls4_2_exc}
		\item In all other cases, $\Gamma$ contains $V_4^p$. 
		\label{agls4_2_nonexc} Moreover, $\Gamma=S_4^p$
		unless one of the following holds:
		\begin{enumerate}[label=\textbf{\alph*.}, ref=\alph*]
			\item \label{agls4_2_nonexc1} $g\sim X^p$,
			$h$ is special, and the finite branch point of $h$ of ramification type $[2,1^2]$ is the finite ramification point of $g$. In other words, $g\circ h \sim (X^3(X-4)+27)^p$. In this case, $\Gamma \supseteq A_4^p$.
			\item \label{agls4_2_nonexc2} $g\sim T_p$, $h$ is special, and the branch point of $h$ of ramification type $[2,1^2]$ is one of the two special points of $g$.
			In other words, $g\circ h \sim T_p\circ (aX\pm 2) \circ (X^3(X-4)+27)$ for some $a\ne 0$. In this case, $\Gamma/(\Gamma\cap V_4^p)\geq \mathrm{Aug}(C_3^p)$.
			\item $g \sim   X^p$ ($p \geq 5$), $h$ is generic, and the three finite branch points of $h$ map to the same point under $g$. In this case, one has $\Gamma \supseteq A_4^p$.
			\label{agls4_2_nonexc3}
			\item $g \sim T_p$, $h$ is generic, and the three finite branch points of $h$ are all ramified points of $g$ lying over the same branch point of $g$. In this case, one has $\Gamma \supseteq A_4^p$.
			\label{agls4_2_nonexc4}
			\item $g\sim T_3$, $h$ is generic, and $g$ maps the three branch points of $h$ all to the same non-branch point of $g$. Here $\Gamma/(\Gamma\cap V_4^3)$ contains $\mathrm{Aug}(C_3^3)$.
			\label{agls4_2_nonexc5}
			\item $g = \mu\circ  X^2\circ \nu$ for linear $\mu,\nu$, $h$ is generic, and the three finite branch points $u,v,w$ of $\nu\circ h$
			satisfy $u=0$ and $v^2=w^2$. In other words, $g\circ h \sim (X^2(X^2 + (\omega + 3)X + \frac{9}{8}\omega + \frac{27}{8}))^2$, where $\omega=\pm \sqrt{3}$. In this case, one has $\Gamma\supseteq A_4^2$.
			\label{agls4_2_nonexc6}
		\end{enumerate}
	\end{enumerate}
\end{thm}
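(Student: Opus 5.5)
We would prove Theorem \ref{thm:affS4II} by working in the Riemann existence setting: $\Mon_k(g\circ h)$ is generated by branch cycles attached to the branch points of $g\circ h$, with product $1$ and the one at infinity being a $4p$-cycle. The kernel $\Gamma\le S_4^p$ is the normal closure in $\Mon_k(g\circ h)$ of the elements $\sigma^{e}$, where $\sigma$ runs over the branch cycles and $e$ is the order of the image of $\sigma$ in $\Mon_k(g)\le \AGL_1(p)$. Alternatively, $\Gamma$ is generated by the branch cycles of the Galois closure of $g\circ h$ over the cover defined by $g$. Concretely, each finite branch point $b$ of $h$ lying over a point $c=g(b)$ contributes elements of $\Gamma$. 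Let $e_g(b)$ be the ramification index of $g$ at $b$. Such an element has in the coordinate $b$ a cycle of $h$-type (a transposition or a $3$-cycle or a $4$-cycle), and it also has conjugates spread over the coordinates of the other points in $g^{-1}(c)$.

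The first step is the reduction to the socle. We project $\Gamma$ to $(S_4/V_4)^p=S_3^p$, and then further to $C_2^p$ via the sign map. For $\Gamma\supseteq V_4^p$ it suffices to exhibit one element of $\Gamma$ whose projection to some coordinate is a non-$V_4$ element acting nontrivially on $V_4$. The reason is that $\Mon(g)$ is transitive on the coordinates, and the $S_4$-conjugation closure of a nontrivial element of $V_4$, restricted to that single coordinate, generates $V_4$ there. More precisely, we use the following standard lemma. Let $N$ be normal in a subgroup $W$ of $A\wr C$ that projects onto $A$ in one coordinate, and suppose the commutator $[N,\soc(A)^p]$ is nontrivial in one coordinate. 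Then $N\supseteq \soc(A)^p$, since $\soc(S_4)=V_4$ is the unique minimal normal subgroup and $V_4$ is abelian but irreducible under $S_3$.

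The required element is obtained from a branch point $b$ of $h$ that is unramified for $g$. The branch cycle of $g\circ h$ over $g(b)$, raised to the order of its image in $C_p\rtimes C_{p-1}$, is supported on few coordinates with nontrivial $S_4$-cycles there. Its commutators with conjugates inside $\Gamma$ then produce an element that lies in $V_4$ on one coordinate and is trivial elsewhere. If every finite branch point of $h$ is a ramification point of $g$, then $g$ has at most two finite branch points (it is $X^p$ or $T_p$) and the configurations are very restricted. This is exactly the source of case \ref{agls4_2_exc}, where $\Gamma$ is only $C_3$-sized, and the Magma-checked group $S_4\times C_3$ appears. In that case we argue directly with the explicit branch cycles.

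The second step computes $\bar\Gamma=\Gamma V_4^p/V_4^p\le S_3^p$. This becomes a linear algebra problem over $\F_2$ and $\F_3$: the image in $C_2^p$ and the image of $\bar\Gamma\cap C_3^p$ in $C_3^p$ are $\F_\ell[\Mon(g)]$-submodules of the permutation module. Over $\F_\ell$ with $\ell\ne p$, the permutation module of $C_p$ decomposes as the trivial module, the augmentation module, and further irreducibles given by cyclotomic factors. A submodule is the whole module as soon as it contains a vector with coordinate sum nonzero together with a vector generating the augmentation part. We read off such vectors from the local contributions of each branch point.

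\begin{itemize}
\item A $[2,1^2]$-branch point $b$ of $h$ that is unramified in $g$ gives a weight vector $e_b$, but if $g(b)$ is also hit by other branch points of $h$ one obtains sums $\sum e_{b'}$. Coincidences $g(u)=g(v)=g(w)$ (case \ref{agls4_2_nonexc3}), or $u,v,w$ lying in one fibre of ramified points of $T_p$ (case \ref{agls4_2_nonexc4}), force these vectors into a proper submodule. In these cases we get $A_4^p$ (the sign images fail) but not $S_4^p$.
\item For special $h$, the $[3,1]$ point gives $C_3$-data and the $[2,1^2]$ point gives $C_2$-data. Placing the latter at the ramification point of $X^p$ or at a special point of $T_p$ degenerates the module, yielding cases \ref{agls4_2_nonexc1}, \ref{agls4_2_nonexc2} and \ref{agls4_2_nonexc5}, with the augmentation bound.
\end{itemize}

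For $p=2$ the module is tiny, and the residual coincidence $u=0$, $v^2=w^2$ gives case \ref{agls4_2_nonexc6}. We would verify this, and the explicit polynomial forms, by solving the branch-point equations.

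The main obstacle is the exhaustive enumeration of branch-point coincidences between the at most three finite branch points of $h$ and the at most two finite branch points of $g$ (together with special points). One must show that in every configuration not listed, the resulting vectors generate the full module. I would organise this enumeration by the number of distinct values $g(b)$ and handle the small primes $p\in\{2,3\}$ separately, possibly confirming them with Magma on the transitive groups of degree $4p$.
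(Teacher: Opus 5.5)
\textbf{There is a genuine gap, starting in your first step.} The ``standard lemma'' you invoke is false. In Case 1 one has $\Mon(g\circ h)=S_4\times C_3$, so $\Gamma$ is a diagonally embedded $S_4\le S_4^3$. It contains elements, for instance images of $3$-cycles, that act nontrivially on $V_4$ in every coordinate, and $[\Gamma,V_4^3]\ne 1$. Yet $\Gamma\cap V_4^3$ is only the diagonal $V_4$. So ``one element acting nontrivially on $V_4$ in some coordinate'' does not give $V_4^p$.

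The commutator trick produces an element supported on a single block only if you control supports. You need elements $x_1,\dots,x_s\in\Gamma$ whose supports meet in exactly one block (Lemmas~\ref{lem:fullsocle}, \ref{lem:normal}, \ref{lem:support}). This support condition is exactly what generates the exceptional cases, and because your criterion ignores it you misidentify Case 1. It does not come from branch points of $h$ being ramified for $g$, and $\Gamma$ there is not $C_3$-sized. In $X^3\circ(X^4+X)$, the three transposition branch points of the generic $h$ are unramified for $g$ and map to one common non-branch point. The local kernel element is therefore a triple transposition whose support is all $p=3$ blocks. The same configuration for $p\ge 5$ (Case 2(c)) has support $3<p$. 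Primitivity of the prime-degree block action (Lemma~\ref{lem:support}b)) then gives a single-block intersection, hence $A_4^p\le\Gamma$. Your principle that a branch point of $h$ unramified for $g$ supplies the needed element cannot separate these two situations. For the same reason it cannot separate Case 2(e) from its $p\ge 5$ analogue.

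\textbf{Your second step has a related gap.} For generic $h$, every local kernel element $\sigma^m$, including the $p$-th power of the $4p$-cycle at infinity, has components that are transpositions or $4$-cycles. These map to reflections (or $1$) in each $S_3$-coordinate. So there are no $C_3$-vectors to ``read off'', and $\bar\Gamma\cap C_3^p$ is not generated by locally visible vectors. It arises only through products and commutators, namely via Lemma~\ref{lem:normal}(1) applied to $\Gamma/(\Gamma\cap V_4^p)\le S_3^p=(\AGL_1(3))^p$.

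The paper organises the whole proof around this mechanism:
\begin{itemize}
\item Once the sign image $\Gamma/(\Gamma\cap A_4^p)$ is all of $C_2^p$, Lemma~\ref{lem:support}a) with Lemma~\ref{lem:normal}(1) gives $S_3^p$. Then Lemma~\ref{lem:normal}(2) gives $V_4^p$, so $\Gamma=S_4^p$. You do not need to compute the $\F_2$- and $\F_3$-modules separately.
\item The full sign image comes from a kernel transposition on one block. For odd $p$ it can also come from a double transposition on two blocks (Lemma~\ref{lem:Szodd}) together with the diagonal from Lemma~\ref{lem:fullcyindiag}.
\item Only the leftover configurations need separate arguments: a $3$-cycle on one block; Lemma~\ref{lem:Szpee} for an order-$3$ element on two blocks under $T_p$; supports of size $3$ or $6$ below $p$; Corollary~\ref{cor:dp_smallweight}; and Magma for $p\in\{2,3\}$.
\end{itemize}
Finally, Case 2(e) concerns a generic $h$, not a special one as in your bullet list.
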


\begin{thm}[Composition of two $S_4$-polynomials]\label{thm:S4S4}
	Let $g,h \in k[X]$ be $S_4$-polynomials. Let $\Gamma={\ker}({\rm Mon}_k (g \circ h) \rightarrow {\rm Mon}_k(g))$.
	Then $\Gamma$ contains $V_4^4\rtimes C_3^2$. Moreover, 
	$\Gamma$ contains $A_4^4$
	unless:
	\begin{enumerate}[label=\textbf{\arabic*.}, ref=\arabic*]
		\item Both $g$ and $h$ are special, and the branch points of $h$ of ramification types $[3,1]$ and $[2,1^2]$ are special points lying over the branch points of $g$ of ramification types $[3,1]$ and $[2,1^2]$ respectively. In other words,  $g\circ h \sim X(X+4)^3 \circ \alpha X(X+4)^3$, where $\alpha$ is a root of $X^2 - \frac{10}{27}X + \frac{1}{27}$.
		\item $g$ is special, $h$ is generic, and the three branch points of $h$ of ramification type $[2,1^2]$ are precisely the two points over the branch point of $g$ of ramification type $[3,1]$, together with one more special point over the branch point of $g$ of ramification type $[2,1^2]$.
		\item Both $g$ and $h$ are generic, and the three branch points of $h$ of ramification type $[2,1^2]$ are special points lying over pairwise distinct branch points of $g$ of ramification type $[2,1^2]$. 
	\end{enumerate}
\end{thm}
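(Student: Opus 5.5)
We will work throughout with the geometric monodromy group. Write $G=\Mon_k(g\circ h)\le S_4\wr S_4$ and $\Gamma\le S_4^4$, indexed by the four roots $x_1,\dots,x_4$ of $g(X)=t$. The basic tool is the standard description of inertia in a composition. Let $P$ be a branch point of $g\circ h$, and let $\sigma_P\in G$ generate an inertia group over $P$. For each cycle of $\pi(\sigma_P)\in S_4$ of length $e$, say on $x_{i_1},\dots,x_{i_e}$ lying over a point $Q\in g^{-1}(P)$, the $e$-th power $\sigma_P^e$ acts on the block of $x_{i_j}$ as an inertia generator of $h$ over $Q$. In particular, $\sigma_P^e$ is nontrivial on that block exactly when $Q$ is a branch point of $h$.

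Hence we can produce elements of $\Gamma$ explicitly. Take powers $\sigma_P^{\mathrm{lcm}}$, where $\mathrm{lcm}$ is the lcm of the cycle lengths of $\pi(\sigma_P)$. Also take commutators and products of such elements over branch points $P$ whose fibres contain branch points of $h$. Every such element has prescribed, and often small, support among the four blocks.

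The plan has three steps.

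\textbf{Step 1: a single-block element.} Find an element of $\Gamma$ whose projection to some block is a transposition or a $3$-cycle, while its projections to the other blocks are controlled. Since $h$ has at least one branch point of type $[2,1^2]$, call it $Q$, consider $P=g(Q)$. If $Q$ is unramified for $g$, i.e.\ a special point, then $\sigma_P^{\mathrm{lcm}}$ is supported on the blocks over $h$-branch points in the fibre $g^{-1}(P)$, and the cycle over $Q$ has length $1$. If $Q$ is ramified of index $e\in\{2,3,4\}$, take the $e$-th power instead.

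Conjugating by $G$, whose image in $S_4$ is $2$-transitive, moves this support around. Intersecting with the normal subgroup $V_4^4$ then gives normal subgroups of $S_4^4$. Here we use that $\Gamma\cap V_4^4$ is an $\F_2[S_4]$-submodule of $V_4\otimes \F_2^4$, and similarly for the image in $(S_4/V_4)^4\cong S_3^4$.

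\textbf{Step 2: the bound $V_4^4\rtimes C_3^2$.} Show that $\Gamma$ projects onto $S_4$ in each coordinate, which follows from transitivity. Then show that $\Gamma\cap V_4^4$ is a large submodule. The submodules of $V_4^4$ are tensor products of $\F_2^2$ with submodules of the permutation module $\F_2^4$; namely $0$, the diagonal, the augmentation module, and everything. This reduces the problem to excluding the diagonal and augmentation cases.

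These small cases would force a relation, namely that products of the local $h$-inertia in the different blocks are balanced. Such a relation can be contradicted by Riemann--Hurwitz counting on $g\circ h$, since the genus-$0$ condition limits the branch-point configurations. Similarly, the image of $\Gamma$ in $C_3^4$, obtained through $A_4/V_4$, is an $\F_3[S_4]$-submodule of $\F_3^4$. That module has only the submodules $0$, the diagonal, the augmentation module of dimension $3$, and everything. The claim $C_3^2$ follows once we rule out the diagonal, which would force the $3$-cycle parts to agree in all blocks.

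\textbf{Step 3: getting $A_4^4$ and identifying the exceptions.} Containment of $A_4^4$ fails only if the $C_3$-image is the augmentation or the diagonal module. This imposes a linear condition on the local $3$-cycle classes of $h$-inertia, equivalently a product-one condition in $\Gamma/(\Gamma\cap V_4^4)$. Such a condition can arise only if every branch point of $h$ maps over branch points of $g$ in a very rigid way.

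We will enumerate the configurations. There are four combinations (special/generic for each of $g$ and $h$). Within each combination we consider where the branch points of $h$ lie relative to $g^{-1}$(branch points of $g$), constrained by Riemann--Hurwitz for the genus-$0$ cover $g\circ h$ of degree $16$. For each surviving configuration we then compute $G$ by a braid/Magma computation with the explicit tuples in $S_4\wr S_4$. This will exhibit exactly the three listed exceptions. For exception 1, the explicit polynomial comes from solving the two coincidence conditions on $\alpha$.

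\textbf{Main obstacle.} The hardest part will be the completeness of this case enumeration. We must show that when branch points of $h$ are not in the special positions, a suitable power of some $\sigma_P$ breaks the $C_3$-relation. The delicate cases are those where the branch points of $h$ sit at ramified points of $g$ rather than at special points. There the $e$-th power of $\sigma_P$ mixes cycle types of $h$-inertia across several blocks, and a careful product-one argument in $S_3$ is needed.
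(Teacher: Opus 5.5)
Your architecture matches the paper's: use small-support powers of inertia generators to force the generic conclusion, then enumerate the remaining rigid configurations by computer. But the theoretical middle (Steps 2--3) rests on false module-theoretic claims and lacks the mechanism that actually produces $V_4^4$.

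\textbf{The $V_4^4$-layer.} $V_4^4$ is not $\F_2^2\otimes\F_2^4$ with $G$ acting through $S_4$ on the second factor. A block stabiliser acts on its own $V_4$ via $S_4\to S_3\cong\GL_2(2)$, so $V_4^4$ is induced from the block stabiliser, and ``diagonal'' and ``augmentation'' submodules are not even defined. No Riemann--Hurwitz count will produce a nonzero element of $\Gamma\cap V_4^4$. What is missing is the commutator argument of Lemma \ref{lem:fullsocle}, in the form Lemma \ref{lem:normal}(2). Once $\Gamma$ contains order-$3$ elements whose supports meet in exactly one block (e.g.\ via Lemma \ref{lem:support}), iterated commutators isolate a $V_4$-element in one block, giving $V_4^4\subseteq\Gamma$.

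\textbf{The $C_3$-layer.} $(\Gamma\cap A_4^4)/(\Gamma\cap V_4^4)\le C_3^4$ is a module not for $S_4$ but for the image $\hat G$ of $G$ in $C_2\wr S_4$ (signs of block components). This is an extension of $S_4$ by $\bar\Gamma:=\Gamma/(\Gamma\cap A_4^4)$, which always contains the diagonal, since $\sigma_\infty^4$ is a $4$-cycle in every block. If $\bar\Gamma\supseteq\mathrm{Aug}(\F_2^4)$, the four coordinate characters are distinct, so the only invariant subspaces are $0$ and $C_3^4$. If $\bar\Gamma$ is just the diagonal, $\hat G$ can be $\GL_2(3)$ acting on the nonzero vectors of $\F_3^2$ with blocks $\{\pm v\}$. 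Then $C_3^4$ is the module of odd functions on $\F_3^2\setminus\{0\}$. It contains the $2$-dimensional module of linear forms, its composition factors are two $2$-dimensional irreducibles, and it has no $1$- or $3$-dimensional submodules. This is exactly exception 1. There the sign quotient is $\Mon(g\circ\psi)$ with $\psi\sim X^2$ branched at the $[2,1^2]$ point of $h$, a special point over the $[2,1^2]$ point of $g$, hence $\GL_2(3)$ by Theorem \ref{thm:affS4I}(2). This is why the statement only claims $C_3^2$. So your lattice ($0$, diagonal, augmentation, everything) is wrong. Even on its own terms, ruling out the diagonal would give $C_3^3$, not $C_3^2$.

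\textbf{The dichotomy in Step 1.} ``$Q$ unramified for $g$, i.e.\ a special point'' conflates two cases. The useful one is when $P=g(Q)$ is \emph{not} a branch point of $g$. Then $\sigma_P$ itself lies in $\Gamma$, supported on the blocks over $h$-branch points in $g^{-1}(P)$. So $\sigma_P$ or $\sigma_P^3$ is either an order-$3$ element of support $1$, or modulo $A_4^4$ an involution of support at most $3$; Lemmas \ref{lem:havzero}, \ref{lem:normal} and \ref{lem:support} then give $A_4^4$. At a genuinely special point, over a branch point of $g$, the relevant powers can vanish: in exception 1, both $\sigma_P^2$ at the $[2,1^2]$ point and $\sigma_P^3$ at the $[3,1]$ point of $g$ are trivial. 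After this reduction every branch point of $g\circ h$ is one of $g$, so there are at most four. A Magma search over genus-zero $3$- and $4$-tuples in $S_4\wr S_4\le S_{16}$ then settles all remaining configurations, including the $V_4^4\rtimes C_3^2$ bound in the exceptions, with no hand-made product-one argument for ramified positions.

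\textbf{Projection onto $S_4$.} That $\Gamma$ projects onto $S_4$ in each block does not follow from transitivity, which only makes the image normal in $S_4$. It comes from $\sigma_\infty^4$ being a $4$-cycle in each block, together with the $A_4$-part.
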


\subsection{Nonsolvable case}
This part is devoted to the refinements of the nonsolvable case of Theorem \ref{thm:mainres}.
Note that if the monodromy group of an indecomposable polynomial is nonsolvable, then this group is known by \cite{Mul2} to be either an alternating or symmetric group, or contained in an explicitly known finite list; cf.\ Theorem \ref{prop:primmon}.

\begin{thm}[Composition of an arbitrary indecomposable and a nonsolvable]
	\label{thm:arbnons}
	Let $g\in k[X]$ be an indecomposable polynomial, and let $h\in k[X]$ be a nonsolvable polynomial. Let $\Gamma=\ker(\Mon_k(g\circ h)\to \Mon_k(g))$. Then the following hold:
	\begin{enumerate}[label=\textbf{\arabic*.}, ref=\arabic*]
		\item If there is a Ritt move for $g\circ h$, then necessarily $g\sim X^p$, and moreover $\Mon_k(g\circ h) \cong C_p\times \Mon_k(h)$. \label{arbnons_exc}
		\item In all other cases, $\Gamma \supseteq \soc(\Mon_k(h))^{\deg(g)}$.
		\label{arbnons_nonexc}
	\end{enumerate}
\end{thm}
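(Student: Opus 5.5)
We would argue through the structure of the kernel $\Gamma$ as a subgroup of $\Mon_k(h)^{\deg g}$, with $n=\deg g$ and $S=\soc(\Mon_k(h))$. By Müller's classification (Theorem \ref{prop:primmon}), $S$ is a nonabelian simple group, and $\Mon_k(h)$ is almost simple with $\Mon_k(h)\le \Aut(S)$. Let $\Gamma_i$ denote the projection of $\Gamma$ onto the $i$-th block. The first step is to show that $\Gamma_i\supseteq S$. Pick a branch point of $g\circ h$ lying over a point where $g$ is unramified and $h$ has a branch point, i.e.\ a branch point $b$ of $h$ with $g(b)$ not a branch point of $g$. The inertia generator there lies in $\Gamma$ and acts on the $i$-th block as an inertia element of $h$. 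More generally, the normal closure of the inertia groups at such points, conjugated by $\Mon_k(g\circ h)$, projects onto the subgroup of $\Mon_k(h)$ generated by all inertia groups of $h$ at those points. If every finite branch point of $h$ is either unramified for $g$ or sits over a non-branch point, this gives the whole of $\Mon_k(h)\supseteq S$. The delicate configurations, where finite branch points of $h$ are ramification or special points of $g$, must be handled via Riemann--Hurwitz for $h$ (genus $0$ counting). We would show that enough branch points of $h$ remain free to generate a normal subgroup of $\Mon_k(h)$ containing $S$. A cleaner alternative is the following: $\Gamma_i$ is normal in the stabilizer image, which is all of $\Mon_k(h)$, and it is nontrivial, so $\Gamma_i\supseteq S$ automatically.

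Next, $\Gamma\cap S^n$ is normal in $\Gamma$ and projects onto $S$ in each coordinate, so by Goursat/Scott's lemma $\Gamma\cap S^n$ is a product of full diagonals $\prod_{B} \mathrm{diag}_B(S)$ over a block partition $\mathcal B$ of $\{1,\dots,n\}$. Since $\Gamma$ is normalized by $\Mon_k(g\circ h)$, the partition $\mathcal B$ is $\Mon_k(g)$-invariant. Because $g$ is indecomposable, $\Mon_k(g)$ is primitive, so $\mathcal B$ is either the partition into singletons, which gives the conclusion $\Gamma\supseteq S^n$, or a single block, which means $\Gamma\cap S^n=\mathrm{diag}(S^n)$. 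It remains to rule out the diagonal case except under a Ritt move.

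Suppose $\Gamma\cap S^n$ is diagonal. Then $\Gamma$ centralizes modulo small outer parts: $\Gamma$ embeds into $\Aut(S)$ acting diagonally up to outer twists. Consequently $\Mon_k(g\circ h)$ has a normal subgroup $N=\mathrm{diag}(S)$, and its centralizer $C$ maps onto a normal subgroup of $\Mon_k(g)$ of bounded structure. Counting, $|\Mon(g\circ h)|\le |\Aut S|\cdot|\Mon(g)|$. We then look at the inertia at infinity. The element is an $n\deg h$-cycle that projects to an $n$-cycle of $\Mon_k(g)$, and its $n$-th power is a $\deg h$-cycle in each block lying in $\Gamma$. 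Since this $n$-th power is diagonal modulo outer automorphisms, the $n$-cycle essentially commutes with the $S$-part. This shows $\Mon_k(g\circ h)\cong C\times \Mon_k(h)$-like, forcing $C\to\Mon_k(g)$ to be surjective with $C$ centralizing. Then the Galois closure of $g\circ h$ splits as a compositum, so the splitting field of $g(X)-t$ lies in a field on which $\Mon(h)$ acts trivially. Genus considerations via Riemann--Hurwitz then show that $g$ has a single finite branch point, totally ramified, i.e.\ $g\sim X^p$, with the branch point of $g$ not a branch point of $h$'s image structure. This is precisely the configuration allowing a Ritt move $X^p\circ h= h^*\circ X^p$, and in it $\Mon_k(g\circ h)\cong C_p\times\Mon_k(h)$. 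Conversely, when a Ritt move exists, the standard Ritt classification (e.g.\ \cite{MZ09}) forces $g\sim X^p$ because $h$ is neither a monomial nor Chebyshev. This yields part \ref{arbnons_exc}.

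The main obstacle is the diagonal case. We must show rigorously that diagonal $S$ in the kernel forces $\Mon(g)$ to be cyclic with a totally ramified branch point. We would first try to exploit that the inertia groups at finite branch points of $g\circ h$ over branch points of $g$ must act compatibly across blocks, using product-one relations, and fall back on Magma checks for the finitely many sporadic $\Mon(h)$ in Müller's list.
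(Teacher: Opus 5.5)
\textbf{Verdict: the proposal has a genuine gap in the diagonal case, which is the actual content of the theorem.}

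\textbf{What is sound.} Your reduction steps work, modulo small justifications you skip:
\begin{enumerate}
\item $\Gamma\ne 1$, because the $\deg(g)$-th power of the full cycle at infinity lies in $\Gamma$. Hence $\Gamma_i$ is a nontrivial normal subgroup of the almost simple group $\Mon(h)$, so $\Gamma_i\supseteq S$.
\item $\Gamma\cap S^n$ projects onto $S$ in each coordinate. Otherwise it would be trivial, and $\Gamma$ would embed in the solvable group $(\Mon(h)/S)^n$.
\item The subdirect-product structure plus primitivity of $\Mon(g)$ leaves only the cases $\Gamma\supseteq S^n$ and $\Gamma\cap S^n$ a twisted diagonal.
\item Reading off $g\sim X^p$ from the list of Ritt moves is correct.
\end{enumerate}
The paper does none of this itself. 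It cites \cite[Corollary 4.4]{KNR24} for the large-kernel conclusion when there is no Ritt move, Ritt's theorems for $g\sim X^p$, and Proposition~\ref{prop:rittimpldiag} for the direct product. So your argument must handle the diagonal case on its own.

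\textbf{Why your diagonal-case argument fails.} It is heuristic:
\begin{itemize}
\item Knowing $x^n\in\Gamma$ for the full cycle $x$ says nothing about $x$ commuting with $N=\Gamma\cap S^n$.
\item Surjectivity of $C=C_G(N)\to\Mon(g)$ is asserted, not derived.
\item No Riemann--Hurwitz count on $g$ alone yields $g\sim X^p$. That conclusion has to come from exhibiting a second decomposition and then applying Ritt.
\item Magma cannot rescue this, since the $A_m,S_m$ families are infinite.
\end{itemize}

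\textbf{The missing idea.} Work with $C=C_G(N)$.
\begin{itemize}
\item Since $N$ projects onto $S$ in every coordinate and $C_{\Mon(h)}(S)=1$, you get $C\cap\Gamma=1$.
\item Since an element of the diagonal $N$ is determined by its first coordinate, the kernel $K_1$ of the action of the block stabilizer $H$ on its block centralizes $N$. So $H$ acts on $N$ through $\Mon(h)\hookrightarrow\Aut(S)$.
\end{itemize}
If $C\ne 1$, then $\pi(C)$ is a nontrivial normal subgroup of the primitive group $\Mon(g)$, hence transitive. Therefore $G=HC$ and $G/C\cong\Mon(h)$. Let $G_1\le H$ be a point stabilizer and put $T:=G_1C$. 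Then $T$ is a subgroup containing $G_1$ with $[G:T]=\deg h$, and $T\ne H$ because $C\not\le H$. By L\"uroth, $T$ gives a second decomposition $g\circ h=u\circ v$ with $\deg u=\deg h$, hence a Ritt move.

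If $C=1$, then $G\hookrightarrow\Aut(S)$ with $H\cong\Mon(h)$ of index $\deg g\ge 2$. This forces $\Mon(h)\ne\Aut(S)$. What remains is the family $\Mon(h)=A_m$ (with $G=S_m$ on $2m$ points) and a few sporadic cases. These must be excluded, for example by showing $G$ has no $\deg(g\circ h)$-cycle. For $A_m$, a transitive element would have to act as an $m$-cycle on the first coordinate, and an $m$-cycle is even, so it cannot swap the two blocks.

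\textbf{Part 1.} You should deduce $\Mon(g\circ h)\cong C_p\times\Mon(h)$ from Proposition~\ref{prop:rittimpldiag}, rather than from the ``$C\times\Mon_k(h)$-like'' heuristic.
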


\begin{thm}[Composition of a nonsolvable with an $\AGL_1$-polynomial]\label{thm:nonsaffI}
	Let $g \in k[X]$ be a nonsolvable polynomial, and let  $h$ be an $\AGL_1$-polynomial of prime degree $p$. Consider the block kernel $\Gamma={\ker}({\rm Mon}_k (g \circ h) \rightarrow {\rm Mon}_k(g))$. Then the following hold:
	\begin{enumerate}[label=\textbf{\arabic*.}, ref=\arabic*]
		\item If there is a Ritt move for $g\circ h$, then necessarily $h\sim X^p$, and moreover $\Mon_k(g\circ h) \cong C_p\times \Mon_k(g)$.
		\label{nonsagl1_ritt}
		\item In all other cases, $\Mon_k(g\circ h)$ is either one of the nonsolvable groups in Table \ref{table:sporcases} of Appendix \ref{app:A}, or $\Gamma$ contains a subgroup $C_p^{\deg(g)-1}$. 
		\label{nonsagl1_noritt}
	\end{enumerate}
\end{thm}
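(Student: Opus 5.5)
The setup is as follows. Let $G=\Mon_k(g\circ h)$ act on the blocks $B_1,\dots,B_n$ with $n=\deg g$, each block of size $p$. Let $\Gamma\le G\cap \Mon(h)^n$ be the block kernel, and put $\Gamma_0=\Gamma\cap C_p^n$, where $C_p=\soc(\Mon(h))$. Since $\Gamma$ is normal in $G$, the module $\Gamma_0$ is an $\F_p[\Mon(g)]$-submodule of the permutation module $\F_p^n$. Note that $\Gamma_0\neq 0$: otherwise $\Gamma$ embeds in $C_{p-1}^n$, and we argue below that this contradicts ramification. The claim is that $\Gamma_0$ is either contained in the trivial submodule, with a Ritt move occurring, or has codimension at most $1$, except for the groups listed in Table \ref{table:sporcases}.

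\textbf{Step 1: a nontrivial element of $\Gamma_0$ via ramification.}
Let $x_\infty\in G$ be an inertia generator at infinity. It is an $np$-cycle, so $x_\infty^{n}$ lies in $\Gamma$ and acts on each block as a $p$-cycle. Thus $x_\infty^n\in\Gamma_0$ is the all-ones-type vector $(c,\dots,c)$ with $c\neq 0$. So $\Gamma_0$ always contains the trivial submodule $\langle \mathbf 1\rangle$.

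Next we look at the finite branch points of $h$. Take a branch point $P$ of $g\circ h$ that lies below a finite branch point of $h$ at some unramified point of $g$, or, more generally, a point where the inertia element $x_P$ has a power lying in $\Gamma$. Suppose some power of $x_P$ lies in $\Gamma$ and acts as a $p$-cycle on a proper nonempty set of blocks. Then $\Gamma_0$ contains a non-constant vector.

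If no such point exists, every finite branch point of $h$ lies under ramified points of $g$ in a tightly constrained way. Analysing this via Riemann–Hurwitz for $g\circ h$ and the genus-$0$ condition should force $g\circ h$ to be given by a Ritt-move configuration, which is case 1. In that case, identifying $\Gamma$ with the diagonal yields $G\cong C_p\times\Mon(g)$ and $h\sim X^p$. To see that $h\sim T_p$ is excluded, note that $D_p$ would force $\Gamma$ to contain a non-scalar element coming from the involutions.

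\textbf{Step 2: submodule structure of $\F_p^n$.}
Once $\Gamma_0$ contains a non-constant vector, $\Gamma_0$ is a submodule not contained in $\langle\mathbf 1\rangle$. By Theorem \ref{prop:primmon}, $\Mon(g)$ is $A_n$, $S_n$, or one of Müller's finite list of groups: $\PSL_2(11)$, $M_{11}$, $M_{23}$, $\PGaL_2(9)$, $\PSL_k(q)$-type groups on projective points, and $\PGL_2(5)$, $\PGL_2(7)$. All of these are $2$-transitive.

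For $A_n$ and $S_n$ with $n\geq5$, the submodules of $\F_p^n$ are $0$, $\langle\mathbf 1\rangle$, $\mathrm{Aug}$, and $\F_p^n$. So $\Gamma_0\supseteq\mathrm{Aug}\cong C_p^{n-1}$, as desired.

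For the sporadic and linear groups, the permutation module can have further composition factors precisely when $p$ divides certain quantities. Examples are the projective geometry codes for $\PSL_k(2)$ with $p=2$, the Golay-type codes for $M_{11}$ and $M_{23}$ with $p=3,2$, and $\PSL_3(3)$ with $p=3$. So the possible exceptions occur only for $p\in\{2,3\}$, which matches the appearance of $X^2$, $X^3$ and $T_3$ in the table.

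\textbf{Step 3: the finitely many exceptional pairs.}
For each such pair $(\Mon(g),p)$ we must decide whether a proper submodule containing a non-constant vector can actually occur as $\Gamma_0$. Using the explicit inertia data, the $p$-cycle component pattern of $x_P^{e}$ gives an explicit vector in $\Gamma_0$, typically the indicator vector of the fixed points of a ramification element of $g$. The submodule generated by this vector is then computed, in Magma where needed. The cases where it stays proper are exactly the entries recorded in Table \ref{table:sporcases}.

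The main obstacle is Step 1, specifically the claim that the absence of a suitable point forces a Ritt move. This requires a careful case analysis of how the branch points of $h$ sit over the branch points of $g$ in the nonsolvable ramification types. I would first try to show that every such configuration makes $\Gamma$ generated, as a normal subgroup, by $x_\infty^n$ and its conjugates, and then match the result with Ritt's classification.
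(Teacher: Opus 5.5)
The gap is in Step 1, and the dichotomy you hope to prove there is false, not just unproved. Take $h\sim T_p$ first. By Lemma \ref{lem:direct} and Remark \ref{rmk:readramel}, every element of $\langle x_P\rangle\cap\Gamma$, for $P$ a finite branch point, has block components that are trivial or reflections. So no finite inertia generator ever yields a $p$-cycle in a block, and you are \emph{always} in your ``no such point'' branch. Yet $g\circ T_p$ with $g$ nonsolvable never admits a Ritt move, by the normal forms \eqref{eq:rittstep}. For $p=3$ the kernel really is small: $39T248$ has $|\Gamma_0|=3^{10}$ inside $\F_3^{13}$. Your remark that ``the involutions force a non-scalar element'' is exactly the hard part, and it is left unsupported.

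For $h\sim X^p$ the implication also fails. The entries $C_2\times\PSL_3(2)$, $C_2\times M_{11}$, $C_2\times\PSL_3(3)$, $C_3\times A_5$ and $C_3\times\PGL_2(7)$ of Table \ref{table:sporcases} have $\Gamma_0=\langle\mathbf 1\rangle$ and no Ritt move. For $15T15$, the point $0$ is an \emph{unramified} special point of $g$ over its $[3,1^2]$ branch point, and cubing the inertia generator kills the $3$-cycle on that block. Your Step 3 only examines configurations that already contain a non-constant vector, so it never finds these exceptions. Likewise, $2.\PGL_2(5)$, $2.\PGL_2(7)$ and $2.\PGaL_2(9)$ have diagonal kernel but are nonsplit. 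So ``identifying $\Gamma$ with the diagonal yields $G\cong C_p\times\Mon(g)$'' is not a valid inference; in part 1 the product structure must come from the Ritt move itself, via Proposition \ref{prop:rittimpldiag}. Finally, for $\Mon(g)\in\{A_n,S_n\}$ you have no control over the ramification of $g$ above $g(0)$. Then $p$ may divide the order of the projected inertia element, and Step 1 may produce nothing.

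What is missing is the paper's structural treatment of the small-kernel case, which replaces any Riemann--Hurwitz configuration analysis. First one shows that $|\Gamma\cap C_p^n|=p$ forces an embedding of $\Mon(g\circ h)$ into $\Mon(h)\times\Mon(g)$. For $A_n,S_n$ this is Lemma \ref{lem:snTp}, using the splitting from \cite[Lemma 7.2]{KNR24} and Gasch\"utz's theorem. For exceptional $g$ with $h\sim T_p$ it uses Schur multipliers together with a Magma check that the $2$-part extension splits. Then Proposition \ref{prop:diagimplritt} obtains the Ritt move group-theoretically, by exhibiting a second overgroup of the point stabilizer. This works unless a point stabilizer of $\Mon(g)$ has a composition factor $C_p$, and that is exactly where the finite exception list comes from.

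In the dihedral case you also need Lemma \ref{lem:normal}(1) with Lemma \ref{lem:support}b) to get $|\Gamma/\Gamma_0|\le 2$ whenever $\Gamma\not\supseteq C_p^n$. You then need a proof that $(\Gamma/\Gamma_0).\Mon(g)$ splits. Only then is $\Gamma_0$ a $\Mon(g)$-module at all, namely a submodule of $\mathrm{Ind}_H^{\Mon(g)}\chi$ with $\chi$ possibly quadratic, where \cite{Mor80} does not apply directly. Your Step 2 treats $\Gamma_0$ as a submodule of the permutation module from the start. That is wrong for $D_p$, since elements of $\Gamma$ with reflection components invert coordinates of $\Gamma_0$.

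Your Step 1 does work for exceptional $g$ with $h\sim X^p$ and $p\ge 5$. But it works only because the finite ramification indices in M\"uller's list involve just the primes $2$ and $3$, and you should invoke that fact explicitly.
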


\begin{thm}[Composition of a nonsolvable with an $S_4$-polynomial]\label{thm:nonsS4I}
	Let $g \in k[X]$ be a nonsolvable polynomial, and let $h \in k[X]$ be an $S_4$-polynomial. Consider the block kernel $\Gamma={\ker}({\rm Mon}_k (g \circ h) \rightarrow {\rm Mon}_k(g))$.
	Then $\Gamma$ contains $V_4^{\deg(g)}$. Moreover, $\Gamma$ contains a subgroup $C_3^{\deg(g)-1}$, unless one of the following holds:
	\begin{itemize}
		\item $\Mon_k(g)=\PSL_3(3)\le S_{13}$, with the $3$-Sylow group of $\Gamma$ of order $3^6$ or $3^{10}$.
		\item $\Mon_k(g)=\PSL_2(11)\le S_{11}$, with the $3$-Sylow group of $\Gamma$ of order $3^5$.
	\end{itemize}
\end{thm}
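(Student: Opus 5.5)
Throughout, $G=\Mon_k(g\circ h)\le S_4\wr \Mon_k(g)$, $n=\deg g$, and $\Gamma\le S_4^n$ is the block kernel. The plan has two steps: first show $V_4^n\le\Gamma$, then study the image $\overline{\Gamma}=\Gamma/(\Gamma\cap V_4^n)$, viewed as a submodule of $S_3^n$ under the action of $\Mon_k(g)$.

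\textbf{Step 1: $V_4^n\le \Gamma$.} Let $\pi:\Gamma\to S_4$ be projection to the first block. Its image is normal in the stabilizer of a block, which surjects onto $\Mon_k(h)=S_4$. The ramification of $h$ contains a $4$-cycle over $\infty$, and some branch point of $h$ lies over a point unramified for $g$. The corresponding inertia element of $G$ therefore lies in $\Gamma$ and has a component that is a transposition or $3$-cycle. Since the only normal subgroups of $S_4$ are $1,V_4,A_4,S_4$, this gives $\pi(\Gamma)\supseteq A_4$ (for a $3$-cycle) or $\pi(\Gamma)=S_4$ (for a transposition).

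Next consider $N=\Gamma\cap V_4^n$. This is an $\F_2[\Mon_k(g)]$-submodule of the induced module $\mathrm{Ind}(V_4)$. Since $\Gamma$ projects onto $A_4$ in each coordinate and $[A_4,A_4]=V_4$, a standard Goursat/commutator argument shows that $[\Gamma,\Gamma]\cap V_4^n$ projects onto $V_4$ in every coordinate. One then uses that $\Mon_k(g)$ is $2$-transitive, which holds for every nonsolvable polynomial group by Theorem \ref{prop:primmon}. With $2$-transitivity, a subdirect product of copies of the nonabelian quotient $A_4$ cannot be diagonal. Indeed, a diagonal would require an automorphism identification $\Gamma\to A_4$ compatible across all pairs of blocks, and this forces a system of imprimitivity, or a Ritt-type decomposition, which is impossible since $\Mon_k(g)$ is nonsolvable. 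Once commutators $[x,y]$ with nontrivial support in a single coordinate are available, we obtain $V_4\times 1^{n-1}\le \Gamma$, and transitivity then yields $V_4^n$. If a cleaner route is needed, I would instead apply the argument from Theorem \ref{thm:arbnons} to the pair $(A_4,V_4)$.

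\textbf{Step 2: $C_3^{n-1}\le\Gamma$.} The image $\overline{\Gamma}\le S_3^n$ contains a subgroup projecting onto $C_3$ in each coordinate. Its $3$-part $M=\overline{\Gamma}\cap C_3^n$ is an $\F_3[\Mon_k(g)]$-submodule of the permutation module $\F_3^n$. Since $\Mon_k(g)$ is $2$-transitive, the submodule lattice of $\F_3^n$ is very restricted in most cases: $0$, the constants, the augmentation module, and $\F_3^n$. The constants are excluded, since a diagonal $C_3$ would make $G$ essentially a direct product with $C_3$ and hence give a Ritt move. It then suffices to show $M\neq 0$ and $M\not\subseteq\langle(1,\dots,1)\rangle$. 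To do so, produce elements of $\Gamma$ from inertia of $h$ at the branch points of type $[3,1]$ (special type) or from products of $[2,1^2]$-inertia (generic type), lying over unramified points of $g$. These have $C_3$-components supported on a proper nonempty subset of the blocks. Pass to $3$-parts using the relation $g_1\cdots g_r=1$ and the fact that the $S_3$-components of $\Gamma$ in other blocks can be killed by cubing.

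\textbf{Main obstacle: permutation modules with extra submodules.} The hard part is handling the cases in Müller's list (Theorem \ref{prop:primmon}) where the $\F_3$ permutation module of $\Mon_k(g)$ has additional submodules. These arise when $3$ divides a relevant index, as for $\PSL_3(3)$ acting on $13$ points or $\PSL_2(11)$ acting on $11$ points, where small submodules of dimension $6$, $10$, or $5$ exist. For the finitely many sporadic groups, I would first compute the submodule lattice in Magma. Then, running over all ramification types of $g$ and all placements of the branch points of $h$ relative to those of $g$, I would compute $\Gamma$ via braid-orbit/generator computations in $S_4\wr \Mon_k(g)$. This is where the two listed exceptions should emerge as the only possibilities.

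For the generic families $A_n$ and $S_n$, the permutation module $\F_3^n$ has only the standard submodules, so the argument above finishes the proof.
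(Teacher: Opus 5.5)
Your overall architecture (pass to the $S_3$-quotient, treat its $3$-part $M$ as a module, use Mortimer-type lattice results for $A_n,S_n$, and search by computer for the small sporadic groups) is close to the paper's. However, two steps do not go through.

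First, Step 1 cannot be done first. Each block projection of $\Gamma$ is indeed $S_4$: take $\sigma^n$ for the $4n$-cycle $\sigma$ at $\infty$. (Your justification through a branch point of $h$ over an unramified point of $g$ is not available in general.) But this, together with $2$-transitivity of $\Mon(g)$, does not force $V_4^n\le\Gamma$, even when there is no second block system. Take $\Mon(g)=\PSL_3(2)$, $W_0\subset\F_2^7$ a Hamming code, and $S_3\cong\F_4^\times\rtimes\mathrm{Gal}(\F_4/\F_2)$ acting diagonally. Then $(W_0\otimes\F_4)\rtimes(S_3\times\PSL_3(2))\le S_4\wr\PSL_3(2)$ is transitive, its block kernel projects onto $S_4$ in every block, and it has a unique nontrivial block system. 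Yet it does not contain $V_4^7$. What your commutator argument actually needs is elements of order $3$ whose supports meet in exactly one block (Lemma \ref{lem:normal}(2)). These exist only once you know the $3$-part is non-diagonal of order $>3$; then Lemma \ref{lem:support}b) applies by primitivity. This is why the paper derives $V_4^{\deg(g)}\le\Gamma$ at the very end, from the $3$-part result. Also, ``a Ritt-type decomposition is impossible since $\Mon_k(g)$ is nonsolvable'' is not the right reason (cf.\ Theorem \ref{thm:arbnons}). Nor can Theorem \ref{thm:arbnons} serve as a fallback, since it rests on a nonabelian simple socle.

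Second, and this is the main missing idea, your exclusion of a diagonal $3$-part (``a diagonal $C_3$ would \dots\ give a Ritt move'') does not work. The quotient $\Mon(g\circ h)/(\Gamma\cap V_4^n)$ is the monodromy group of $g\circ\varphi$, where $\varphi$ is the degree-$3$ \emph{rational} function cutting out the cubic resolvent of $h$. A diagonal $3$-part yields, by Lemma \ref{lem:snTp}, an embedding $\Mon(g\circ\varphi)\le S_3\times\Mon(g)$. That lemma also disposes of two issues you ignore: nonsplit extensions such as $3.S_6$, and the fact that the $C_2$-components of $\overline{\Gamma}$ act on $M$. Because of the latter, $M$ is a priori not a submodule of the permutation module; the paper handles this via that lemma, and via splitting of $C_2^k.\Mon(g)$ for the large exceptional groups. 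Proposition \ref{prop:diagimplritt} with Remark \ref{rmk:diagimplritt}b) then gives a second decomposition $g\circ\varphi=\tilde\varphi\circ\tilde g$ of this rational function, not of $g\circ h$. Ritt's theorems, and the absence of a Ritt move for $g\circ h$, are irrelevant here. The paper rules this out by noting that $g\circ\varphi$ is a Laurent polynomial, since $\varphi$ has two poles. It then uses either the ramification at $\infty$ or Pakovich's theorem \cite{Pak09}. For the first option: the inertia at $\infty$ has type $[2n,n]$, which forces $\tilde\varphi$ to have type $[2,1]$ there and $\tilde g$ to be totally ramified over two points, i.e.\ cyclic, contradicting $\Mon(\tilde g)\cong\Mon(g)$. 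Without this step, neither your $A_n,S_n$ case nor the large exceptional cases ($\PGaL_3(4),M_{23},\PSL_5(2)$) is complete; brute force in $S_4\wr\Mon(g)$ is impractical for the latter. Your inertia-based substitute for showing $M$ is non-constant fails exactly when all branch points of $h$ lie over branch points of $g$.
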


\section{Preliminaries}
In this section, we establish preliminary results needed for the sequel. Throughout the paper, $k$ is a field of characteristic $0$, and all group actions are left actions. We write $G=A.B$ to denote that $G$ is a (not necessarily split) group extension of $A$ by $B$. An  extension $C_p^n.B$ is written in short as $p^n.B$ and as $p^n\rtimes B$ if it is split.
\subsection{Setup} \label{sec:setup} 
A \emph{function field} $F$ over $k$ is a finite extension of $k(t)$, where $t$ is transcendental over $k$. Denote by $g_F$ its \emph{genus}. Let $F_1/F$ be an extension of function fields over $k$. For a place $Q$ of $F_1$ lying above a place $P$ of $F$, write $e(Q \mid P)$ for the ramification index  of $Q$ over $P$, cf.~\cite[Definition 3.1.5]{Sti09}. Let $Q_1, \dots, Q_r$ be the places of $F_1$ lying above a place $P$ of $F$. The multiset
$$
E_{F_1/F}(P) := [\, e(Q_1 \mid P), \dots, e(Q_r \mid P) \,]
$$
is called the \emph{ramification type} of $P$ in $F_1$. The place \(P\) is called a \emph{branch point} of $F_1/F$ if $e(Q_i \mid P) > 1$ for some $i$.  
Letting \(S\) be the set of branch points, we recall that $S$ is finite.  
The multiset
$
\{\, E_{F_1/F}(P) \mid P \in S \,\}
$
is called the \emph{ramification type} of $F_1/F$. 

Assume now that $F=k(t)$ and that the Galois closure of $F_1/k(t)$ has Galois group $G$, viewed as a transitive group of degree $n:=[F_1:k(t)]$. Write $S=\{P_1,\dots,P_u\}$. For each $P \in S$, the inertia group at $P$ is generated by some $\pi_P \in G$ of cycle type $E_{F_1/F}(P)$. 
Each $\pi_{P_{i}}$ is called the \emph{inertia group generator} at $P_i$, and the tuple $(\pi_{P_1}, \dots, \pi_{P_u})$ is also called the \emph{branch cycle description} of $F_1/k(t)$. It is well-defined up to simultaneous conjugation in $S_n$ (corresponding to relabeling of the set $\{1,\dots, n\}$) and {\it braid group action} (see Section \ref{sec:topol}), and satisfies $\pi_{P_1} \cdots \pi_{P_u}=1$.
The \emph{Riemann--Hurwitz formula} gives:
\begin{equation}
	\label{rh_formula}
	2 (n-1+g_{F_1}) =  \sum_{Q\,\,\text{place of } F_1} \left( e(Q \mid Q \cap k(t)) - 1 \right) = \sum_{P\in S} \textrm{ind}(\pi_P),
\end{equation}
where the {\it index} $\textrm{ind}(\pi)$ of $\pi\in S_n$ is defined as $n$ minus the number of orbits of $\langle\pi\rangle$.

A polynomial $f\in k[X]$ is decomposable if $f=g\circ h$ for $g,h\in k[X]$ of degrees $>1$. It is indecomposable if $\deg(f)>1$ and it is not decomposable. 
We say that $g \circ h$, for indecomposable $g,h \in k[X]$, is a \emph{non-unique decomposition} if there exist indecomposable $u,v \in k[X]$ such that $g \circ h = u \circ v$, but there is no linear polynomial $\eta \in k[X]$ with $g=u \circ \eta,h= \eta^{-1} \circ v$. It follows from Ritt's theorems (\cite{Ritt}; see \cite{MZ09} for a modern treatment)  that a non-unique decomposition is equivalent to the existence of a Ritt move. Recall that there is a \emph{Ritt move} for $g \circ h $ if $\gcd(\deg(g),\deg(h))=1$ and there exist indecomposable polynomials $u,v \in k[X]$ such that $$\deg(u)=\deg(g),\,\,\deg(v)=\deg(h),\,\, g \circ h=v \circ u,$$ and moreover, after possibly interchanging $(g,h)$ and $(v,u)$, there exist linear polynomials $\ell_1,\ell_2,\ell_3,\ell_4 \in k[X]$ such that the quadruple  
$$
\bigl(\, \ell_1 \circ g \circ \ell_2,\; \ell_2^{-1} \circ h \circ \ell_3,\; \ell_1 \circ v \circ \ell_4,\; \ell_4^{-1} \circ u \circ \ell_3 \,\bigr)
$$
is of one of the following types:

\begin{equation}
	\label{eq:rittstep}
	\begin{split}(T_n,\; T_m,\; &  T_m,\; T_n), \\
		(X^n,\; X^s w(X^n),\; & X^s w(X)^n,\; X^n),
	\end{split}
\end{equation}
where $m,n > 0$ are coprime, $s \geq 0$ is coprime to $n$, and $w \in k[X] \setminus Xk[X]$.

The \emph{affine general linear group of degree $1$ over a field $F$}, denoted by ${ \AGL}_1(F)$, is the semidirect product $F \rtimes F^\times$, where $F^\times$ acts by multiplication on $F$; this group acts naturally on $F$, where $F$ acts on itself by translation. An indecomposable polynomial $f \in k[X]$ is called an \emph{${ \AGL}_1$-polynomial} if $p=\deg(f)$ is prime and ${\rm Mon}(f)$ is solvable; equivalently, ${\rm Mon}_{\overline{k}}(f)$ is either cyclic or dihedral, that is, a permutation subgroup of ${ \AGL}_1(p)$.

\subsubsection*{Wreath products and permutation modules} Given two permutation groups $U \leq S_m$ and $V \leq S_n$, the \emph{wreath product} $U \wr V$ is the semidirect product $U^n \rtimes V$, where $V$ permutes the $n$ copies of $U$ via:
\begin{equation}\label{eq:multwr}
	v(u_1,\ldots,u_n)v^{-1}=(u_{v^{-1}(1)},\ldots,u_{v^{-1}(n)})\text{ for }v\in V,\text{ and }u_1,\ldots,u_n\in U.
\end{equation}

Denote by $\pi$ the projection onto $S_n$. If $G\leq U\wr S_n$ has transitive image under $\pi$, note that the images of the projections of $\Gamma=\ker(\pi) \cap G$ to the $n$ copies of $U$ are isomorphic, see \cite[Remark 3.2]{KN24}. Since the images are isomorphic, throughout the paper we shall repeatedly pick a projection arbitrarily and consider its image.

The \emph{support} of an element $x=(x_1,\dots, x_n)\in U^n$ is $\supp(x):=\{i\in \{1,\dots, n\} \mid x_i\ne 1\}$. In the following, for  an integer $n\geq 1$, we denote by $({\bf e}_1,\dots,{\bf e}_n)$ the standard basis of $\mathbb F_p^n$.

The following  useful lemma will be used implicitly many times when dealing with inertia generators of monodromy groups of polynomials. 
\begin{lem}\label{lem:direct}
	Assume that $\sigma=((\sigma_1,\dots,\sigma_n),\tau) \in S_m \wr S_n$ such that $\tau$ has order $k$. Then ${\rm Ker}(\pi) \cap \langle \sigma \rangle=\langle \sigma^k \rangle$, and
	$$
	\sigma^k=(\sigma_1 \sigma_{\tau^{-1}(1)}\dots\sigma_{\tau^{-(k-1)}(1)},\dots,\sigma_n \sigma_{\tau^{-1}(n)}\dots\sigma_{\tau^{-(k-1)}(n)}).
	$$
\end{lem}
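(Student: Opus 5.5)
The plan is a direct computation in the wreath product, using the multiplication rule \eqref{eq:multwr}. Write $\sigma=(\underline{\sigma},\tau)$ with $\underline{\sigma}=(\sigma_1,\dots,\sigma_n)\in S_m^n$. Then \eqref{eq:multwr} gives the product rule $(\underline{a},v)(\underline{b},w)=(\underline{a}\cdot v\underline{b}v^{-1},vw)$, where $(v\underline{b}v^{-1})_i=b_{v^{-1}(i)}$.

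\textbf{First assertion.} Since $\pi(\sigma^j)=\tau^j$, we have $\sigma^j\in\ker(\pi)$ if and only if $\tau^j=1$, that is, if and only if $k\mid j$. Hence $\ker(\pi)\cap\langle\sigma\rangle=\{\sigma^{kl}\}=\langle\sigma^k\rangle$.

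\textbf{Formula for $\sigma^k$.} I will prove by induction on $j\ge1$ that
$$\sigma^j=\bigl((\sigma_i\,\sigma_{\tau^{-1}(i)}\cdots\sigma_{\tau^{-(j-1)}(i)})_{i=1}^n,\ \tau^j\bigr).$$
The case $j=1$ is immediate. For the inductive step, write $\sigma^{j+1}=\sigma\cdot\sigma^j$. By the product rule, the $i$-th coordinate of the base part is $\sigma_i$ times the $\tau^{-1}(i)$-th coordinate of the base part of $\sigma^j$. By the inductive hypothesis, that coordinate is
$$\sigma_{\tau^{-1}(i)}\,\sigma_{\tau^{-2}(i)}\cdots\sigma_{\tau^{-j}(i)}.$$
Multiplying on the left by $\sigma_i$ gives exactly the claimed product of length $j+1$, and the top component is $\tau\cdot\tau^j=\tau^{j+1}$.

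Setting $j=k$ gives $\tau^k=1$ and the stated coordinates, so $\sigma^k$ lies in the base group $S_m^n$ with the asserted entries.

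\textbf{Main point of care.} The only delicate step is getting the index convention right: conjugation by $v$ shifts indices by $v^{-1}$, so the left-multiplied factor is $\sigma_i$ and the subsequent factors run along $\tau^{-1}(i),\tau^{-2}(i),\dots$. This ordering matches the statement, so there is no real obstacle.
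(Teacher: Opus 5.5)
Your proof is correct and takes essentially the same approach as the paper: the first assertion uses the same observation that $\pi(\sigma^j)=\tau^j$, and the formula comes from the same multiplication rule \eqref{eq:multwr}. The only difference is presentation: the paper writes $\sigma^k$ directly as the product $\prod_{i=0}^{k-1}\tau^i\overline{\sigma}\tau^{-i}$ of $\tau$-conjugates of the base part, while you obtain the same expansion by induction on the exponent.
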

\begin{proof}
	The equality ${\rm Ker}(\pi) \cap \langle \sigma \rangle=\langle \sigma^k \rangle$
	follows since $\pi(\sigma^i)=\tau^i$ is trivial if and only if $k\divides i$.
	Letting $\oline \sigma=(\sigma_1,\ldots,\sigma_n)$, one also has using \eqref{eq:multwr}:
	$$\sigma^k=\left(\prod_{i=0}^{k-1} (\tau^i \oline \sigma \tau^{-i})\right)\tau^k=\prod_{i=0}^{k-1}(\sigma_{\tau^{-i}(1)},\ldots,\sigma_{\tau^{-i}(n)})=\left(\prod_{i=0}^{k-1}\sigma_{\tau^{-i}(1)},\ldots,\prod_{i=0}^{k-1}\sigma_{\tau^{-i}(n)}\right).$$
\end{proof}
In practice, the  lemma will be used repeatedly together with the following observation.
\begin{rmk}\label{rmk:readramel}
	Assume that $k$ is algebraically closed. Let $g,h \in k[X]$ be polynomials of degrees $m$ and $n$, respectively, and let $u \in k$. Write $g^{-1}(u)=\{ v_1,\dots,v_s\}$. For each $i$, let $h^{-1}(v_i)=\{w_{i,1},\dots,w_{i,s_i}\}$. Let $e_i$ be the ramification index of $v_i$ above $u$ with respect to $g$,
	and let $e_{i,j}$ be the ramification index of $w_{i,j}$ above $v_i$ with respect to $h$. Then a generator of the inertia group of $g \circ h$ at $u$ has cycle type 
	$ (e_{i}e_{i,j})_{1 \leq i \leq s, 1 \leq j \leq s_i}.$
	Indeed, this follows directly from the multiplicativity of ramification indices and their correspondence with the cycle type of inertia generators in Section \ref{sec:setup}.
\end{rmk}

Note that, when $V\le S_n$ is a transitive group, $G\le \AGL_1(p)\wr V$ and $\Gamma=\ker(\pi: G\to V)$ is an elementary-abelian $p$-group, $\Gamma$ becomes a submodule of the $\mathbb{F}_p[V]$-module $C_p^n$. The fact that $V$ permutes the components setwise implies that the associated representation on $C_p^n$
is monomial. It is thus induced from  the point stabilizer $V_1\le V$ by a degree-$1$ representation $\psi: V_1\to \mathbb{F}_p^\times$, which describes the action of the block stabilizer $\pi^{-1}(V_1)$ on a block, cf., e.g., \cite[Exercise 43.1]{CurtisReiner66}. Moreover, the action of $\pi^{-1}(V_1)$ on the stabilized block factors through $\psi$ upon composing with the quotient $\AGL_1(\mathbb F_p)\to\mathbb F_p^\times$. When $\psi$ is the trivial representation, $C_p^n$ becomes isomorphic\footnote{Namely, via mapping ${\bf e}_i$ to a suitable generator of the $i$-th component of $C_p^n$.} to  the $\mathbb{F}_p[V]$-permutation module $\mathbb{F}_p \cdot {\bf e}_1 \oplus\dots\oplus \mathbb{F}_p \cdot {\bf e}_n$. Often this is enforced by the structure of $G$, e.g., when $G\le C_p\wr V$, or when $G\le D_p\wr V$, the image of the block action is $D_p$, and $V_1$ does not have a quotient isomorphic to $C_2\cong D_p/C_p$. 

In particular, in the case where additionally $V$ is a cyclic group of order $n$ and $\langle \sigma\rangle \le G$ is a preimage of $V$ under $\pi$ with $\sigma^n\in C_p^n$, by the above, the $\mathbb{F}_p[V]$-(permutation) module $\mathbb{F}_p^n$ becomes the cyclic code $\F_p[x]/(x^{n}-1)$. By a cyclic code we shall henceforth refer to an ideal in $\mathbb F_p[x]/(x^n-1)$.

Due to this special importance of the permutation module, the action of  $G\le S_n$  on a direct product $U^n$ should always be understood as the action via permutation of components. In particular, when $U=\mathbb{F}_p$ and $\textbf{v}\in U^n$, the module $\langle G\cdot \textbf{v}\rangle$ shall denote the submodule of the $\mathbb{F}_p[G]$-permutation module generated by $\bf{v}$.

\subsubsection*{Fiber products} Given groups $G_1, G_2, H$, and morphisms $\phi_1: G_1 \to H,\phi_2: G_2 \to H$, the \emph{fiber product} of $G_1$ and $G_2$ over $H$ is the subgroup of $G_1 \times G_2$ given by
$$
G_1 \times_H G_2 := \{ (g_1, g_2) \in G_1 \times G_2 \mid \phi_1(g_1) = \phi_2(g_2) \}.
$$

\subsection{Monodromy groups of indecomposable polynomials}
We recall the main known results on indecomposable polynomials and their monodromy groups.
The following essentially summarizes the classification results of \cite{Mul2} (together with some more elementary considerations regarding the possibilities for indecomposables with solvable monodromy group, cf.\ \cite[Lemma 2.9]{Mul}).

\begin{thm}
	\label{prop:primmon}
	Let $k$ be an algebraically closed field of characteristic $0$, and let $f\in k[X]$ be an indecomposable polynomial of degree $>1$. Then one of the following holds:
	\begin{enumerate}[label=\textbf{\arabic*.}, ref=\arabic*]
		\item $\Mon_k(f)$ is solvable, and moreover one of the following holds.
		\label{primmon_solv}
		\begin{enumerate}[label=\textbf{\alph*.}, ref=\alph*]
			\item  $f \sim X^p$ for a prime $p$. Here, the ramification type of $f$ is $([p], [p])$. 
			\label{primmon_xp}
			\item 
			\label{primmon_tp}
			$f\sim T_p$ for a prime $p\ge 3$. Here, the ramification type of $f$ is $([2^{(p-1)/2},1], [2^{(p-1)/2},1], [p])$. 
			\item $\deg(f)=4$ and $\Mon_k(f)=S_4$. Here either
			\label{primmon_s4}
			\begin{enumerate}[label=\textbf{\roman*.}, ref=\roman*]
				\item $f\sim X^3(X-1)$ is special (of  ramification type $([3,1], [2,1^2], [4])$), or
				\label{primmon_s4_special}
				\item $f\sim X^2(X^2+aX+a)$, for $a\in k\setminus\{0,\frac{32}{9},4\}$,\footnote{The exclusion of the three values $a=0$, $a=\frac{32}{9}$ and $a=4$ is due to the fact that these (and only these, as a quick discriminant calculation shows) decrease the number of branch points, with exactly the value $\frac{32}{9}$ leading to an indecomposable, namely as in Case \ref{primmon_solv}\ref{primmon_s4})\ref{primmon_s4_special}.} is generic (with ramification type $([2,1^2], [2,1^2], [2,1^2], [4])$).
				\label{primmon_s4_generic}
			\end{enumerate}
		\end{enumerate}
		\item \label{primmon_nonsolv}
		$\Mon_k(f)$ is a nonsolvable almost simple group, and moreover one of the following holds.
		\begin{enumerate}[label=\textbf{\alph*.}, ref=\alph*]
			\item \label{primmon_ansn}
			$\Mon_k(f)=S_n$ ($n\ge 5$) or $A_n$ ($n\ge 5$ odd), with many possible ramification types.
			\item $\Mon_k(f)\in \{\PGL_2(5), \PSL_3(2), \PGL_2(7), \PGaL_2(8), \PGaL_2(9), \PSL_2(11), M_{11}$,\\  $\PSL_3(3), \PSL_4(2), \PGaL_3(4), M_{23}, \PSL_5(2)\}$, of degrees $6,7,8,9,10,11,11,13,15,21,23$ and $31$, respectively. Here, all possible ramification types for $f$ are explicitly given in \cite[\S 2.2, Thm.]{Mul2}.
			\label{primmon_exc}
		\end{enumerate}
	\end{enumerate}
\end{thm}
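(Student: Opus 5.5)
The plan is to reduce everything to a statement about primitive permutation groups containing a full cycle, and then combine genus-$0$ Riemann--Hurwitz counting with the known classification. Let $n=\deg(f)$ and $G=\Mon_k(f)$. Since $k$ is algebraically closed of characteristic $0$, $f$ is indecomposable if and only if $G$ is primitive (Lüroth plus the Galois correspondence for intermediate fields of $k(x)/k(t)$). The place at infinity is totally ramified, so its inertia generator $\pi_\infty$ is an $n$-cycle. Applying \eqref{rh_formula} with $g_{F_1}=0$ gives
$$\sum_{P\in S}\mathrm{ind}(\pi_P)=2(n-1).$$
Since $\mathrm{ind}(\pi_\infty)=n-1$, the finite branch points contribute exactly $n-1$.

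By the theorems of Burnside and Schur, a primitive group containing a regular cyclic subgroup is either $2$-transitive or of prime degree $p$ with $C_p\le G\le \AGL_1(p)$. In the solvable case we use that a solvable $2$-transitive group is affine, of degree $q^d$. We then check that an affine primitive group containing a $q^d$-cycle forces $d=1$, or $q^d=4$ with $G=S_4$. This is the elementary argument of \cite[Lemma 2.9]{Mul}: a $q^d$-cycle in $\AGL_d(q)$ exists only for these small cases.

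For $G\le \AGL_1(p)$, every non-identity element fixes at most one point, and a non-translation of order $e$ has index $(p-1)(1-1/e)$, while a $p$-cycle has index $p-1$. The finite indices must sum to $p-1$, and together with $\pi_\infty$ they must generate a transitive group with product $1$. This leaves two options:
\begin{itemize}
\item a single finite branch point of type $[p]$, which gives $f\sim X^p$;
\item two finite branch points with $e=2$ (so $p\ge3$), which gives $G=D_p$ and type $([2^{(p-1)/2},1],[2^{(p-1)/2},1],[p])$.
\end{itemize}
In each case we identify $f$ up to linear equivalence by rigidity. Concretely, a genus-$0$ cover with a given branch cycle description is unique up to Möbius transformations once the branch points are fixed, and three branch points can be moved to $0,1,\infty$. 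Since $X^p$ and $T_p$ realize these data, $f\sim X^p$ or $f\sim T_p$.

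For $S_4$ in degree $4$, the finite indices must sum to $3$. The available indices are $1$ for a transposition, $2$ for a $3$-cycle or a double transposition, and $3$ for a $4$-cycle. A double transposition cannot occur: it would have to be paired with a transposition, and together with the $4$-cycle the product condition and generation of $S_4$ rule this out by a direct check. That leaves the types $([2,1^2]^3,[4])$ and $([3,1],[2,1^2],[4])$. For the special type, rigidity yields $X^3(X-1)$. For the generic type, we normalize $f=X^2(X^2+aX+a)$ by placing the $[4]$ point at infinity and a double root at $0$, with suitable scaling. A discriminant computation of $f'$ and of the critical values then shows exactly which values of $a$ collapse branch points: $a=0$ and $a=4$ (decomposable or fewer branch points), and $a=32/9$ (the special polynomial).

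The nonsolvable case is where the real content lies, and it is the main obstacle. We would not reprove it. Instead we invoke the CFSG-based classification of primitive groups containing a full cycle (Feit, Jones), which gives affine, $A_n$/$S_n$, $\PGaL$-type projective groups, $\PSL_2(11)$, $M_{11}$ and $M_{23}$. We then rely on Müller's case-by-case genus-$0$ analysis \cite[\S2.2]{Mul2}, which shows which of these actually occur with the index sum $n-1$ over finite points. In particular the projective families survive only for the listed small degrees, and $A_n$ survives only for odd $n$, since the $n$-cycle is even only when $n$ is odd. The theorem is then the union of these cases.
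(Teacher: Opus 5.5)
Your proposal is correct and takes essentially the same route as the paper. The paper gives no separate argument: it presents the theorem as a summary of Müller's classification \cite{Mul2}, with the solvable case covered by the elementary considerations of \cite[Lemma 2.9]{Mul}. Your proposal makes the same split, simply spelling out the solvable case explicitly (Burnside--Schur reduction, the full-cycle restriction on affine groups, the Riemann--Hurwitz index count, and rigidity) and deferring the nonsolvable case to the same sources.
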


\subsection{Kernel estimates}\label{sec:linalg}
In this section we collect various auxiliary results which will prove useful when lower-bounding the kernel $G\cap \ker(\pi)$ for transitive groups $G\le U\wr V$ (where $\pi:U\wr V\to V$ is the projection). In particular these hold in the various cases where $U$ is one of the solvable primitive monodromy groups occurring in Theorem \ref{prop:primmon}.

\subsubsection{On groups $G\le U\wr V$ with $U$ cyclic}
\label{sec:uv_ucyclic}

The following results are useful to obtain ``large kernel" conclusions for subgroups of $C_p\wr C_q (=\mathbb{F}_p^q\rtimes C_q)$, $C_p\wr D_q$ or $C_p\wr S_4$, under the assumption that the kernel contains a certain ``short vector".

\begin{lem} %Applied in Proof of Thm. 2.1
	\label{lem:Szpee}
	Let $p,q$ be primes with $p\not\equiv 1 \bmod q$, and let ${\bf v} = {\bf e}_i + \mu {\bf e}_j\in \mathbb{F}_p^q$, with $i\ne j$ and with $\mu\in \mathbb{F}_p^\times$.
	Then $\langle C_q\cdot {\bf v}\rangle = \begin{cases}{\rm{Aug}}(\mathbb{F}_p^q), \text{ if } \mu=-1,\\ \mathbb{F}_p^q, \textit{ otherwise}.\end{cases}$
\end{lem}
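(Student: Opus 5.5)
The plan is to use the cyclic-code description from the end of the setup. Identify the $\mathbb{F}_p[C_q]$-permutation module $\mathbb{F}_p^q$ with $R=\mathbb{F}_p[x]/(x^q-1)$, where the generator of $C_q$ acts as multiplication by $x$ and ${\bf e}_i$ corresponds to $x^{i-1}$. Submodules of $\mathbb{F}_p^q$ are then exactly the ideals of $R$. Multiplying ${\bf v}$ by a power of $x$, which is a unit, we may assume $i=1$. Then ${\bf v}$ corresponds to $1+\mu x^{d}$ with $d=j-i\not\equiv 0\bmod q$, and $\langle C_q\cdot{\bf v}\rangle$ is the principal ideal generated by $1+\mu x^d$.

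Since $R$ is a principal ideal ring, the ideal $(a(x))$ equals $(\gcd(a(x),x^q-1))$. The task is therefore to compute $\gcd(1+\mu x^d,\,x^q-1)$ over $\mathbb{F}_p$. We split by whether $p=q$ or $p\ne q$.

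If $p\neq q$, then $x^q-1$ is separable and factors as $(x-1)\Phi_q(x)$. The roots of $\Phi_q$ are primitive $q$-th roots of unity, and they lie in $\mathbb{F}_{p^r}$ where $r$ is the order of $p$ mod $q$. The hypothesis $p\not\equiv1\bmod q$ gives $r>1$, so no primitive $q$-th root of unity lies in $\mathbb{F}_p$.

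Suppose $\zeta$ is a primitive $q$-th root of unity with $1+\mu\zeta^d=0$. Then $\zeta^d=-\mu^{-1}\in\mathbb{F}_p$. Since $\gcd(d,q)=1$, $\zeta^d$ is again a primitive $q$-th root of unity, so it cannot lie in $\mathbb{F}_p$, a contradiction. Hence only the root $1$ can be shared. It is a root of $1+\mu x^d$ iff $\mu=-1$.

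So the gcd is $x-1$ when $\mu=-1$ and $1$ otherwise. The ideal $(x-1)$ corresponds exactly to ${\rm Aug}(\mathbb{F}_p^q)$: it consists of vectors with coordinate sum $0$, has dimension $q-1$, and is the kernel of evaluation at $1$.

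If $p=q$, then $x^q-1=(x-1)^q$, so only the root $1$ matters, and the same dichotomy gives the ideal $(x-1)$ or $R$. Note that $p\not\equiv 1 \bmod q$ holds automatically here.

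The only step needing care is the remark that $\zeta^d$ remains a primitive $q$-th root of unity outside $\mathbb{F}_p$; this is the one place the hypothesis $p\not\equiv1\bmod q$ enters. Everything else is routine manipulation of cyclic codes.
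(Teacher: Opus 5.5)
Your proof is correct and is essentially the paper's argument: you identify $\langle C_q\cdot{\bf v}\rangle$ with the cyclic code generated by $\gcd(x^q-1,\,1+\mu x^d)$ and rule out primitive $q$-th roots of unity as common roots. The one step you gloss over is the case $p=q$, where you must check that $1$ is a \emph{simple} root of $1-x^d$; this holds because $0<d<p$ makes $x^d-1$ separable, and without it the gcd with $(x-1)^p$ could be a higher power of $x-1$ (the paper covers this by noting that one of $x^q-1$ and $1+\mu x^d$ is always separable).
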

\begin{proof} Due to transitivity of $C_q$ in the permutation action, we may assume $j=0$ without loss of generality.
	The cyclic code $\langle C_q\cdot {\bf v}\rangle$ thus has generator polynomial $\gcd(X^q-1, X^i+\mu)$. Any root of $X^i+\mu$ in $\overline{\mathbb{F}}_p$ has multiplicative order dividing $i(p-1)$, and since $i<q$ and $p\not\equiv 1\bmod q$, this order is coprime to $q$. Hence, $\gcd(X^q-1, X^i+\mu)$ cannot have any irreducible factor apart from $X-1$, which also cannot be a multiple factor since one of $X^q-1$ and $X^i+\mu$ is separable. Clearly $X-1 \divides X^i+\mu$ if and only if $\mu=-1$. The assertion now follows via noting that the polynomial $X-1$ generates the augmentation ideal. 
\end{proof}

The next lemma gives a similar conclusion for cyclic groups $C_q$ not necessarily of prime order.
\begin{lem}
	\label{lem:Szodd}%Applied in Proof of Prop.6.2, but might want to develop there directly
	Let $p$ be a prime, and $q\ge 2$ an odd integer.  If $p=2$ assume further $q$ is prime.
	For ${\bf v}={\bf e}_j + {\bf e}_{i+j}\in \mathbb{F}_p^q$, $i\ne 0$, one has:
	$$\langle C_q \cdot {\bf v} \rangle=\begin{cases}
		{\rm Aug}(C_2^{q}), &\text{ if } p=2,\\
		\mathbb{F}_p^q, &\text{ otherwise}.\end{cases}$$
\end{lem}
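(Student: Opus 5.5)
The plan is to mimic the proof of Lemma \ref{lem:Szpee} and work in the cyclic code picture, identifying $\mathbb{F}_p^q$ with $\mathbb{F}_p[X]/(X^q-1)$ so that ${\bf e}_k\mapsto X^k$. Since $C_q$ acts transitively, I will first translate so that $j=0$. Then $\langle C_q\cdot{\bf v}\rangle$ is the ideal generated by $1+X^i$, and since $\mathbb{F}_p[X]/(X^q-1)$ is a principal ideal ring, this ideal is generated by $d(X):=\gcd(X^q-1,X^i+1)$. The whole problem therefore reduces to computing $d$.

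For $p$ odd, suppose $\zeta\in\overline{\mathbb{F}}_p$ is a common root. Then $\zeta^q=1$, so the order $m$ of $\zeta$ is odd, since $q$ is odd. We also have $\zeta^i=-1$, hence $\zeta^{2i}=1$, so $m\mid 2i$ and thus $m\mid i$, because $m$ is odd. But then $\zeta^i=1\neq-1$, since $p\neq 2$, which is a contradiction. So $d=1$ and the code is all of $\mathbb{F}_p^q$. Here the hypothesis that $q$ is prime is not needed; the argument only uses that $q$ is odd.

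For $p=2$, we have $X^i+1=X^i-1$, so $d=X^{\gcd(i,q)}-1$. Since $q$ is prime and $0<i<q$, this gives $\gcd(i,q)=1$, so $d=X-1$. The ideal $(X-1)$ is exactly the augmentation ideal ${\rm Aug}$, because ${\bf v}$ has even weight and the ideal has codimension $1$. One small check is whether $X^q-1$ is separable when $p=2$. It is, since $q$ is odd, but separability is not actually needed, because the gcd computation is exact regardless.

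The only delicate step is the odd-$p$ order argument, specifically making sure that $-1\neq1$ is what rules out a common root. Beyond that, the proof reduces to the principal-ideal-ring description of cyclic codes and needs no real computation.
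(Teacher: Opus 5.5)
Your proof is correct and follows the paper's argument: both reduce to the generator polynomial $\gcd(X^q-1,X^i+1)$ of the cyclic code and find that it equals $1$ for odd $p$ and $X+1$ for $p=2$. You additionally supply the justifications the paper leaves implicit, namely the order argument ruling out a common root for odd $p$ and the identity $\gcd(X^q-1,X^i-1)=X^{\gcd(q,i)}-1$ for $p=2$.
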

\begin{proof}
	The subgroup $\langle C_q {\cdot} {\bf v}\rangle$ corresponds to the cyclic $\mathbb{F}_p$-code of length $q$ with generator polynomial 
	$$\gcd(X^q-1, X^i+1) = 
	\begin{cases} X+1, &\text{ if }  p=2,\\
		1, &\text{ else}.
	\end{cases}$$
	In the former case, $\langle C_q{\cdot} {\bf v}\rangle={\rm Aug}(C_2^q)$. In the latter case,  $\langle C_q{\cdot} {\bf v}\rangle = \mathbb{F}_p^q$. 
\end{proof}

The next results are useful for analyzing groups $G\le U\wr V$ with $V$ dihedral, resp., $V=S_4$.
\begin{lem} %Applied in proof of "support-3" Corollary, and in proof of Thm.2.1. and Thm.2.3.
	\label{lem:ffkf_new}
	Let $q\ge 5$ be a prime, and ${\bf v}\in \mathbb{F}_2^q$ a vector such that $\supp({\bf v})$ is of size $4$ and invariant under some reflection of $D_q$. Then $\langle C_q\cdot {\bf v}\rangle = {\rm{Aug}}(\mathbb{F}_2^q)$.
\end{lem}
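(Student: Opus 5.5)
We translate to cyclic codes: identify $\mathbb{F}_2^q$ with $R=\mathbb{F}_2[x]/(x^q-1)$, so that $\langle C_q\cdot{\bf v}\rangle$ is the ideal generated by the polynomial $v(x)=\sum_{i\in\supp({\bf v})}x^i$. This ideal is the code with generator polynomial $\gcd(x^q-1,v(x))$. Since $|\supp({\bf v})|=4$ is even, $x-1$ divides $v(x)$, so the ideal lies in ${\rm Aug}(\mathbb{F}_2^q)$, which is the ideal generated by $x-1$. Because $q$ is odd, $x^q-1$ is separable over $\mathbb{F}_2$. It therefore suffices to show that no irreducible factor of $\Phi_q(x)=(x^q-1)/(x-1)$ divides $v(x)$. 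Equivalently, we must show $v(\zeta)\neq 0$ for a primitive $q$-th root of unity $\zeta\in\overline{\mathbb{F}}_2$; the Galois conjugates of $\zeta$ are then covered by the same argument applied to $\zeta^{2^j}$.

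Next we use the reflection symmetry to normalise. Rotating (applying a power of $x$) preserves the ideal, so we may assume the reflection fixing the support is $i\mapsto -i \bmod q$. Since $q$ is odd, this reflection has a unique fixed point $0$, and its other orbits are pairs $\{a,-a\}$. A set of size $4$ invariant under it must then be a union of two such pairs, since $0$ together with one pair gives only size $3$. Hence $\supp({\bf v})=\{a,-a,b,-b\}$ with $a,b\neq0$ and $a\neq\pm b$. Writing $y=\zeta+\zeta^{-1}$, we get $v(\zeta)=\zeta^a+\zeta^{-a}+\zeta^b+\zeta^{-b}=D_a(y)+D_b(y)$, where $D_n$ denotes the Dickson/Chebyshev polynomial with $D_n(z+z^{-1})=z^n+z^{-n}$.

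The main step is to show $\zeta^a+\zeta^{-a}\neq\zeta^b+\zeta^{-b}$. Suppose instead that $\alpha:=\zeta^a+\zeta^{-a}$ equals $\zeta^b+\zeta^{-b}$. Both $\zeta^{a}$ and $\zeta^{b}$ are then roots of $z^2-\alpha z+1$, whose roots are $\zeta^{a}$ and $\zeta^{-a}$. So $\zeta^b\in\{\zeta^a,\zeta^{-a}\}$, i.e.\ $b\equiv\pm a\pmod q$ because $\zeta$ has order exactly $q$. This contradicts $a\neq\pm b$. Note that characteristic $2$ causes no trouble here, since the quadratic $z^2-\alpha z+1$ still has at most two roots.

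The same argument applies to every conjugate $\zeta^{2^j}$, which is again a primitive $q$-th root of unity. Hence $\gcd(x^q-1,v(x))=x-1$, and the code equals ${\rm Aug}(\mathbb{F}_2^q)$. The hypothesis $q\ge5$ is needed only so that two distinct pairs $\{a,-a\}$ and $\{b,-b\}$ exist. The only points needing care are the normalisation of the reflection to $i\mapsto -i$ via rotation and the parity argument forcing $0\notin\supp({\bf v})$; beyond that the proof is short.
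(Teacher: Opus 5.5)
Your proof is correct and is essentially the paper's argument: both show that the generator polynomial $\gcd(x^q-1,v(x))$ is $x-1$, and your quadratic root-counting step is the paper's factorization in disguise, since after your normalization $x^{a}v(x)=(1+x^{a+b})(1+x^{a-b})$ in $\mathbb{F}_2[x]/(x^q-1)$ with $a\pm b\not\equiv 0 \pmod q$. One wording point: you need $v(\zeta)\neq 0$ for \emph{every} primitive $q$-th root of unity, not only for the conjugates $\zeta^{2^j}$ of one fixed $\zeta$, because $\Phi_q$ may be reducible over $\mathbb{F}_2$; your argument only uses that $\zeta$ has order $q$, so it already covers this.
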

\begin{proof}
	Invariance under a reflection implies that $\supp({\bf v})$ is of the form $\{i_0, i_0+i, j,j+i\}$ for suitable $i_0,i,j\in \mathbb{F}_q$. We may assume $i_0=0$ and $0<i<j<q$ without loss of generality.
	The subgroup $\langle C_q\cdot {\bf v}\rangle$ thus corresponds to a cyclic code with a generator polynomial $\gcd(X^q-1, 1+X^i+X^j+X^{j+i}) = \gcd(X^q-1, (X^j-1)(X^i-1)) $ which is $X-1$ since $i,j$ are coprime\footnote{For odd $i$ coprime to $q$, any $q$-th root of unity in $\oline{\mathbb F}_2$ which is also an $i$-th root of unity is $1$. The claim then follows when noting that $2$-powers in $i$ only affect multiplicity while $x^q-1$ is square free.} to $q$. Hence, $\langle C_q\cdot {\bf v}\rangle = \textrm{Aug}(\mathbb{F}_2^q)$ as claimed.
\end{proof}

\begin{cor}
	\label{cor:dp_smallweight}
	Let $q\ge 5$ be a prime, and let ${\bf v} \in \F_2^q$ with $|\supp({\bf v})| = 3$. Then $\langle D_q \cdot {\bf v} \rangle = \F_2^q$.
\end{cor}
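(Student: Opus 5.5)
Let $W=\langle D_q\cdot{\bf v}\rangle$. The plan is to first show $W\supseteq \mathrm{Aug}(\F_2^q)$, and then note that ${\bf v}$ has odd weight. Since $\mathrm{Aug}(\F_2^q)$ has index $2$ in $\F_2^q$ and ${\bf v}\notin \mathrm{Aug}(\F_2^q)$, this gives $W=\F_2^q$. Thus the whole task is to produce a vector in $W$ to which Lemma \ref{lem:ffkf_new} applies, namely a vector of support size $4$ invariant under a reflection.

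To build such a vector, write $\supp({\bf v})=\{a,b,c\}\subset\F_q$ and let $\rho\in D_q$ be the reflection $x\mapsto 2a-x$, which fixes $a$. Then ${\bf w}:={\bf v}+\rho{\bf v}$ lies in $W$. Its support is the symmetric difference of $\{a,b,c\}$ and $\{a,2a-b,2a-c\}$, so $a$ cancels. Since $q$ is odd, $2a-b\ne b$ and $2a-c\ne c$. Moreover $2a-b=c$ holds if and only if $a$ is the midpoint of $b$ and $c$.

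The main obstacle is guaranteeing a support of exactly $4$, i.e.\ avoiding this coincidence. We must choose $a$ among the three support points so that it is not the midpoint of the other two. In $\F_q$ with $q\ge5$, at most one of the three points can be the midpoint of the other two. Indeed, if $a=(b+c)/2$ and also $b=(a+c)/2$, then subtracting gives $a-b=(b-a)/2$, hence $3(a-b)=0$, which forces $a=b$ because $q\neq3$.

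So we pick $a$ not a midpoint. Then $\supp({\bf w})=\{b,c,2a-b,2a-c\}$ consists of four distinct points, since $2a-b\neq c$ and $2a-c\neq b$ by the choice of $a$, and the remaining coincidences were excluded above. This set is $\rho$-invariant by construction, because ${\bf w}$ itself is $\rho$-invariant. Lemma \ref{lem:ffkf_new} then yields $\mathrm{Aug}(\F_2^q)=\langle C_q\cdot{\bf w}\rangle\subseteq W$, and the odd-weight vector ${\bf v}$ completes $W$ to all of $\F_2^q$.
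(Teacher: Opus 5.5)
Your proof is correct and matches the paper's: the paper likewise normalizes a support point to $0$ and chooses it so that it is not the midpoint of the other two ($j-i\neq k-j$). It then adds to ${\bf v}$ its image under the reflection $d\mapsto -d$, applies Lemma \ref{lem:ffkf_new} to the resulting weight-$4$ reflection-invariant vector, and concludes from ${\bf v}\notin\mathrm{Aug}(\F_2^q)$. Your explicit checks that at most one point can be such a midpoint (using $q\neq 3$) and that the four support points are distinct spell out details the paper leaves implicit.
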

\begin{proof}
	Write $\supp({\bf v}) =\{i,j,k\}\subset \mathbb{F}_q$. Since $q>3$, we may assume (after reordering) that $j-i\ne k-j$. Without loss of generality, we may assume $j=0$. Let $\sigma\in D_q$ be the reflection of coordinates $d\mapsto -d$, $d\in \mathbb{F}_q$. 
	Then 
	$$
	{\bf w}:={\bf v} + \sigma\cdot {\bf v}={\bf e}_i+{\bf e}_{-i}+{\bf e}_{k}+{\bf e}_{-k}
	$$  has a $\sigma$-invariant support. Lemma \ref{lem:ffkf_new} implies that $\langle C_q \cdot {\bf w}\rangle$ contains ${\textrm{Aug}}(\mathbb{F}_2^q)$. Moreover, as $\langle D_q \cdot {\bf v}\rangle\supseteq \langle C_q \cdot {\bf w}\rangle$ and ${\bf v} \notin {\rm Aug}(\F_2^q)$, it follows that $\langle D_q \cdot {\bf v} \rangle=\F_2^q$.
\end{proof}

\begin{lem}\label{lem:havzero}
	Let ${\bf v} \in \F_p^4$ with $1 \leq |\supp({\bf v})| \leq 3$. Then $\langle S_4 \cdot {\bf v} \rangle$ contains ${\rm Aug}(\F_p^4)$.
\end{lem}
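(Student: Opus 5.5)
The plan is to produce, inside the submodule $M:=\langle S_4\cdot{\bf v}\rangle$, a vector of the form ${\bf e}_i-{\bf e}_j$ with $i\neq j$. Since $S_4$ acts $2$-transitively on $\{1,2,3,4\}$, the $S_4$-orbit of such a vector consists of all $\pm({\bf e}_a-{\bf e}_b)$. These vectors span ${\rm Aug}(\F_p^4)$, so this would prove the lemma. Throughout, $M$ is closed under $S_4$ and under $\F_p$-linear combinations. The proof splits according to $s:=|\supp({\bf v})|\in\{1,2,3\}$.

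\textbf{Case $s=1$.} Here ${\bf v}=c\,{\bf e}_i$ with $c\ne 0$, so ${\bf e}_i\in M$. By transitivity every ${\bf e}_j$ lies in $M$, hence $M=\F_p^4$.

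\textbf{Case $s=2$.} Write ${\bf v}=a{\bf e}_1+b{\bf e}_2$ with $a,b\neq0$, after relabeling. Apply the transposition $\tau=(2\,3)$; it fixes the first coordinate, so
\[
{\bf v}-\tau{\bf v}=b({\bf e}_2-{\bf e}_3)\in M.
\]
Dividing by $b$ gives ${\bf e}_2-{\bf e}_3\in M$, which finishes this case.

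\textbf{Case $s=3$.} Write ${\bf v}=a{\bf e}_1+b{\bf e}_2+c{\bf e}_3$ with all coefficients nonzero. The key point is that the fourth coordinate is zero. Apply the transposition $(3\,4)$, which fixes coordinates $1$ and $2$:
\[
{\bf v}-(3\,4){\bf v}=c({\bf e}_3-{\bf e}_4)\in M,
\]
with $c\neq0$. This is again the desired difference vector. So the zero coordinate removes any need to consider the relations among $a,b,c$ or the characteristic $p$.

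No real obstacle is expected. The only step needing care is the uniform observation: if a coordinate $j$ of ${\bf v}$ vanishes and a coordinate $i$ does not, then ${\bf v}-(i\,j){\bf v}=v_i({\bf e}_i-{\bf e}_j)$. This single argument in fact covers all three cases at once, since $s\le 3$ guarantees a vanishing coordinate and $s\ge1$ guarantees a nonvanishing one. Together with the fact that the differences ${\bf e}_i-{\bf e}_j$ generate ${\rm Aug}(\F_p^4)$, this gives the lemma.
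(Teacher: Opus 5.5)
Your proposal is correct and is essentially the paper's own argument. The paper likewise picks a nonzero coordinate $v_1$ and a vanishing coordinate $v_2$, notes that $({\bf v}-(1,2)\cdot{\bf v})/v_1={\bf e}_1-{\bf e}_2\in\langle S_4\cdot{\bf v}\rangle$, and concludes from this; your uniform observation at the end is that argument, and the three-case split is unnecessary but harmless.
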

\begin{proof}
	Write $v=(v_1,\ldots,v_4)$. Without loss of generality, we may assume that $v_1 \neq 0$ and $v_2=0$. Then
	$$
	{\bf e}_1-{\bf e}_2=\frac{{\bf v}-(1,2)\cdot {\bf v}}{v_1} \in \langle S_4 \cdot {\bf v} \rangle, 
	$$
	which implies that ${\rm Aug}(\F_p^4) \leq \langle S_4 \cdot {\bf v} \rangle$.
\end{proof}
We shall also use the fact that monodromy groups of polynomials necessarily contain a cyclic transitive subgroup  to  obtain lower bounds for kernels.
\begin{lem}\label{lem:fullcyindiag}
	Let $\sigma$ be an $mn$-cycle in $C_n \wr S_m$ with $m,n \geq 2$. Then $\sigma^m\in {\rm diag}(C_n^m)\setminus\{1\}$.
\end{lem}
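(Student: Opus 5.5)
Write $\sigma=((\sigma_1,\dots,\sigma_m),\tau)$ with $\sigma_i\in C_n$ and $\tau=\pi(\sigma)\in S_m$. The plan is to apply Lemma \ref{lem:direct} to $\sigma^m$ and read off its components.

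First, I will show that $\tau$ is an $m$-cycle. The orbits of $\langle\sigma\rangle$ on the $mn$ points map onto the orbits of $\langle\tau\rangle$ on the $m$ blocks. Since $\sigma$ is an $mn$-cycle, it is transitive on all points, so $\tau$ is transitive on the blocks. A transitive cyclic subgroup of $S_m$ is generated by an $m$-cycle, so $\tau$ has order exactly $m$.

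Next, Lemma \ref{lem:direct} with $k=m$ gives $\sigma^m\in\ker(\pi)$, with $i$-th component
$$\prod_{j=0}^{m-1}\sigma_{\tau^{-j}(i)}.$$
Because $\tau$ is an $m$-cycle, the indices $\tau^{-j}(i)$ for $j=0,\dots,m-1$ run over all of $\{1,\dots,m\}$. Since $C_n$ is abelian, each component therefore equals $\sigma_1\sigma_2\cdots\sigma_m$, independent of $i$. Hence $\sigma^m\in{\rm diag}(C_n^m)$.

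Finally, $\sigma^m\ne 1$ because $\sigma$ has order $mn>m$, as $n\ge 2$. This proves the lemma. The only substantive point is using commutativity of $C_n$ to see that the components coincide; no step presents a real obstacle.
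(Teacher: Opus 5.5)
Your proof is correct and follows the paper's argument: write $\sigma=((a_1,\dots,a_m),\tau)$ with $\tau$ an $m$-cycle, apply Lemma \ref{lem:direct} with $k=m$ so that every coordinate of $\sigma^m$ equals $\prod_i a_i$, and deduce nontriviality from $\sigma$ being an $mn$-cycle. You additionally justify why $\tau$ must be an $m$-cycle and make explicit the use of commutativity of $C_n$, both of which the paper leaves implicit.
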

\begin{proof}
	Write $\sigma=((a_1,\dots,a_m),\tau)$, where $a_i \in C_n$ and $\tau\in S_m$ is an $m$-cycle. By  Lemma \ref{lem:direct},
	$$
	\sigma^m=\left(\prod_{i=1}^ma_i,\dots,\prod_{i=1}^ma_i\right) \in {\rm diag}(C_n^m).
	$$ Since $\sigma$ is an $mn$-cycle, $\sigma^m$ is nontrivial.
\end{proof}

\subsubsection{On groups $G\le U\wr V$ with $U$ non-cyclic}
\label{sec:uv_unoncyclic}
The following is useful to obtain ``large kernel" conclusions for the case of subgroups $G\le U\wr V$ with certain {\it noncyclic} groups $U$, and notably when $U$ is a noncyclic solvable monodromy group of an indecomposable polynomial. 
\begin{lem}\label{lem:normal}
	Let $d \geq 3$.
	\begin{enumerate}
		\item
		Let $q$ be an odd prime, $G\le \AGL_1(q)\wr S_d$ a transitive subgroup, and $K:=\ker(G\to S_d)$. Assume that there exist $s\geq 1$ elements $x_1,\dots, x_s\in K$  of order coprime to $q$ such that $|\bigcap_{j=1}^s\supp(x_j)| = 1$.
		Then $K$ contains $C_q^d$ as a minimal normal subgroup.
		\item Let $G\le S_4\wr S_d$ be a transitive subgroup, and $K:=\ker(G\to S_d)$.
		Assume  that there exist $s\geq 1$ elements $x_1,\dots, x_s\in K$  of order $3$ such that $|\bigcap_{j=1}^s\supp(x_j)| = 1$.
		Then $K$ contains $V_4^{d}$ as a minimal normal subgroup. 
	\end{enumerate}
\end{lem}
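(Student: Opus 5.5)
The plan is to treat both parts with the same mechanism: produce an element of $K$ supported on exactly one block, whose coordinate there generates the socle of the block group. Transitivity of $G$ then moves this element to every block, giving the full direct power.

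\textbf{Step 1 (a single-support element).} In part (1), each $x_j\in K\le \AGL_1(q)^d$ has order coprime to $q$. Its nontrivial coordinates are therefore nontrivial non-translation elements of $\AGL_1(q)$, i.e.\ elements $y\mapsto ay+b$ with $a\ne 1$. In part (2), each coordinate of $x_j$ is trivial or a $3$-cycle in $S_4$. Suppose $\bigcap_j\supp(x_j)=\{i_0\}$. For $x\in K$ and $n\in N$, where $N$ is the socle power ($C_q^d$, resp.\ $V_4^d$), the commutator $[x,n]$ has $i$-th coordinate $[x_i,n_i]$. This is trivial whenever $x_i=1$. When $x_i\ne 1$ it ranges over all of $C_q$ (resp.\ $V_4$) as $n_i$ varies, because the fixed-point-free action of $a\ne 1$ on $C_q$ (resp.\ of a $3$-cycle on $V_4$) makes $n_i\mapsto [x_i,n_i]$ surjective.

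\textbf{Step 2 (the main obstacle).} A priori we do not know that $K\cap N$ is large, so we cannot pick $n$ freely inside $K$. I would first get a nontrivial element of $K\cap N$ from commutators inside $K$. If $x_i$ and $x_i^g$ are two non-translations in $\AGL_1(q)$ with distinct fixed points, then $[x_i,x_i^g]$ is a nontrivial translation. Conjugating $x_j$ by elements of $K$ therefore yields elements of $K\cap C_q^d$. The same works in $S_4$: commutators of $3$-cycles lie in $A_4$, and suitable ones lie in $V_4$.

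Once $M:=K\cap N\ne 1$ is known, $M$ is a $G$-invariant subgroup, hence an $\F_q[G]$- (resp.\ $\F_2[G]$-) submodule of $N$, and it is normalised by $x_1,\dots,x_s$. Iterating $m\mapsto [x_1,[x_2,\dots,[x_s,m]]]$ kills every coordinate outside $\bigcap_j\supp(x_j)=\{i_0\}$. To make sure the result is nontrivial, I would choose $m\in M$ with $m_{i_0}\ne 1$; this is possible by transitivity, since $M\ne 1$ has some nonzero coordinate and we can conjugate it to $i_0$. Then $[x_j,\cdot]$ is injective on the $i_0$-coordinate because $x_{j,i_0}$ acts fixed-point-freely. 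So the iterated commutator is a nontrivial element of $M$ supported exactly at $\{i_0\}$.

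For $V_4$, the $3$-cycle acts irreducibly on $V_4$. Hence the $\langle x_{j,i_0}\rangle$-translates of this element span $V_4$ in coordinate $i_0$, and $M$ contains $V_4\times 1^{d-1}$. For $C_q$, a single nonzero element already spans $C_q$.

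\textbf{Step 3 (conclusion).} Transitivity of $G$ on the $d$ blocks moves this block to every other block, so $M=N$. Thus $K\supseteq C_q^d$ (resp.\ $V_4^d$).

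For minimality, $N$ is normal in $K$, since it is characteristic-like as the intersection with the product of block socles. Any $G$-invariant nontrivial subgroup of $N$ contains a single-support element by the Step 2 argument, and hence all of $N$. If "minimal normal" is meant in the sense of $K$ itself, I would run the same Step 2 argument inside an arbitrary nontrivial $K$-normal subgroup $L\le N$, which only requires conjugation by the $x_j\in K$. This yields a single-support element, and Step 3 then uses the $x_j$ and transitivity to conclude $L=N$. That last point, whether normality under $K$ alone suffices to spread across blocks, is where I would need care; I expect the intended meaning to be minimal normal in $G$.

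The genuinely delicate step is Step 2, producing a nontrivial element of $K\cap N$. I would try the commutator of $x_1$ with a $K$-conjugate having a different fixed point at some coordinate, and fall back on the observation that $K$ projects onto a transitive subgroup of $\AGL_1(q)$ (resp.\ of $S_4$) on each block.
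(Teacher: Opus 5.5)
Your proposal is correct and follows essentially the same route as the paper's proof via Lemma \ref{lem:fullsocle}. Iterated commutators with $x_1,\dots,x_s$ shrink the support to $\bigcap_j\supp(x_j)=\{i_0\}$, the fixed-point-free action of each $(x_j)_{i_0}$ on the block socle keeps that coordinate nonzero, and conjugation plus block-transitivity of $G$ then fill out $N$; rerunning this from an arbitrary nonzero element gives minimality. Minimal normality in $G$ is indeed the intended reading, and the only correct one, since e.g.\ $C_q\times 1^{d-1}$ is normal in $K=\AGL_1(q)^d$.

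Your only deviation is the first step, where you obtain $1\ne[x_1,x_1^g]\in K\cap N$ because the block quotient is abelian (resp.\ because the images of $c$ and $c^g$ in $S_4/V_4$ commute). The paper's Claim 1 instead commutes an element with nonzero $W$-coordinate against conjugates of the $x_k$. Both routes need that the projection of $K$ to a block contains the socle, and you should justify this explicitly: that projection is a nontrivial normal subgroup of the block image, which is transitive because $G$ is transitive on points.
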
 

Lemma \ref{lem:normal} is a special case of the following more general statement, which is of interest in its own right, even though not used in full generality in this paper.

\begin{lem}
	\label{lem:fullsocle}
	Let $H=W.U$,
	where $W$ is a faithful $\mathbb{F}_p[U]$-module.
	Let $G\le H\wr S_d$ have transitive image under $\pi:G\to S_d$. Let $\Gamma=\ker(\pi) = G\cap H^d$ and $\Delta  = \Gamma\cap W^d$. Furthermore, let $\rho:\Gamma\to \GL(W)^d$ be the  action of $\Gamma\leq H^d$ on $W^d$. Assume all of the following:
	\begin{itemize}
		\item[i)] The image of $\Gamma$
		under projection to (any) $H$-component is $W.U'$ for  $U'\le U$ such that  $W=W_1\oplus\dots \oplus W_r$ for irreducible $\mathbb{F}_p[U']$-modules $W_j$, $j=1,\dots, r$. \\ Let $\rho_j:\Gamma\to \GL(W_j)^d$ be the  action of $\Gamma\leq H^d$ on $W_j^d$.
		\item[ii)] \label{it:support}
		There exist $s\ge 1$ elements $x_1,\dots,x_s\in \Gamma$ such that the supports of $\rho(x_1),\ldots,\rho(x_s)$ admit an intersection $\bigcap_{i=1}^s\supp(\rho(x_i))$ of size $1$.  Let $J$ be the set of all $j$ for which the intersection $\bigcap_{m=1}^s\supp(\rho_j(x_m))$ is nonempty (and hence of size $1$).
	\end{itemize} 
	Then $\Delta\supseteq W'^d$, where $W':= \bigoplus_{j\in J} W_j$. Moreover, if $r=1$, then $W^d$ is a minimal normal subgroup of $G$.
\end{lem}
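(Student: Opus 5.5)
Let $i_0$ be the unique element of $\bigcap_{m}\supp(\rho(x_m))$. The plan is to produce a nonzero element of $\Delta$ supported only at the coordinate $i_0$, and then spread it to all of $W'^d$ using transitivity together with the irreducibility hypothesis i). Throughout, regard $\Delta$ as an $\mathbb{F}_p[\Gamma]$-submodule of $W^d$, where $\Gamma$ acts through $\rho$ and $W^d\cap\Gamma=\Delta$ is normal in $G$.

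\textbf{Step 1: commutators shrink supports.} For $x\in\Gamma$ and $w\in\Delta$, the element $[x,w]=x w x^{-1}w^{-1}$ lies in $\Delta$ and equals $(\rho(x)-1)w$ componentwise. Its support is therefore contained in $\supp(\rho(x))\cap\supp(w)$. I would start from a vector $w\in\Delta$ whose $i_0$-component is nonzero and lies in a chosen $W_j$-part, and apply $(\rho(x_1)-1),\dots,(\rho(x_s)-1)$ in turn. The support then ends up inside $\bigcap_m\supp(\rho(x_m))=\{i_0\}$.

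The main obstacle is keeping the $i_0$-component nonzero along the way. To handle this, decompose into the $W_j$: for $j\in J$, the element $\rho_j(x_m)$ acts nontrivially on the $i_0$-th copy of $W_j$ for every $m$. Since $W_j$ is an irreducible module for the projected group $U'$, and the projection of $\Gamma$ to the $i_0$-component is $W.U'$, one can conjugate $w$ by elements of $\Gamma$ so that its $i_0$-component in $W_j$ runs over a spanning set of $W_j$. Hence some element of the submodule generated by $w$ is not killed by $\rho_j(x_1)-1$ at $i_0$. Iterating over $m$, and using that $\mathbb{F}_p[\Gamma]$-submodules are stable under this process, one gets a nonzero element of $\Delta\cap W_j$ supported exactly at $i_0$.

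This presupposes that $\Delta$ projects nontrivially into each $W_j$ at $i_0$. That holds because the component projection of $\Gamma$ is $W.U'$, so the projection of $\Delta$ is a nonzero $U'$-submodule of $W$. This point needs care, since $\Delta$ might a priori project to zero. I would argue that $\Gamma\cap W^d$ projects onto $W$ by a Goursat-type argument combined with the faithfulness of $W$; if that fails, I would instead take the subgroup generated by the $x_m$-commutators with lifts of elements of $W$.

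\textbf{Step 2: spreading.} The set of vectors in $\Delta\cap W_j^d$ supported at $i_0$ forms a nonzero $U'$-submodule of $W_j$, hence all of $W_j$ by irreducibility. Transitivity of $\pi(G)$, together with the normality of $\Delta$ in $G$, moves coordinate $i_0$ to every coordinate, giving $W_j^d\subseteq\Delta$. Summing over $j\in J$ yields $W'^d\subseteq\Delta$.

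\textbf{Step 3: minimality when $r=1$.} In this case $J=\{1\}$. Any nontrivial $G$-normal subgroup $N\le W^d$ is a $\Gamma$-submodule, and the argument of Steps 1 and 2 applies verbatim to $N$ in place of $\Delta$, giving $N=W^d$.
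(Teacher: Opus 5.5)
Your Steps 1--3 reproduce the second half of the paper's argument: Step 1 is its Claim 2, followed by the conjugation/spreading step and the minimality remark. However, there is a genuine gap at the start. For a given $j\in J$, you never produce an element $w\in\Delta$ whose $i_0$-component is a nonzero element of $W_j$.

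The justification you give is a non sequitur. That $\Gamma$ projects onto $W.U'$ at $i_0$ says nothing directly about $\Delta=\Gamma\cap W^d$.

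Your proposed repair, that $\Gamma\cap W^d$ projects \emph{onto} $W$ via a Goursat/faithfulness argument, is false in general. Take $H=S_4\times C_2$ with $W=V_4\times C_2$ and $U=S_3$ (faithful on $W$), and let $d=2$. Let $\Gamma$ be the image of $S_4\times S_4$ under $(s,s')\mapsto\bigl((s,\mathrm{sgn}\,s'),(s',\mathrm{sgn}\,s)\bigr)$, which is normalized by the coordinate swap. Both projections of $\Gamma$ equal $H$, yet $\Delta=\{((v,0),(v',0))\}$ projects only onto $V_4\times 0\neq W$.

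Even a merely nonzero projection would not be enough. You need an element whose $i_0$-component lies in $W_j$ for each $j\in J$, not just something nonzero in $W$.

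The missing idea is the paper's first round of commutators, carried out in $\Gamma$ rather than in $\Delta$. Your ``fallback'' gestures at this, but it has to be executed, and it is exactly where faithfulness is used.

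Start from $z\in\Gamma$ with $z_{i_0}\in W_j\setminus\{0\}$, which exists by i). Note that $\rho(z)$ is trivial at $i_0$. Since $\rho([a,b])=[\rho(a),\rho(b)]$, the commutator of a $\Gamma$-conjugate $\tilde z$ of $z$ with $x_1$ has $\rho$-support inside $\supp(\rho(x_1))\setminus\{i_0\}$. Irreducibility of $W_j$, together with the nontriviality of $\rho_j(x_1)$ at $i_0$, lets you choose $\tilde z$ so that the $i_0$-entry $[\tilde z_{i_0},(x_1)_{i_0}]$ is still a nonzero element of $W_j$.

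Iterating over $x_2,\dots,x_s$ gives an element whose $\rho$-support lies in $\bigcap_m\supp(\rho(x_m))\setminus\{i_0\}=\emptyset$. By faithfulness, $C_H(W)=W$, so $\ker\rho=\Delta$, and this element therefore lies in $\Delta$, with $i_0$-entry in $W_j\setminus\{0\}$. Only at this point can your Step 1 start; it is the second round of commutators, now shrinking the support in $W^d$.

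Your Step 3 is fine as written, since there the starting element is taken directly from the normal subgroup $N\le W^d$.
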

Recall from Section \ref{sec:setup} that the image of $\Gamma$ under each of the $H$-components is isomorphic, so we may pick  $W.U'$ to be any of these. 
\begin{proof}
	It suffices to show $\Delta\supseteq W_j^d$, $j\in J$. Henceforth fix $j\in J$. Let $x_1,\dots, x_s$ be as in Assumption ii), and let $i \in \{1,\dots, d\}$ denote the unique element of $\bigcap_{m=1}^s\supp(\rho(x_m))$.
	\\
	\underline{Observation 1}: Given any $\gamma \in \Gamma$ whose $i$-th component $\gamma_i$ lies in $W_j\setminus\{0\}$, and any $1\ne u\in U$ acting nontrivially on $W_j$, there exists a $\Gamma$-conjugate $\tilde{\gamma}\in \Gamma$ of $\gamma$ such that $\tilde{\gamma}_i$ lies in $W_j$ {\textit and} is not fixed by $u$ (i.e., $\tilde{\gamma}_i^u\ne \tilde{\gamma}_i)$. Indeed, irreducibility of $W_j$ implies that the module generated by $\gamma_i^g$, $g\in W.U'$  is all of $W_j$, and the nontrivial action of $u$ then implies that there exists $g\in W.U'$ such that $\gamma_i^g$ is not fixed by $u$. It thus suffices to take $\tilde{\gamma} = \gamma^h$, where $h \in \Gamma$ is an element with $i$-th component entry equal to $g$, which is possible by Assumption i).

	\underline{Claim 1}: By induction on $k=0,\dots, s$, there exists $z^{(k)}\in \Gamma$ whose $i$-th component $z^{(k)}_i$ lies in $W_j\setminus\{0\}$ and such that  $\supp(\rho(z^{(k)})) \subseteq (\bigcap_{m=1}^k \supp(\rho(x_m)))\setminus\{i\}$, where $\bigcap_{m=1}^0 \supp(\rho(x_m))=\{1,\dots,d\}$.
	
	The base case $k=0$ follows since the image of $\Gamma$ under projection to the $i$-th component contains $W_j$ due to Assumption i). Assume inductively that the claim holds for $k-1$, with some element $z^{(k-1)}\in \Gamma$. Due to Observation 1, there exists a $\Gamma$-conjugate $\tilde{z}$ of $z^{(k-1)}$ with $\tilde{z}_i \in W_j$ not fixed by the $i$-th component entry $(x_k)_i$ of $x_k$. Since conjugation in $\Gamma$ is defined componentwise, we have $\supp(\rho(z^{(k-1)}))=\supp(\rho(\tilde{z}))$.
	Let $z^{(k)}:=[\tilde{z}, x_k]$. Then $z^{(k)}_i \in W_j\setminus\{0\}$ by construction, and moreover $\rho(z^{(k)})$ is trivial on every component $m\ne i$ on which at least one of $\rho(\tilde{z})$ and $\rho(x_k)$ is not supported; in other words, due to the induction hypothesis, $\supp(\rho({z}^{(k)})) \subseteq (\bigcap_{m=1}^k \supp(\rho(x_m)))\setminus\{i\}$. This completes the proof of Claim 1.
	
	We next let $z=z^{(s)}\in \Gamma$ fulfill Claim 1 for $k=s$.
	By Claim 1, $\supp(\rho(z))=\emptyset$, i.e., all component entries of $z$ act trivially on $W$.
	Faithfulness of $U$ on $W$ thus implies that all component entries lie in $W$, i.e., $z\in \Delta$ and $z_i\in W_j\setminus\{0\}$.
	
	\underline{Claim 2}: By induction on $k=0,\dots, s$, there exists $y^{(k)}\in \Delta$ such that  $\{i\}\subseteq \supp(y^{(k)})\subseteq \bigcap_{m=1}^k\supp(\rho(x_m))$, and such that the $i$-th component $y^{(k)}_i$ of $y^{(k)}$ lies in $W_j\setminus\{0\}$.
	
	The base case $k=0$ is obtained with $y^{(0)}=z$ as chosen above. Assume inductively that there exists $y^{(k-1)}\in \Delta$ with $y^{(k-1)}_i\in W_j$ and $\{i\}\subseteq \supp(y^{(k-1)})\subseteq \bigcap_{m=1}^{k-1}\supp(\rho(x_m))$.
	Using again Observation 1), we obtain $\tilde{z}\in \Delta$ with $\supp(\tilde{z})=\supp(y^{(k-1)})$, and such that the $i$-th component $(\tilde{z})_i$ lies in $W_j$ and is not fixed by $\rho(x_k)_i\ne 1$.
	Finally, consider the commutator $y^{(k)}:=[\tilde{z},x_k]\in \Delta$.
	This has trivial $W_j$-part on all components not simultaneously supported by $\tilde{z}$ and $\rho(x_k)$, i.e., by construction of $\tilde{z}$, on all components outside $\bigcap_{m=1}^{k}\supp(\rho(x_m))$. Moreover, the $i$-th component is in $W_j\setminus\{0\}$ by construction, completing the induction.
	
	Letting $y=y^{(s)}\in \Delta$, we get $\supp(y)=\{i\}$. Conjugating $y$ by all $\gamma\in \Gamma$, we obtain that the set of elements of $\Delta$ supported only on the $i$-th component contains all of $W_j$.
	
	For the last assertion, the normality of $W^d$ is straightforward. For the minimality assertion, note that every non-zero vector $0\ne \gamma\in W'^d=W^d$ is admissible as a starting vector of the construction above, i.e., the normal subgroup generated by $\gamma$ is all of $W^d$.
\end{proof}

\begin{proof}[Proof of Lemma \ref{lem:normal} using Lemma \ref{lem:fullsocle}]
	In both cases 1) and 2) of Lemma \ref{lem:normal}, the claimed containment follows directly from Lemma \ref{lem:fullsocle} with $r=1$ and $W'=W$: in 1), take
	$C_p < H \le \AGL_1(p)$, with $W=C_p$ being the one-dimensional module under the action of $\{1\}\ne U'\le C_{p-1}$ (since $x_i$ are of order coprime to $q$). Similarly, in 2), take 
	$\ASL_2(2)=A_4\le H\le S_4=\AGL_2(2)$, with $W=\mathbb{F}_2^2$ and $C_3\leq U' \leq S_3 \cong S_4/W$ (since every $x_i$ is of order $3$). 
	Finally, the minimality follows since $W^d=W'^d=W_1^d$ is a minimal normal subgroup by Lemma \ref{lem:fullsocle}.  
\end{proof}

We collect some situations in which we wish to apply Lemma \ref{lem:normal} later on.
\begin{lem}
	\label{lem:support}
	In the setting of Lemma \ref{lem:normal}, the condition $|\bigcap_{j=1}^s\supp(x_j)| = 1$ for some $x_1,\dots, x_s\in K$ is in particular fulfilled in the following cases.
	\begin{itemize}
		\item[a)] Whenever $K/\soc(K)$ contains the augmentation subgroup ${\rm Aug}(C_2^d)$ (in case (1) of Lemma \ref{lem:normal}), resp. ${\rm Aug}(C_3^d)$ (in case (2)).
		\item[b)] Whenever there exists $x\in K$ (of the respective order specified in (1) and (2) of Lemma \ref{lem:normal}) with $1\le |\supp(x)|< d$, and the blocks image $A:={\rm Im}(G\to S_d)$ is primitive. In particular, the latter is automatic whenever $d\ge 3$ is a prime. 
		%\item[c)] Whenever $d\ge 11$ is a prime, $K$ is as in Lemma \ref{lem:normal}(1), and there exists an involution $x\in K$ whose support is of size $6$ and is invariant under some reflection in $D_d\supset C_d$.
	\end{itemize}
\end{lem}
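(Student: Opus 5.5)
The proof reduces both criteria to producing finitely many elements of $K$, of the order required in Lemma \ref{lem:normal}, whose supports meet in exactly one coordinate. For a), these elements come from lifting explicit vectors of the augmentation subgroup. For b), they come from conjugating $x$ and using a block argument. Throughout, $\supp$ refers to the components of an element of $K\le U^d$, with $U=\AGL_1(q)$ resp.\ $S_4$, and $\soc(K)$ is read as $K\cap C_q^d$ resp.\ $K\cap V_4^d$. This reading is consistent with the statement, since $K/(K\cap C_q^d)$ embeds into $C_{q-1}^d$ and $K/(K\cap V_4^d)$ embeds into $S_3^d$.

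\emph{Case a), situation (1).} Since $d\ge 3$, the vectors ${\bf e}_1+{\bf e}_2$ and ${\bf e}_2+{\bf e}_3$ lie in ${\rm Aug}(C_2^d)$, viewed inside $C_{q-1}^d$ via its unique subgroup of order $2$ in each component. Choose $y,y'\in K$ mapping to them. Write the order of $y$ as $q^a m$ with $q\nmid m$. Since $m$ is odd times the $2$-part and $q$ is odd, replacing $y$ by $y^{q^a}$ keeps its image, as $q^a$ is odd. So we may assume $y$ has order coprime to $q$.

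Each component $y_i$ with $i\notin\{1,2\}$ then lies in $C_q$ and has order coprime to $q$, hence is trivial. The components $y_1,y_2$ are nontrivial because their images are. Thus $\supp(y)=\{1,2\}$, and likewise $\supp(y')=\{2,3\}$ after the same modification. The intersection of the supports is $\{2\}$.

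\emph{Case a), situation (2).} Lift $(c,c^{-1},1,\dots,1)$ and $(1,c,c^{-1},1,\dots,1)$, with $c$ of order $3$ in $C_3\le S_3$, to elements $y,y'\in K$. The components of $y$ lie in $A_4$: components $1,2$ have order $3$, and the remaining ones lie in $V_4$. Replacing $y$ by $y^4$ kills the $V_4$-components and fixes the order-$3$ components, because $c^4=c$. This gives an element of order $3$ with support $\{1,2\}$, and similarly one with support $\{2,3\}$.

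\emph{Case b).} Let $S=\supp(x)$, so $\emptyset\ne S\ne\{1,\dots,d\}$. For $g\in G$, the conjugate $gxg^{-1}\in K$ has the same order as $x$ and support $\pi(g)(S)$. Hence it suffices to show that some intersection of $A$-translates of $S$ is a single point. For a point $a$, let $B(a)$ be the intersection of all translates $\alpha(S)$, $\alpha\in A$, containing $a$; such translates exist by transitivity.

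\begin{itemize}
\item If $b\in B(a)$, then every translate containing $a$ contains $b$, so $B(b)\subseteq B(a)$.
\item By transitivity $|B(a)|=|B(b)|$, so in fact $B(b)=B(a)$.
\item Since $a\in B(a)$, the sets $B(a)$ therefore partition $\{1,\dots,d\}$.
\item The partition is $A$-invariant, since $B(\alpha a)=\alpha B(a)$.
\end{itemize}

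By primitivity, either $B(a)=\{a\}$, which is what we want, or $B(a)=\{1,\dots,d\}$. In the latter case every translate containing $a$ would be everything, contradicting $S\ne\{1,\dots,d\}$. The intersection defining $B(a)$ involves finitely many translates, so this yields finitely many conjugates $x_1,\dots,x_s$ of $x$ with $\bigcap\supp(x_j)=\{a\}$.

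Finally, when $d$ is prime, a transitive group of prime degree is primitive, since block sizes divide $d$. The only delicate point is the block argument in b); a) is bookkeeping on orders of components.
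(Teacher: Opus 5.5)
Your proof is correct and essentially matches the paper's. For a), the paper only says ``straightforward check''; your lifts, together with the order adjustments $y\mapsto y^{q^a}$ resp.\ $y\mapsto y^4$, supply exactly that check. For b), the paper shows that the common support $\Delta$ of a maximal family of elements of the required order is a block of $A$. This is the same idea as your sets $B(a)$, the intersections of all $A$-translates of $\supp(x)$ through $a$, forming an $A$-invariant partition, after which primitivity forces singletons; working with conjugates of the single element $x$ just makes finiteness and the use of $\supp(x)\ne\{1,\dots,d\}$ more explicit.
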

\begin{proof}
	a) is a straightforward check. For b), it suffices to note the following: if $S\subseteq K$ is a maximal (with respect to inclusion) subset of elements of the respective specified order such that $\Delta:=\bigcap_{x\in S}\supp(x)\ne \emptyset$, then $\Delta$ is a block under the action of $A$. Indeed, suppose $\emptyset\ne \Delta\cap \Delta^g \ne \Delta$ for some $g\in A$; then $\bigcap_{x\in S\cup S^g}\supp(x)=\Delta\cap \Delta^g\ne \emptyset$, contradicting maximality of $S$.
\end{proof}

\subsubsection{On groups $G\le U\wr V$ with $V\in \{A_n,S_n\}$}
For groups $\Mon(g\circ h)$ with nonsolvable $\Mon(g)\in \{A_n,S_n\}$, large kernel conclusions are often more easily obtained. The following is immediate from \cite{Mor80}.
\begin{lem}\label{lem:invsub} Let $V=A_n,S_n$ for $n \geq 4$, and let $p$ be a prime. Then the only $V$-invariant subgroups of $C_p^n$ are $0$, ${\rm diag}(C_p^n)$, ${\rm Aug}(C_p^n)$ and $C_p^n$. 
\end{lem}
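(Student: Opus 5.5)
The plan is to treat $M:=C_p^n=\mathbb F_p^n$ as the $\mathbb F_p[V]$-permutation module, with $V$ acting by permuting coordinates, and to classify its $V$-invariant subspaces $W$ directly. The two facts driving the argument are that $V$ is $2$-transitive for $n\ge 4$, and that $V$ contains every $3$-cycle, since $A_n\le V$.

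\textbf{Step 1: a nonzero non-diagonal vector generates ${\rm Aug}$.} Let $0\ne W\le M$ be invariant. First suppose $W$ contains a vector ${\bf v}=(v_1,\dots,v_n)$ that is not constant. Then there are indices $i\ne j$ with $v_i\ne v_j$. Since $n\ge 4$, we can pick a third index $k$ and let $\sigma$ be the $3$-cycle $(i\,j\,k)\in A_n\le V$. The difference ${\bf v}-\sigma\cdot{\bf v}$ lies in $W$ and is supported on $\{i,j,k\}$.

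\textbf{Step 2: reduce to a weight-two vector.} I would iterate with suitable permutations to obtain a vector of the form $a({\bf e}_i-{\bf e}_j)$ with $a\ne 0$. One way is to mirror Lemma \ref{lem:havzero}: take a $3$-cycle or double transposition that moves one support coordinate onto a coordinate where the vector vanishes, and subtract. A cleaner route is to choose $l\notin\{i,j\}$ and use the double transposition $\tau=(i\,j)(k\,l)\in A_n$. Then ${\bf v}-\tau\cdot{\bf v}=(v_i-v_j)({\bf e}_i-{\bf e}_j)+(v_k-v_l)({\bf e}_k-{\bf e}_l)$. If this is not already of the desired shape, apply $\tau'=(k\,l)(i\,m)$ or a similar element to cancel the second summand. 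The case $n=4$ may need a separate check, but it works as in Lemma \ref{lem:havzero}, since $A_4$ still acts $2$-transitively.

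\textbf{Step 3: conclude ${\rm Aug}(C_p^n)\subseteq W$.} Once $W$ contains some $a({\bf e}_i-{\bf e}_j)$ with $a\ne0$, $2$-transitivity of $V$ puts every ${\bf e}_{i'}-{\bf e}_{j'}$ into $W$. These vectors span ${\rm Aug}(C_p^n)$.

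\textbf{Step 4: the two cases for $W$.} If every vector of $W$ is constant, then $W\subseteq{\rm diag}(C_p^n)$, which is one-dimensional, so $W\in\{0,{\rm diag}\}$. Otherwise $W\supseteq{\rm Aug}$. Since ${\rm Aug}$ has codimension $1$ in $M$, this gives $W\in\{{\rm Aug},M\}$. When $p\mid n$ we have ${\rm diag}\subseteq{\rm Aug}$, and this does not change the list of possibilities. This matches the statement, which in the paper is simply read off from \cite{Mor80}.

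The main obstacle is Step 2, which extracts a weight-two vector uniformly in $p$ and in small $n$, in particular $n=4$ with $V=A_4$. For that case I would verify directly that a vector supported on three coordinates generates ${\rm Aug}$ under $A_4$, using a double transposition swapping a support coordinate with the zero coordinate.
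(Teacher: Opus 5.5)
Your route is genuinely different from the paper's, which simply reads the lemma off from Mortimer's classification \cite{Mor80}. Steps 1, 3 and 4 are fine. However, Step 2 is a real gap. You flag it yourself as the main obstacle, the whole argument rests on it, and it is never carried out; the mechanisms you indicate also do not work as stated. Take ${\bf u}=a({\bf e}_i-{\bf e}_j)+b({\bf e}_k-{\bf e}_l)$ with $ab\ne0$ and $\tau'=(k\,l)(i\,m)$. Then ${\bf u}+\tau'\cdot{\bf u}=a({\bf e}_i+{\bf e}_m-2{\bf e}_j)$ and ${\bf u}-\tau'\cdot{\bf u}=a({\bf e}_i-{\bf e}_m)+2b({\bf e}_k-{\bf e}_l)$. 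For odd $p$ neither has the form $a({\bf e}_i-{\bf e}_j)$, and $\tau'$ needs $n\ge5$ anyway. For $n=4$, $V=A_4$, your hint (a double transposition swapping a support coordinate with the zero coordinate) sends ${\bf w}=(a,b,c,0)$ to ${\bf w}-(1\,4)(2\,3)\cdot{\bf w}=a({\bf e}_1-{\bf e}_4)+(b-c)({\bf e}_2-{\bf e}_3)$. This typically has weight $4$, e.g.\ $p=7$, ${\bf w}=(1,2,4,0)$. This case cannot be waved through: the analogous statement fails for $A_3$ when $p\equiv1\bmod 3$, so the proof must visibly use the structure of $A_4$.

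The repair is short. Take ${\bf w}={\bf v}-\sigma\cdot{\bf v}$ from Step 1, which is nonzero, supported in $\{i,j,k\}$, and has coordinate sum $0$. Use the convention $\sigma\cdot{\bf e}_m={\bf e}_{\sigma(m)}$.

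\emph{Case $n\ge5$.} Here ${\bf w}$ vanishes at two indices $x,y$. For $r$ with $w_r\ne0$, the $3$-cycle gives ${\bf w}-(r\,x\,y)\cdot{\bf w}=w_r({\bf e}_r-{\bf e}_x)$, so no double transpositions are needed.

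\emph{Case $n=4$.} Write ${\bf w}=(a,b,c,0)$ with $a+b+c=0$. If some entry vanishes, ${\bf w}$ is already a multiple of some ${\bf e}_i-{\bf e}_j$. This is automatic for $p=2$, since a sum-zero vector has even weight. For odd $p$ and $abc\ne0$, one gets ${\bf w}+(1\,2)(3\,4)\cdot{\bf w}=-c(1,1,-1,-1)$. Its image under $(1\,2\,3)$ is $c(1,-1,-1,1)$, and the difference of the two is $-2c({\bf e}_1-{\bf e}_3)\ne0$.

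With this inserted, your Steps 3 and 4 finish the argument. The result is a self-contained elementary proof using only $2$-transitivity and the presence of $3$-cycles and double transpositions, in place of the paper's appeal to \cite{Mor80}.
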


The following lemma is useful for analyzing the case of compositions $g\circ T_p$ where $g$ is an $S_n$-polynomial ($n\ge 4$).
%Below is what can be said without invoking polynomial monodromy (and without invoking Ritt theorems yet).
\begin{lem}\label{lem:snTp}
	Let $U\in \{C_p,D_p\}$ for a prime $p$, let $V\in \{A_n,S_n\}$  for $n\ge 4$,  and let $G\le U\wr V$ be a transitive subgroup projecting onto $V$, with block kernel $\Gamma:=\ker(G\to V)$. Then one of the following holds: 
	\begin{itemize}
		\item[i)] $\Gamma$ contains a subgroup $C_{p}^{n-1}$.
		\item[ii)] $G$ embeds into $U\times V$.
		\item[iii)] $n=4$, and $G/(\Gamma\cap C_p^4)$ (for $p>2$) resp.\ $G$ (for $p=2$) is isomorphic to $\SL_2(3) (\cong 2.A_4)$ or $\GL_2(3) (\cong 2.S_4)$. %\marginpar{There's also $8T40=2^3.GL_2(3)$ nonsplit, but that would go into case i).}
		\item[iv)] $n=6$, and $G$ is a nonsplit extension $G\cong 3.V$.
	\end{itemize}
\end{lem}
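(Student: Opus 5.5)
The plan is to analyse $\Delta:=\Gamma\cap C_p^n$ with Lemma \ref{lem:invsub}, then show that when $\Delta$ is small the whole of $\Gamma$ is small, and finally classify the resulting small extensions of $V$ through Schur multipliers and the existence of a transitive action of degree $pn$.

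\textbf{Step 1: $\Delta$.} First I check that $C_p^n$ is, up to a twist, the $\mathbb F_p[V]$-permutation module. The action of the block stabilizer is given by a degree-one character $\psi$ of $V_1\in\{A_{n-1},S_{n-1}\}$ factoring through $D_p/C_p\cong C_2$.
\begin{itemize}
\item For $V_1=A_{n-1}$ with $n\ge4$, $\psi$ is trivial, since $A_3$ and $A_{n-1}$ for $n-1\ge4$ have no $C_2$ quotient.
\item For $V_1=S_{n-1}$, $\psi$ is trivial or the sign character. In the latter case the module is the permutation module tensored with the sign of $S_n$, which has the same invariant subspaces.
\end{itemize}
Hence Lemma \ref{lem:invsub} applies, and $\Delta\in\{0,{\rm diag},{\rm Aug},C_p^n\}$. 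If $\Delta\supseteq{\rm Aug}(C_p^n)$ we are in case i). So from now on $\Delta\le{\rm diag}(C_p^n)$, and in particular $|\Delta|\le p$.

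\textbf{Step 2: bounding $\Gamma/\Delta$ when $U=D_p$, $p$ odd.} Here $\Gamma/\Delta$ maps $V$-equivariantly into $(D_p/C_p)^n=C_2^n$. Its image is again $0$, diagonal, ${\rm Aug}$ or everything, by Lemma \ref{lem:invsub} with $p=2$. I rule out the last two options as follows.
\begin{itemize}
\item Take $x,x'\in\Gamma$ whose images are $\mathbf e_1+\mathbf e_2$ and $\mathbf e_1+\mathbf e_3$. Their commutator lies in $C_p^n$ and is supported only on component $1$.
\item To make it nontrivial, first conjugate $x'$ by an element of $\Gamma$ so that its reflection in component $1$ differs from that of $x$. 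This is possible because the projection of $\Gamma$ to a component contains $C_p$, as the component image is isomorphic to the block action.
\item A nonzero vector of support $1$ in $\Delta$ contradicts $\Delta\le{\rm diag}$.
\end{itemize}
Thus $\Gamma$ is contained in the preimage of the diagonal, so $|\Gamma|\le 2p$, and $\Gamma$ is either central of order $\le 2$ or $p$, or isomorphic to ${\rm diag}(D_p^n)$. In the non-central case $\Gamma\cong D_p$, and $G=\Gamma\cdot C_G(\Gamma)$ with $\Gamma\cap C_G(\Gamma)=1$: $V$ has no nontrivial solvable quotient except possibly $C_2$, and every automorphism of $D_p$ in the image is inner. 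Then $G\cong D_p\times V$, giving ii).

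\textbf{Step 3: $\Gamma$ central cyclic of prime order $r\in\{2,p\}$.} If the extension $\Gamma.V$ splits, then $G=\Gamma\times V'$ embeds in $U\times V$, giving ii). Otherwise $r$ divides the Schur multiplier of $V$. This forces $r=2$, or $r=3$ with $V\in\{A_6,A_7\}$ (the $S_6,S_7$ cases reduce to these). I then use transitivity on $pn$ points to cut down the cases.
\begin{itemize}
\item Suppose $r=p$, or $r=2$ with $p=2$. A point stabilizer of $G$ has index $pn$ and maps onto $V_1$, so it is a complement to $\Gamma$ in the preimage $\Gamma.V_1$. Hence the restricted extension to $V_1$ splits.
\item For the double covers with $n\ge5$, the restriction to $A_{n-1}$ or $S_{n-1}$ stays nonsplit, which is a contradiction.
\item For $3.A_7$ the restriction to $A_6$ is nonsplit, so $n=7$ is excluded, while $n=6$ survives. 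This is case iv).
\item For $n=4$ the nonsplit double covers of $A_4,S_4$ contained in $U\wr V$ are $\SL_2(3)$ and $\GL_2(3)$. Among these the $S_4$-cover $\GL_2(3)$ must be singled out against the other double cover by a direct check, giving iii).
\item For $p$ odd with $\Gamma/\Delta$ central of order $2$, the same argument applied to $G/\Delta$ gives iii), or leads back to ii).
\end{itemize}

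I expect the main obstacle to be Step 3. It requires applying the split/nonsplit restriction facts for covers of $A_n,S_n$ carefully, including which $S_4$ double cover occurs and the exceptional triple covers, and making sure the splitting argument really uses a point stabilizer of the degree-$pn$ action.
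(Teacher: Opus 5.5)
\textbf{The gap is in Step 1.} You treat $C_p^n$ as an $\mathbb F_p[V]$-module induced from a character $\psi$ of $V_1$. But the conjugation action of $G$ on $C_p^n$ factors through $V=G/\Gamma$ only if $\Gamma$ acts trivially on $C_p^n$, i.e.\ only if $\Gamma\le C_p^n$. An element of $\Gamma$ with reflection entries acts on $C_p^n$ by a nontrivial diagonal sign matrix. In general, then, the action factors only through $G/\Delta\le C_2\wr V$, and $\psi$ is a character of a block stabilizer of $G/\Delta$, not of $V_1$.

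The resulting dichotomy ``$\Delta\supseteq{\rm Aug}(C_p^n)$ or $\Delta\le{\rm diag}(C_p^n)$'' is then false. Take $n=4$, $p=7$, and embed $\SL_2(3)\le C_2\wr A_4$ (its action on the $8$ nonzero vectors of $\mathbb F_3^2$) into $D_7\wr A_4$ via $C_2=\langle r\rangle$ for a reflection $r$. Then $-I$ acts on $C_7^4$ as $-1$. The monomial module $C_7^4$ has character equal to the sum of the two $2$-dimensional irreducible characters of $\SL_2(3)$ with values in $\mathbb Q(\sqrt{-3})$, and both are realizable over $\mathbb F_7$. If $W$ is one of these summands, $G=W\rtimes\SL_2(3)$ is transitive on $28$ points, projects onto $A_4$, and has $\Delta=W$ of dimension $2$. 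The lemma holds here via iii), but your argument never reaches this group, since everything after Step 1 assumes $|\Delta|\le p$.

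\textbf{How to fix it.} Reverse the order, as the paper does: settle the $2$-layer before the $p$-layer. Work first in $\bar G:=G/\Delta\le C_2\wr V$, whose kernel $\Gamma/\Delta$ is a genuine $V$-submodule of the permutation module $C_2^n$.

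If $\Gamma/\Delta$ contains ${\rm Aug}(C_2^n)$, then $\Delta$ is preserved by all sign matrices with an even number of $-1$'s. For $n\ge 3$ this makes $\Delta$ a sum of coordinate lines, hence $0$ or $C_p^n$, and your commutator shows $\Delta\neq 0$, giving i).

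If $\Gamma/\Delta$ is the diagonal $C_2$, you first need $\bar G$ to split over $V$ (or at least over $A_n$). Only then does the preimage $\hat V$ of a complement act on $C_p^n$ through $\hat V/\Delta\cong V$ as a twisted permutation module, so that Lemma \ref{lem:invsub} applies to $\Delta$. The paper gets this splitting from \cite[Lemma 7.2]{KNR24} for $n\ge 5$ plus a direct check for $n=4$, and that check is exactly where $\SL_2(3)$ and $\GL_2(3)$, i.e.\ case iii), are split off. Your own point-stabilizer argument from Step 3, applied to the transitive action of $\bar G$ on $2n$ points, would also give this splitting for $n\ge5$ and leave only these two groups for $n=4$. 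That would be an elementary substitute for the citation.

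\textbf{Two smaller slips.} First, the commutator in Step 2 can be nontrivial on components $2$ and $3$ as well, since a reflection and a nontrivial rotation do not commute. This is harmless, because its support is still smaller than $n$. Second, a diagonal $\Gamma\cong C_p$ need not be central when $V=S_n$ and the twist is the sign character, as in $3.S_6$. Step 3 therefore also has to treat that non-central case, for example by restricting to $A_n$ as the paper does.
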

\begin{proof}
	Let  $H=G/(\Gamma\cap C_p^n)$ (for $p>2$), resp.\ $H=G$ (for $p=2$). Then $H$ embeds into $C_2\wr S_n=D_p\wr S_n/C_p^n$, and surjects onto $S_n$ or $A_n$. 
	We next consider the exact sequence 
	\begin{equation} 
		\label{splitseq}
		1\to H\cap C_2^n \to H\cap (C_2\wr A_n)\to A_n\to 1.\end{equation} 
	For $n\ge 5$, this sequence splits by \cite[Lemma 7.2]{KNR24}. For $n=4$, one may verify directly that there is exactly one nonsplit extension embedding into $C_2\wr A_4$, namely ${\rm SL}_2(3)$. 
	
	In total, exempting the case that $H$ contains $\SL_2(3)$ as a subgroup of index at most $2$ (i.e., Case iii) of the assertion), $\Gamma\cap C_p^n$ becomes a submodule of the $\mathbb{F}_p[A_n]$-permutation module. But the latter
	only has two nontrivial submodules, of dimensions $1$ and $n-1$ by Lemma \ref{lem:invsub}. To show that we are in Case i) of our assertion, we thus only need to show $|\Gamma\cap C_p^n|>p$. 
	Assume on the contrary that $|\Gamma\cap C_p^n|=p$. Then $\Gamma\in \{C_p,D_p\}$. 
	
	Moreover, $A_n$ acts trivially on $\Gamma$ since it has no nontrivial homomorphisms to $\mathbb F_p^\times$ for $n\ge 5$ (resp.\ since $A_4$ and $C_2=D_p/C_p$ have no nontrivial common quotient for $n=4$), see the map $\psi$ in Section \ref{sec:setup}.
	Now if the extension $\Gamma.A_n$ were nonsplit, then due to the splitting of \eqref{splitseq}, $C_p.A_n$ would have to be a nonsplit central extension. For $p=2$, a splitting is given above by the splitting of \eqref{splitseq}. For $p>2$,  such a nonsplit extension $C_p.A_n$ exists only for $p=3$ and $n=6,7$; the case $n=7$, however, cannot occur since a nonsplit extension $3.A_7$ does not have a faithful transitive degree $21$ action. Excluding thus finally Case iv) of the assertion, we obtain that $G$ contains $C_p\times A_n$
	as a subgroup of index dividing $4$. Since the normal subgroup $C_p$ has a complement in $C_p\times A_n$ and $[G:C_p\times A_n]$ is coprime to $p$, a famous theorem by Gasch\"utz \cite{Gasch52} asserts that $C_p$ has a complement $M$ in $G$. Since only $C_p$ and $A_n$ if $n\geq 5$ (resp.\ $V_4$ if $n=4$)  are the unique minimal normal subgroups of $G$, and only $A_n$ is contained in $M$, the image of $G$ acting on cosets of $M$ embeds into $U$ (i.e.\ to $C_p$ or $D_p$). Together with the restriction map $G\to V$, this yields a map $G\to U\times V$. Since $M\cap \Gamma$ is a subgroup of $\Gamma\le D_p$ not containing $C_p$, it contains no nontrivial normal subgroup of $G$, whence the induced map $G\to U\times V$ is an injection.
	
\end{proof}

\subsection{Ritt moves and direct products}

For this subsection, assume that $k=\overline{k}$ is an algebraically closed field of characteristic $0$.

\begin{prop}\label{prop:rittimpldiag}
	Suppose $g,h\in k[X]$ are indecomposable polynomials such that $g\circ h$ admits a Ritt move. Then $\mon(g\circ h)$ embeds into $\mon(g)\times \mon(h)$, with equality as long as $g\circ h$ is not linearly equivalent to $T_{pq}$ for primes $p,q$. %\AB{This lemma also works when $\mon(g)=S_4$.}
\end{prop}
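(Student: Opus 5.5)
The plan is to exploit the two decompositions $f:=g\circ h=v\circ u$ given by the Ritt move. Let $\Omega=f^{-1}(t)$ be the generic fiber and $G=\Mon(f)$. Each decomposition gives a block system. The fibers of $h$ (the sets $h^{-1}(s)$ for $s\in g^{-1}(t)$) form a system $\mathcal B_h$ with $\deg(g)$ blocks of size $\deg(h)$, and $G$ acts on $\mathcal B_h$ as $\Mon(g)$. Likewise the fibers of $u$ form a system $\mathcal B_u$ with $\deg(v)=\deg(h)$ blocks of size $\deg(u)=\deg(g)$, and $G$ acts on $\mathcal B_u$ as $\Mon(v)$.

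\textbf{Step 1: the two block systems combine.} A block of $\mathcal B_h$ meets a block of $\mathcal B_u$ in at most one point. To see this, note that a point of the intersection is determined by the pair of fields $k(h(x))$ and $k(u(x))$. By Lüroth these correspond to intermediate fields of $k(x)/k(t)$ of degrees $\deg(h)$ and $\deg(u)$ over $k(t)$. Since $\gcd(\deg g,\deg h)=1$, their compositum has degree divisible by $\deg(g)\deg(h)$, so it equals $k(x)$. Hence the intersection of the stabilizers of the two blocks containing $x$ equals the stabilizer of $x$. It follows that $\Omega\to \mathcal B_h\times\mathcal B_u$ is injective, and comparing cardinalities shows it is bijective.

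\textbf{Step 2: the embedding.} The map $G\to \Mon(g)\times\Mon(v)$ is injective, because an element acting trivially on both block systems fixes every point. This embeds $\Mon(f)$ into $\Mon(g)\times \Mon(v)$. Next I need $\Mon(v)\cong\Mon(h)$ as abstract groups. I read this off the standard forms in \eqref{eq:rittstep}. In the Chebyshev case $v\sim T_m$ and $h\sim T_m$. In the other case $v\sim X^s w(X)^n$ and $h\sim X^s w(X^n)$. Here $h$ is the pullback of $v$ by $X^n$, so $\Mon(h)$ and $\Mon(v)$ coincide in the geometric (algebraically closed) setting, by a short branch-cycle comparison. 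Alternatively, and more cleanly, swap roles: the same argument with $\mathcal B_v$-type systems (fibers of $g$ composed appropriately) shows $G$ surjects onto $\Mon(h)$ via the action on blocks of $\mathcal B_u$ restricted to one block. I will pick whichever is quicker. The simplest route is to observe that the action of a block stabilizer $G_B$, for $B\in\mathcal B_h$, on $B$ is $\Mon(h)$. Via Step 1, $B$ is identified with $\mathcal B_u$, and the stabilizer of $B$ acting there already realizes $\Mon(h)$ inside $\Mon(v)$. Degree and order counting then finishes the argument.

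\textbf{Step 3: equality.} The main obstacle is surjectivity, which amounts to showing that $G$ is not a proper subdirect product, i.e.\ $|G|=|\Mon(g)||\Mon(h)|$. I will use Goursat: a proper subdirect product corresponds to a common nontrivial quotient $\Mon(g)/N_1\cong \Mon(h)/N_2$ identified inside $G$, which yields a nontrivial Galois subextension of $\Omega_g\cap\Omega_h$, where $\Omega_g$ and $\Omega_h$ denote the Galois closures of $g(X)-t$ and $v(X)-t$ over $k(t)$. Both are Galois over $k(t)$, so this common subextension $L$ is a nontrivial Galois extension of $k(t)$. Its branch points lie in the intersection of the branch loci of $g$ and $v$, and it is totally ramified or controlled at $\infty$. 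In the monomial case, $g\sim X^n$ is branched only at $0,\infty$, so $L$ is cyclic and branched at $0,\infty$. The inertia of $v$ at $0$ has cycle type given by $X^sw^n$, with cycle lengths $s$ and multiples of $n$, and $v$ has inertia $\deg v$ at $\infty$, which is coprime to $n$. This forces $L$ to be trivial, and I check this by computing the order of the image of the inertia at $\infty$ in the quotient. In the Chebyshev case $C_n$ and $C_m$ are coprime, so the only common quotient is $C_2$ (from $D_n,D_m$). This occurs exactly when both $n,m$ are odd primes, i.e.\ $f\sim T_{pq}$, the excluded case. When one of $n,m$ is $2$, the group $\Mon(T_2)=C_2$ and a $C_2$-quotient of $D_q$ could still coincide. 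I will handle this by a direct comparison, $\Mon(T_{2q})=D_{2q}\cong D_q\times C_2$ for $q$ odd, so equality holds there.
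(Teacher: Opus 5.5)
Your Steps 1 and 3 are sound and follow the paper closely. The paper also embeds $\Mon(g\circ h)$ into the product of the two coset actions, using that the point stabilizer is the intersection of the two block stabilizers; you supply the coprime-degree argument that the paper leaves implicit. Your equality argument is a field-theoretic version of the paper's. You note that $L\subseteq\Omega_{X^n}$ is totally ramified at $\infty$, so $[L:k(t)]=e_\infty(L)$ divides $\gcd(n,\deg v)=1$. The paper instead notes that a proper subgroup of $\Mon(g)\times C_p$ surjecting onto $\Mon(g)$ would have trivial block kernel, contradicting $1\ne\sigma^{\deg g}$ for the full cycle $\sigma$.

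The genuine gap is in Step 2. You only obtain $\Mon(g\circ h)\hookrightarrow\Mon(g)\times\Mon(v)$, and you never prove $\Mon(v)\cong\Mon(h)$; the paper takes this from \cite[Cor.~2.11, Thm.~2.13]{MZ09}.

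Your ``simplest route'' correctly shows that $G_B$ acts on $B\cong\mathcal B_u$ as $\Mon(h)$, so $\Mon(h)$ is isomorphic to a subgroup of $\Mon(v)$. But ``degree and order counting'' cannot upgrade this to equality. What is needed is that $G_B$ surjects onto $\Mon(v)=G/K_u$, i.e.\ $G=G_BK_u$ with $K_u:=\ker(G\to\Mon(v))$, and no numerical constraint rules out, e.g., $K_u\le G_B$.

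Your route via the standard forms has the same hole. From $X^n\circ h=v\circ X^n$ one gets $\Mon(h)\cong\mathrm{Gal}(\Omega_v(\tau)/k(\tau))$ with $\tau^n=t$. This equals $\Mon(v)$ only after showing $\Omega_v\cap k(\tau)=k(t)$, which is an inertia-at-$\infty$ coprimality argument, not a short branch-cycle comparison.

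The missing idea is structural:
\begin{itemize}
\item Let $\sigma$ be the full $\deg(g\circ h)$-cycle at infinity. It acts on $\mathcal B_u$ as a $\deg(v)$-cycle, so $1\ne\sigma^{\deg v}\in K_u$.
\item By Steps 1--2, $K_u\cap K_h=1$, where $K_h:=\ker(G\to\Mon(g))$. So $K_u$ maps isomorphically onto a nontrivial normal subgroup of the primitive group $\Mon(g)$, which is therefore transitive on $\mathcal B_h$.
\item Hence $G=G_BK_u$, and the image of $G_B$ in $\Mon(v)$ is all of $\Mon(v)$.
\item The $G_B$-equivariant bijection $B\to\mathcal B_u$ then gives $\Mon(h)\cong\Mon(v)$ as permutation groups.
\end{itemize}
In the orientation $g\sim X^n$ you can also get this from your own Step 3: once $G=C_n\times\Mon(v)$, the block stabilizer is $G_B=1\times\Mon(v)$. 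With either fix, or simply the citation to \cite{MZ09}, your proof is complete.

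Two minor points. Step 3 must also be run in the orientation $h\sim v\sim X^n$, symmetrically with $\Omega_v$ as the cyclic field. Your discussion of the Chebyshev case, including $T_{2q}$, is unnecessary, since $T_n\circ T_m=T_{nm}$ with $n,m$ prime is exactly the excluded case.
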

\begin{proof}
	Following Corollary 2.11 and Theorem 2.13 of \cite{MZ09}, one has an equality $g\circ h = \tilde{h}\circ \tilde{g}$ with isomorphisms $\Mon(\tilde{g})\cong \Mon(g)$ and $\Mon(\tilde{h})\cong\Mon(h)$ of permutation groups. Denote by $U$ and $V$ the subgroups of $G:=\Mon(g\circ h)$ fixing a root of $\tilde{h}(X)-t$ and of $g(X)-t$ respectively. In particular the image of $V$ acting on cosets of $U\cap V$ equals (the image of $G$ acting on cosets of $U$, i.e.) $\Mon(h)$, and the image of $U$ acting on cosets of $U\cap V$ equals $\Mon(g)$. Since $U\cap V$ equals the stabilizer of a root of $g(h(X))-t$, its core $\textrm{core}_G(U\cap V)$ is trivial; on the other hand it is elementary that $G/\textrm{core}_G(U\cap V)$ injects into $G/\textrm{core}_G(U) \times G/\textrm{core}_G(V) = \Mon(h)\times \Mon(g)$. If additionally, $g\circ h$ is not linearly equivalent to $T_{pq}$, then by Ritt's theorems (see \eqref{eq:rittstep}), one of $g$ and $h$ (say, $h$, without loss of generality) is linearly equivalent to $X^p$. In particular, $\Mon(g\circ h)$ is then a subgroup of $\Mon(g)\times C_p$, surjecting onto $\Mon(g)$. If $\Mon(g\circ h)<\Mon(g)\times C_p$ was a proper subgroup, it would project isomorphically to $\Mon(g)$. This is however impossible since $\ker(\Mon(g\circ h)\to\Mon(g))>1$ due to the presence of a full cycle in $\Mon(g\circ h)$; Indeed, if $\sigma$ is an inertia group generator at $\infty$ -- in particular a cycle of length $\deg(g \circ h)$ -- then, by Lemma \ref{lem:fullcyindiag}, $\sigma^{\deg(g)}$ is a nontrivial element of the kernel. 
\end{proof}

\begin{prop}
	\label{prop:diagimplritt}
	Let $g,h\in k[X]$ be indecomposable polynomials of monodromy $U=\Mon(h)$ and $V=\Mon(g)$ satisfying the following:
	\begin{itemize}
		\item[a)] $U$ is cyclic or dihedral;
		\item[b)] $V$ is neither cyclic nor dihedral;
		\item[c)] $\Mon(g\circ h)$ embeds into $U\times V$.
	\end{itemize} 
	Then one of the following holds. 
	\begin{enumerate}[label=\textbf{\arabic*.}, ref=\arabic*]
		\item $g\circ h$ admits a Ritt move.
		\label{diagimplritt_rittcase}
		\item $h\sim X^p$, and moreover $$(p,\Mon(g))\in \{(2,\PSL_3(2)), (2,M_{11}), (2,\PSL_3(3)), (3, A_5), (3, \PGL_2(7))\}.$$ 
		\label{diagimplritt_exccase}
	\end{enumerate}
\end{prop}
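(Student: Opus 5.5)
The plan is to use condition c) to realize $g\circ h$ as a fiber-product-like situation and convert it into a second decomposition $g\circ h=\tilde h\circ\tilde g$; after that, Ritt's theorem and Müller's list (Theorem \ref{prop:primmon}) finish the argument.

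\textbf{Step 1: produce a subgroup of index $p$.} Let $G=\Mon(g\circ h)\le U\times V$, where $U$ is cyclic or dihedral of prime degree $p$ by a) and Theorem \ref{prop:primmon}. The kernel $\Gamma=\ker(G\to V)$ embeds into $U$, and it is nontrivial by Lemma \ref{lem:fullcyindiag} applied to the inertia generator at $\infty$. Let $M=G\cap(U_1\times V)$, where $U_1$ is a point stabilizer of $U$; then $G/\mathrm{core}(M)$ is a quotient of $U$. I want $G$ to act on $G/M$ as a group of degree $p$. The natural route is to show that the projection $G\to U$ is surjective and that $M$ has index $p$. If the image of $G$ in $U$ is only $C_p$ while $U=D_p$, I would replace $U$ by that image; this still yields a degree-$p$ quotient.

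\textbf{Step 2: turn the index-$p$ subgroup into a decomposition.} Let $S$ be the stabilizer of a root of $g\circ h$. By construction $S\le M$, since $S$ maps into $U_1$ in the first coordinate. The intermediate field fixed by $M$ is therefore a degree-$p$ subfield of $k(x)$ containing $k(t)$. By Lüroth's theorem this subfield is $k(\tilde g(x))$, and $g\circ h=\tilde h\circ \tilde g$ with $\deg\tilde g=p$.

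Next I must check whether $\tilde g$ is merely a linear twist of $h$, which would mean $k(\tilde g(x))=k(h(x))$. The subgroup fixing $h(x)$ is $\pi^{-1}(V_1)$, which has index $\deg g$. If $\deg g\ne p$, the two fields are distinct, giving a non-unique decomposition and hence a Ritt move, by Ritt's theorem as cited from \cite{MZ09}. The bad situation is when $\deg g= p$, or more generally when $\tilde h$ is decomposable or $\tilde g$ has the wrong ramification. The polynomiality of $\tilde h$ and $\tilde g$ follows from total ramification at $\infty$.

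\textbf{Step 3: the exceptional cases (main obstacle).} The real difficulty is that a Ritt move also requires $\gcd(\deg g,\deg h)=1$. Condition b) forces $\Mon(g)$ to be $S_4$ or nonsolvable. If $p\mid\deg g$, then $U\times V$ containing $G$ does not give a second decomposition through Lüroth alone. In that case I would use the branch cycle description. Write $\sigma_\infty=(\tau,\rho)$, where $\tau$ is a $p$-cycle (or a power of one) and $\rho$ is a $\deg g$-cycle. For $\sigma_\infty$ to be an $n p$-cycle in the degree-$pn$ action, $\langle\sigma_\infty\rangle$ must act regularly on the fiber, and this forces strong compatibility conditions on the cycle types of the finite inertia generators.

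Together with Riemann–Hurwitz (\ref{rh_formula}), this should force $h\sim X^p$, because the dihedral case $T_p$ contributes branch points with type $[2^{(p-1)/2},1]$ that cannot be matched inside $U\times V$. It should also force $\Mon(g)$ to have a branch point whose inertia in $V$ lifts compatibly. I would then run through Müller's list of ramification types of $g$ with $p\mid \deg g$, for example $p=2$ with $\deg g\in\{8,...\}$, or rather the sporadic degrees. Note that the listed exceptions $\PSL_3(2),M_{11},\PSL_3(3)$ have odd degree $7,11,13$ with $p=2$, so coprimality holds there. In these cases the failure must come from $\tilde h$ not being a polynomial of the right type, that is, from a fake decomposition of the group that is not realized by polynomials.

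So the honest plan for the last step is the following. Whenever Step 2 yields subfields whose intersection and compositum do not match the Ritt normal forms, check directly, case by case via Müller's table and a Magma computation, which $(p,\Mon(g))$ admit a genus-$0$ degree-$p$ cover compatible with $G\le C_p\times V$. I expect exactly the five listed pairs to survive.
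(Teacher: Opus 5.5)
\textbf{There is a genuine gap in Step 2, and it sits exactly where the exceptional cases arise.}

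Step 2 rests on the claim ``By construction $S\le M$, since $S$ maps into $U_1$ in the first coordinate,'' and this is unjustified. Hypothesis c) only gives \emph{some} injective homomorphism $G\hookrightarrow U\times V$. Nothing forces its first coordinate to send the root stabilizer $S$ into a point stabilizer of $U$.

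In fact the containment fails precisely in the exceptional cases. Take $(p,\Mon(g))=(2,\PSL_3(2))$.
\begin{itemize}
\item[(i)] One has $G\cong C_2\times\PSL_3(2)$. Its only index-$2$ subgroup is $M=\{1\}\times\PSL_3(2)$, because $\PSL_3(2)$ is perfect.
\item[(ii)] The block stabilizer is $H=\pi^{-1}(V_1)=C_2\times S_4$.
\item[(iii)] The core-free index-$2$ subgroups of $H$ are $\{1\}\times S_4\le M$ and the twisted diagonal $\{(\mathrm{sgn}(x),x): x\in S_4\}\not\le M$. The first gives a Ritt move; the second is the exceptional case.
\end{itemize}

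So your diagnosis in Step 3, ``fake decompositions not realized by polynomials'', is wrong. Whenever $S\le M$ holds, L\"uroth plus total ramification at $\infty$ always yields a genuine polynomial decomposition. In the exceptions, $S$ simply has no second overgroup.

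The coprimality issue you call the main obstacle is also not one. The inertia generator at $\infty$ is a $p\deg(g)$-cycle in $U\times V$, and its order is the lcm of the orders of its two coordinates. This immediately forces $p\nmid\deg(g)$.

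There is also a degree slip. The fixed field of $M$ has degree $p$ over $k(t)$, so $\deg\tilde h=p$ and $\deg\tilde g=\deg g$, not $\deg\tilde g=p$.

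\textbf{The missing idea is a mechanism that forces $S$ into a second overgroup; this is the heart of the paper's proof.}
\begin{itemize}
\item[(i)] Suppose the point stabilizer $V_1$ has no composition factor $C_p$. Then $H$ has exactly one, so all subgroups of $H$ with coset image $C_p$ or $D_p$ are $H$-conjugate. Both $S$ and $M\cap H$ are such subgroups; for $M\cap H$ this uses $HM=G$, which holds by coprimality of indices. Hence $S$ lies in a conjugate of $M$, an overgroup distinct from $H$, and Ritt's theorem gives a Ritt move.
\item[(ii)] For $V=S_d$ and $p=2$, the stabilizer $V_1=S_{d-1}$ does have a $C_2$ factor, so (i) does not apply. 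The paper instead uses that $G$ and $H$ have abelianizations of equal $2$-rank. Restriction is then a bijection on index-$2$ subgroups, so $S$ is the trace on $H$ of an index-$2$ subgroup of $G$.
\end{itemize}

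Only after these two reductions is one left with finitely many pairs $(p,V)$ from M\"uller's list. The paper settles these by Magma, including ruling out $15T21$ because it has no genus-$0$ polynomial tuples.

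Your Step 3 proposes running case by case through M\"uller's list. For $V\in\{A_n,S_n\}$ that list is infinite, so without the structural argument above your proof does not close.
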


\begin{proof}
	Let $G:=\Mon(g\circ h)$, and let $H<G$ be the stabilizer of a root of $g(X)-t$, so that the image of $G$ in the action on cosets of $H$ is the primitive group $V$. Let $V_1\le V$ be a point stabilizer, so that the projection $\pi:\Mon(g\circ h)\to \Mon(g)$ maps $H$ to $V_1$.
	Let $p$ be the prime such that $C_p\le U\le D_p$.
	We first claim that if $V_1$ does not have a composition factor $C_p$, then $g\circ h$ admits a Ritt move. To prove the claim, note that $V_1$ not having a composition factor $C_p$ means that $\pi^{-1}(V_1)=H$ has only a single composition factor $C_p$.
	This implies that there cannot be more than one conjugacy class of subgroups of $H$ such that the induced coset action of $H$ has image $C_p$ or $D_p$, for if there were two such subgroups $S_1,S_2\le H$, their intersection would necessarily be of index $p^2$ in $H$, yielding a quotient of $H$ (embedding into $D_p\wr D_p$) with more than one composition factor $C_p$, a contradiction.
	But on the other hand, the group $G$ has a subgroup $T$ with coset image $C_p$ or $D_p$, namely $T=G\cap (\{1\}\times V)$. Since $V$ is neither cyclic nor dihedral, the Galois group of $g(X)-t$ over the fixed field of $T$ must contain a nontrivial, and hence transitive normal subgroup of the primitive group $V$, and thus the image of $H$ in the action on cosets of $T\cap H$ is still $C_p$ or $D_p$. This means that the stabilizer $G_1<H$ of a root of $g(h(X))-t$ must be of this form $T\cap H$ and in particular has the two distinct nontrivial overgroups $H$ and $T$, implying that $g\circ h$ has a Ritt move, thus proving the claim.
	
	Since \cite{Mul2} provides an explicit short list of possibilities for $V$ other than symmetric or alternating groups of degree $\ge 4$, we are already reduced to the case $V=S_d$, $p=2$, and a short finite list of further cases $(p,V)$ (namely with $V_1$ having a composition factor $C_p$). To treat the former case, it is helpful to note that restriction from $G$ to $H$ yields an injection on the set of subgroups of index $2$. But since the abelianizations of $V=S_d$ and $V_1=S_{d-1}$ have the same $2$-rank, so do the abelianizations of $G$ and of $H$. Thus, restriction from $G$ to $H$ in fact yields a bijection on the set of index-$2$ subgroups, again implying that the point stabilizer $G_1<G$ has (an overgroup of index $2$ in $G$, and hence) two distinct nontrivial overgroups.
	
	Treating the latter case amounts to a short Magma computation, identifying subgroups $G\le D_p\times V$ (inside the finite list of candidate cases) acting as transitive subgroups of $D_p\wr V$ (more precisely, having a chain $G>H>G_1$ of subgroups with coset images $V$, and $C_p$ or $D_p$ respectively), but fixing only one nontrivial partition. Apart from the possibilities given in Case \ref{diagimplritt_exccase} (which can be verified with Magma to indeed occur as monodromy groups of a suitable polynomial with a {\it unique} decomposition), this leaves only one possibility, namely the group of transitive group label $15T21$ (isomorphic to $3.S_5$, embedding into $D_3\times S_5$ and having a unique maximal overgroup of the point stabilizer). This group, however, can be verified to have no generating genus-$0$ tuples corresponding to a polynomial ramification type. This concludes the proof.
\end{proof}

\begin{rmk}
	\label{rmk:diagimplritt}
	\begin{itemize}
		\item[a)]
		The final result of Theorem \ref{thm:mainres} shows that the conclusion of Proposition \ref{prop:diagimplritt}  holds unchanged even when dropping assumptions a) and b).
		\item[b)] Note that the assumption that $G:=\Mon(g\circ h)$ is in fact the geometric monodromy group of a polynomial was not used in the proof for the case $\Mon(g)=A_n,S_n$ for $n\ge 6$ (and for $n=5$, only to rule out the one further exceptional case $G=15T21$).
	\end{itemize}
\end{rmk}

\subsection{Topological ideas: coalescing of branch points and braid group action}
\label{sec:topol}
A useful idea to lower-bound the monodromy group of a polynomial $f$ with $r\ge 4$ branch points and branch cycles $\pi_1,\dots, \pi_r$ (with $\pi_1\cdots\pi_r=1$) is the following observation: assume that $\pi_1$ is the inertia group generator at infinity, i.e., is a full cycle in $\Mon_{\overline{k}}(f)$. Of course, 
$\langle \pi_1,\dots, \pi_{r-1}\rangle \supseteq \langle \pi_1,\dots, \pi_{r-2}\rangle$, where the left side equals $\Mon_{\overline{k}}(f)$, and the right side is still a transitive subgroup of $\Mon_{\overline{k}}(f)$ (due to the presence of the full cycle $\pi_1$);  the group $\langle\pi_1,\dots, \pi_{r-2}\rangle$ is thus the monodromy group of a polynomial\footnote{That the tuple $(\pi_1,\dots, \pi_{r-2}, \pi_{r-1}\pi_r)$ indeed yields the monodromy group of a {\it polynomial} can be seen easily from the fact that the full cycle $\pi_1$ is contained, together with the fact that the genus given by the tuple $(\pi_1,\dots, \pi_{r-2}, \pi_{r-1}\pi_r)$ is upper bounded by the one given by $(\pi_1,\dots, \pi_{r-2}, \pi_{r-1}, \pi_r)$ (which however is zero); the latter fact can easily be seen combinatorially from the Riemann-Hurwitz genus formula \eqref{rh_formula}, or from the topological interpretation via deformation of covers.} with branch cycles $\pi_1,\dots, \pi_{r-2}, \pi_{r-1}\pi_r$, which topologically corresponds to a deformation of a family of covers $\mathbb{P}^1\to \mathbb{P}^1$, letting the $(r-1)$-th and $r$-th branch point coalesce, see e.g., \cite{Cou96}. 

Note furthermore that the branch cycles of a polynomial map $f$ of degree $d$ and with $r$ branch points are only uniquely defined up to a relabeling of the points $1,\dots, d$ (i.e., conjugation in $S_d$) and action of the {\it Hurwitz braid group} $B_r$. The latter is defined as the group generated by braids $\beta_1,\dots, \beta_{r-1}$ fulfilling the relations 
$$ \beta_{i}\beta_j = \beta_j\beta_i, \text{ for all } i<j-1,$$
$$\beta_{i}\beta_{i+1}\beta_i = \beta_{i+1}\beta_{i}\beta_{i+1}, \text{ for all } 1\le i\le r-2, \text{ and }$$
$$\beta_1\beta_2\cdots \beta_{r-1}\beta_{r-1}\cdots\beta_1 = 1;$$ and acting on $r$-tuples of branch cycles via $$(\pi_1,\dots, \pi_r)^{\beta_i} := (\pi_1,\dots, \pi_{i-1}, \pi_i\pi_{i+1}\pi_{i}^{-1}, \pi_i,\dots, \pi_r).$$
See \cite[Chapter III.1]{MM} for the topological interpretation of this action.
In our applications, applying the braid group action can be useful for polynomials $f=g\circ h$ where $g$ is the generic type $S_4$-polynomial. 
%(i.e., with four branch points, of ramification type $[4]$, $[2,1^2]$, $[2,1^2]$, $[2,1^2]$). 
It is well-known, and  easy to  computationally verify in  degree $4$,  that the $4$-tuples $(\pi_1,\dots, \pi_4)$ of generic type as above lie in a single orbit of the braid group. Since the braid group action does not change the group generated by the respective $r$-tuple, we may therefore assume without loss of generality that the branch cycles for the generic type $S_4$-polynomial are $(1,2,3,4), (1,2), (1,3), (1,4)$ (with multiplication of permutations defined from right to left), and therefore letting either the second and third or the third and fourth branch point coalesce (in the sense explained above) yields an $S_4$-polynomial of special type.

\section{Solvable case: proofs of Theorems \ref{thm:affinesho}, \ref{thm:affS4I}, \ref{thm:affS4II} and \ref{thm:S4S4}}
Let $k$ be an algebraically closed field of characteristic $0$. We shall use implicitly Lemma \ref{lem:direct} and Remark \ref{rmk:readramel} several times when dealing with powers of inertia group generators.
\subsection{Proof of Theorem \ref{thm:affinesho}}
Suppose first that ${\rm Mon}(h)=C_p$. Let $u$ be the finite branch point of $h$. Assume we are not in the following cases:
\begin{itemize}
	\item ${\rm Mon}(g)=C_q$ and $u$ is a ramification point of $g$ -- equivalently, $g \circ h$ is linearly equivalent to $X^{pq}$;
	\item ${\rm Mon}(g)=D_q$, $p=2$, and $u$ is a special point of $g$ -- equivalently, $g \circ h$ is linearly equivalent to $T_{2q}$;
\end{itemize}
If $g(u)$ is a non-branch point of $g$, then the inertia group generator at $g(u)$ yields an element $x\in C_p^q$ of support of size $1$, which implies that $\Gamma = \langle C_q\cdot x\rangle=C_p^q$. Assume therefore from now on that $g(u)$ is a branch point of $g$ (in particular, due to the first excluded case above, this implies that $\Mon(g)=D_q$).
Squaring the inertia group generator at $g(u)$ as in Lemma \ref{lem:direct} yields an element  $x\in C_p^q$, commuting with a certain reflection of $D_q$ (namely, the projection to $D_q$ of the same inertia group generator), and with $|\supp(x)|\in \{1,2\}$. If $|\supp(x)|=1$, we have $\Gamma=C_p^q$. If $|\supp(x)|=2$, the above commuting implies $x={\bf e}_i+{\bf e}_j$ as an element of $\mathbb{F}_p^q$, and thus:
\begin{itemize}
	\item If $p$ is odd, then by Lemma \ref{lem:Szodd}, we obtain $\Gamma=C_p^q$.  
	\item If $p=2$, then by Lemma \ref{lem:Szodd}, we have ${\rm Aug}(C_2^q) \leq \Gamma$. However, by Lemma \ref{lem:fullcyindiag}, $(1,\dots,1)\in \Gamma \setminus {\rm Aug}(C_2^q)$, and hence $\Gamma=C_2^q$.  
\end{itemize}

Suppose next that ${\rm Mon}(h)=D_p$ for a prime $p \geq 3$. Then, unless $g \circ h$ is linearly equivalent to $T_{pq}$, the polynomial $g \circ h$ has a branch point, whose inertia generator, or its square, yields an element of $\Gamma/\soc(\Gamma)$ whose support is of size $1$ or $2$ or $4$, and is invariant under a reflection. In the first case, we have $\Gamma/\soc(\Gamma)=C_2^q$. In the second case,  $\Gamma/\soc(\Gamma)$ contains ${\rm Aug}(C_2^q)$ by Lemma \ref{lem:Szodd}. In the third case, $q \geq 5$ and by Lemma \ref{lem:ffkf_new},
$\Gamma/\soc(\Gamma)$ also contains ${\rm Aug}(C_2^q)$. Applying Lemmas \ref{lem:normal} and \ref{lem:support}, it follows that $\soc(\Gamma)=C_p^q$. Finally, in the case where $\Gamma/\soc(\Gamma)={\rm Aug}(C_2^q)$, it follows that $\Gamma=\{ (a_k)_{k=1}^q \in D_p^q \mid a_1 \cdots a_q \in C_p \}$ \footnote{For $n \geq 2$, let $\mathcal{L}_{p,n}=\{ (a_k)_{k=1}^n \in D_p^n \mid a_1 \cdots a_n \in C_p \}$.  Denote by $D_p^n \times_{C_2} D_p$ the fiber product along the canonical epimorphisms $D_p^n \twoheadrightarrow D_p^n/\mathcal{L}_{p,n}=C_2$   and $D_p \twoheadrightarrow D_p/C_p=C_2$. Then
	$\mathcal{L}_{p,n+1}\cong D_p^n \times_{C_2} D_p$.}.
\begin{rmk}\label{rem:more-cases}
	When $\Mon(h)=C_p$, the conclusion $C_p^q \leq \Gamma$ also holds in the following cases:
	\begin{itemize}\item $p,q$ are arbitrary integers, and $\Mon(g)=C_q$. The proof is the same as above.
		\item $p \geq 3$ is prime, $q$ is an odd integer, and $\Mon(g)=D_q$. Let $x \in C_p^q$ be as in the first part of the proof above. If $|{\rm supp}(x)|=1$, then $C_p^q \leq \Gamma$. If $|{\rm supp}(x)|=2$, then, by Lemma \ref{lem:Szodd}, we also deduce that $C_p^q \leq \Gamma$.
	\end{itemize}
\end{rmk}
\subsection{Proof of Theorem \ref{thm:affS4I}}
By replacing $g,h$ with $g\circ \mu,\mu^{-1}\circ h\circ\nu$, resp., for $\mu,\nu\in k[X]$ of degree $1$, we may assume $h=X^p$ or $T_p$.
\subsubsection{Assume $h=T_p$.}  
First, as soon as some inertia group generator $c\in \Mon(g\circ h)$ powers to an involution $c^m\in \Gamma$ supported on at most $3$ blocks, Lemma \ref{lem:havzero} (applied to the $\mathbb{F}_2[S_4]$ module $\Gamma/(C_p^4\cap \Gamma)$) yields that $\Gamma/(C_p^4\cap \Gamma)$ contains the augmentation subgroup. Lemma \ref{lem:normal}(1) and Lemma \ref{lem:support}a) then allow to conclude $\Gamma\supseteq C_p^4$.
The only situations in which the condition $|\supp(c^m)|\in \{1,2,3\}$ is not fulfilled for any branch point are the following.
\begin{itemize}
	\item[1)] When $g$ is the special $S_4$-polynomial and both branch points of $T_p$ are the preimages of the branch point of $g$ of ramification type $[3,1]$ (i.e., Case \ref{agls4_1_nonexc}\ref{agls4_1_nonexc2} of the theorem; indeed, here the inertia group generator $c$ fulfills that $c^3\in \Gamma$ has support of size $4$). 
	\item[2)] When both branch points of $T_p$ are special points over some branch point of $g$ of ramification type $[2,1,1]$.
\end{itemize}
Moreover, even in the above cases, one necessarily has  $\dim(\Gamma\cap C_p^4)\ge 3$ by Lemma \ref{lem:snTp} (applied with $n=4$, $U=\Mon(h)$, $V=\Mon(g)$ and $G=\Mon(g\circ h)$). Note here that the exceptional cases of Lemma \ref{lem:snTp} where $\Mon(g\circ h)$ embeds into $\Mon(g)\times \Mon(h)$, and where $\Mon(g\circ h)/(\Gamma\cap C_p^4)\cong \GL_2(3)$ are both impossible; indeed, the former would imply a Ritt move for $g\circ h$ by Proposition \ref{prop:diagimplritt} whereas a Ritt move involving both $T_p$ and an $S_4$-polynomial does not exist by Ritt's theorems; The latter would imply that all $4$-cycles of $\Mon(g)\cong S_4$ would lift to order $8$ elements modulo $(\Gamma\cap C_p^4)$ whereas the fact that the inertia group generator at infinity is a $4p$-cycle shows that this element remains of order $4$ modulo $(\Gamma\cap C_p^4)$.

The only remaining task to complete the proof of the theorem for $h=T_p$ is then to show that in Case 2) above, one has $\Gamma\supseteq C_p^4$ as long as the two finite branch points of $T_p$ are special points over two {\it different} $[2,1,1]$ branch points of $g$ (in particular, $g$ is necessarily the generic $S_4$ type here), since indeed the case when both lie over the {\it same} branch point is Case \ref{agls4_1_nonexc}\ref{agls4_1_nonexc3} of the theorem. 
To deal with this remaining case, we use the setup explained in Section \ref{sec:topol}. Namely, letting $\gamma_1,\dots, \gamma_4$ denote the branch points of $g\circ T_p$ and $\pi_1,\dots, \pi_4$ the respective branch cycles, we may assume without loss of generality that projection $\Mon(g\circ T_p)\to \Mon(g)\cong S_4$ yields the tuple $(\overline{\pi}_1,\dots, \overline{\pi}_4) = ((1,2,3,4), (1,2), (1,3), (1,4))$. Exactly two of the points $\gamma_2,\gamma_3,\gamma_4$ are extended by a branch point of $T_p$. Assume first that $\gamma_2$ is extended by a branch point of $T_p$. Then the triple $(\pi_1,\pi_2,\pi_3\pi_4)$ projects to a triple of ramification type $([4], [2,1^2], [3,1])$ in $S_4$. Moreover, $\langle \pi_1, \pi_2 \rangle$ is a transitive subgroup of $\Mon(g\circ T_p)\le D_p\wr S_4$, and is the monodromy group of a polynomial $\tilde{f}=\tilde{g}\circ \tilde{h}$, where $\tilde{g}$ is the special $S_4$-polynomial and $\Mon(\tilde{h})\le D_p$, see Section \ref{sec:topol}. Since the second inertia group generator is still $\pi_2$, this branch point is still extended by a branch point of ramification index $2$, whence necessarily $\Mon(\tilde{h})=D_p$. The other branch point of $\tilde{h}$ of ramification index $2$ must necessarily lie over the third branch point of $\tilde{f}$, whose inertia group generator however projects to cycle type $[3,1]$ in $S_4$. For this configuration, we have however already seen that $\Gamma\supseteq C_p^4$. The same must therefore hold in $\Mon(f)$. If $\gamma_2$ is {\it not} extended by a branch point of $T_p$, the analogous argument with the triple $(\pi_1,\pi_2\pi_3, \pi_4)$ gives the result. This concludes the case $h=T_p$.

\subsubsection{Assume $h=X^p$.} By Lemma \ref{lem:havzero}, one has $\dim(\Gamma\cap C_p^4)\ge 3$ as soon as there exists an inertia group generator $c\in \Mon(g\circ X^p)$ powering to an element $c^m\in\Gamma$ with $1\le |\supp(c^m)|\le 3$. Since the ramification indices at finite branch points of $g$ are all $2$ or $3$, the latter condition is certainly fulfilled by the inertia group generator at the unique finite branch point of $g\circ X^p$ with ramification index divisible by $p$, as soon as $p\ge 5$.
For $p\in \{2,3\}$, all possible configurations could in principle be calculated with Magma, but theoretical arguments are given below as well. For $p=3$, the only situation in which Lemma \ref{lem:havzero} is not directly applicable arises when the finite branch point $u=0$ of $X^3$ is a special point of $g$ lying over a branch point of ramification index $3$. This enforces that $g$ is the special $S_4$-polynomial, and then corresponds to Case \ref{agls4_1_exc1} of the theorem. For all $p\ne 2$, Lemma \ref{lem:fullcyindiag} moreover implies that as soon as $\dim(\Gamma\cap C_p^4)\ge  3$, one even has $\Gamma\supseteq C_p^4$, due to the containment of both the augmentation subgroup $\textrm{Aug}(C_p^4)$ and the diagonal submodule $\textrm{diag}(C_p^4)$. 

We are thus left with $h=X^2$.
Let $c$ be the inertia group generator at the branch point $g(0)$ of $g\circ X^2$, and again let $m$ be the smallest natural number such that $c^m\in \Gamma$. If $|\supp(c^m)|\in \{1,2,3\}$, then once again $\Gamma\supseteq \textrm{Aug}(C_2^4)\cong C_2^3$  by Lemma \ref{lem:havzero}; and if $\supp(c^m)\in \{1,3\}$, then even  $\Gamma=C_2^4$ (since $c^m$ is then an odd permutation, i.e., not contained in $\textrm{Aug}(C_2^4)$). The only cases left to consider are therefore when the branch point $0$ of $h=X^2$ extends a branch point of $g$ of ramification type $[2,1,1]$ (since indeed $c^m=1$ in case $0$ is a special point, and $|\supp(c^m)|=2$ in case $0$ is the ramification point over this $[2,1,1]$-branch point).
If $g$ is the generic type $S_4$-polynomial, a coalescing argument similar to the one carried out above reduces to the situation $\tilde{f}=\tilde{g}\circ X^2$ where $\tilde{g}$ is the special $S_4$-polynomial and the branch point $0$ of $X^2$ extends the $[3,1]$-branch point of $\tilde{g}$; here we already know $\Gamma= C_2^4$. There remains the case in which $g$ is a special $S_4$-polynomial and $0$ extends the unique $[2,1,1]$ branch point of $g$. The case where $0$ is a special point corresponds to Case \ref{agls4_1_exc2} of the theorem, and the monodromy group can directly be checked to equal ${\rm GL}_2(3)$ with Magma. The case where $0$ is a ramification point corresponds to Case \ref{agls4_1_nonexc}\ref{agls4_1_nonexc1} of the theorem, and Lemma \ref{lem:havzero} yields $\Gamma\supseteq \textrm{Aug}(C_2^4)$.

\subsection{Proof of Theorem \ref{thm:affS4II}}
As in the proof of Theorem \ref{thm:affS4I}, we replace $g,h$ by their composition with linear polynomials to assume $g=X^p$ or $T_p$.  Note first that, in order to prove $\Gamma=S_4^p$, it suffices to prove $\Gamma/(\Gamma\cap A_4^p)=C_2^p$. Indeed, due to Lemma \ref{lem:normal}(1) and Lemma \ref{lem:support}(a), the latter condition implies $\Gamma/(\Gamma\cap V_4^p) = S_3^p$; and analogously, this condition, due to Lemma \ref{lem:normal}(2) and Lemma \ref{lem:support}(a), implies $\Gamma\supseteq V_4^p$. 

Note furthermore that $h$ has at least one finite branch point $u\in k$ whose inertia group is generated by a transposition in $\Mon(h)$. Let $c\in \Mon(X^p\circ h)$ (resp., $\Mon(T_p\circ h)$) be the inertia group generator at $u^p$ (resp., at $T_p(u)$), and let $m\in \mathbb{N}$ be minimal such that $c^m$ is contained in $\Gamma$ and of order $\le 2$. As soon as $c^m$ is a transposition, we know that $\Gamma/(\Gamma\cap A_4^p) = C_2^p$ and hence $\Gamma=S_4^p$ by Lemma \ref{lem:support}.
If furthermore $p>2$, then $\Gamma/(\Gamma\cap A_4^p) = C_2^p$ and hence $\Gamma=S_4^p$ follows similarly  as soon as $c^m$ is a double transposition supported on two blocks: Indeed,  $\Gamma/(\Gamma\cap A_4^p)$  then contains $\textrm{Aug}(C_2^p)$ by Lemma \ref{lem:Szodd}, but also contains the diagonal by Lemma \ref{lem:fullcyindiag}, which is generated by the $p$-th power of the inertia group generator at $\infty$. This leaves us with only the following cases to consider:
\begin{itemize}
	\item[Case 1.] $p\ge 3$, $g=X^p$. In this case the only situations left to consider are where 
	either no transposition branch point of $h$ lies over a non-branch point of $g$, or 
	(all) three such points lie over the same non-branch point of $g$. 
	\begin{itemize}[leftmargin=*]
		\item[Case 1.1.] The former case is only possible when $h$ is a special $S_4$-polynomial and the transposition branch point $u$ of $h$ equals the ramification point $u=0$ of $g$. Since in this case, the unique branch point of $h$ of ramification type $[3,1]$ lies over a non-branch point of $X^p$, the respective inertia group generator is an element $x$ of $\Gamma\cap A_4^p$ supported on only one block, from which it follows that $\Gamma\supseteq \langle C_p\cdot x\rangle=A_4^p$. This concludes (the case $p\ge 3$ of) Case \ref{agls4_2_nonexc}\ref{agls4_2_nonexc1} of the theorem.
		\item[Case 1.2.] The latter case implies that $h$ is a generic $S_4$-polynomial with three transposition branch points $u,v,w$ fulfilling $u^p=v^p=w^p$. For $p=3$,  one gets monodromy group $S_4\times C_3$ by a direct computation (e.g.\ with Magma). This corresponds to the ``Ritt move" Case \ref{agls4_2_exc} of the theorem. Assume from now on $p\ge 5$. Here, since we have an inertia group generator $c\in \Gamma$ which is a triple transposition supported on exactly three blocks, it follows from Lemma \ref{lem:normal}(1) (applied with the quotient $\Gamma/(\Gamma\cap V_4^p)\le S_3^p$ in the role of $K$) and Lemma \ref{lem:support}b) that $C_3^p\le \Gamma$ and hence $A_4^p\le \Gamma$.\footnote{Note that there are indeed cases here where $[S_4^p:\Gamma]$ is arbitrarily large. E.g., whenever $p=2^q-1$ is a Mersenne prime, a suitable choice of the preimages of $u^p$ leads to the projection of the corresponding inertia group generator to $S_4^p/A_4^p=C_2^p$ generating the (cyclic!) binary Hamming code of length $2^q-1$, which has dimension $2^q-q-1$. A suitable choice of branch points therefore leads to a case with $[S_4^p:\Gamma]=2^q$.} This concludes Case \ref{agls4_2_nonexc}\ref{agls4_2_nonexc3} of the theorem.
	\end{itemize}
	\item[Case 2.] $p=2$, i.e., $g=X^2$; and moreover no inertia group generator of $g\circ h$ powers to a transposition. This is only possible if one branch point $u$ of $h$ with transposition inertia is the ramification point $u=0$ of $g$; and moreover $h$ is either the special $S_4$-polynomial, or $h$ is generic with two more finite branch points $v,w$ mapping to the same point under $g=X^2$. In both cases, the group structure can be verified directly with Magma. The former case behaves as in the case $p\ge 3$, and together with it, forms Case \ref{agls4_2_nonexc}\ref{agls4_2_nonexc1} of the theorem, whereas the latter case is Case \ref{agls4_2_nonexc}\ref{agls4_2_nonexc6} of the theorem.
	\item[Case 3.] $p\ge 3$, $g=T_p$.
	In this case, we can additionally use Lemma \ref{lem:ffkf_new}, which yields that $\Gamma/(\Gamma\cap A_4^p)\supseteq \textrm{Aug}(C_2^p)$ (and hence $\Gamma=S_4^p$ by the same argument as at the beginning of the proof) as soon as $\Gamma$ contains a quadruple transposition supported on four blocks which are invariant as a set under a suitable reflection of $D_p$. We distinguish further between the cases ``$h$ special" and ``$h$ generic" $S_4$-polynomials.
	\begin{itemize}[leftmargin=*]
		\item[Case 3.1.] $h$ is special. Here the only case left to consider is when the unique transposition branch point $u$ of $h$ is a special point of $T_p$ (i.e., $u=\pm 2$). In this case, the branch point of $h$ of ramification type $[3,1]$ is either an unramified point of $T_p$, or a ramification point of ramification index $2$; if $c\in \Mon(T_p\circ h)$ denotes the respective inertia group generator, it follows in both cases that $c^2\in \Gamma\cap A_4^p$ is an element of order $3$ supported on at most two blocks. By Lemma \ref{lem:Szpee}, this implies that the $\mathbb{F}_3[C_p]$ module $(\Gamma\cap A_4^p)/(\Gamma \cap V_4^p)$ contains $\textrm{Aug}(C_3^p)$. By Lemma \ref{lem:normal}(2) and Lemma \ref{lem:support}a), one has $\Gamma\supseteq V_4^p$. This concludes Case \ref{agls4_2_nonexc}\ref{agls4_2_nonexc2} of the theorem.
		\item[Case 3.2.] $h$ is generic. Here the only cases left to consider are the following: 
		\begin{itemize}[leftmargin=*]
			\item[Case 3.2.1.] In case at least one of the three finite branch points $u,v,w$ of $h$ lies over a non-branch point of $T_p$, all three of them must lie over the same non-branch point, i.e., $T_p(u)=T_p(v)=T_p(w)\notin\{\pm 2\}$. For $p=3$, the group structure may again be directly verified with Magma, yielding Case \ref{agls4_2_nonexc}\ref{agls4_2_nonexc5} of the theorem. Hence, assume $p\ge 5$. Here, it follows from Corollary \ref{cor:dp_smallweight} that $\Gamma/(A_4^p\cap \Gamma) = C_2^p$.
			\item[Case 3.2.2.] In case all of $u,v,w$ lie over branch points of $T_p$, two of them, say $u,v$, must be ramification points over the same branch point of $T_p$.
			If, however, $w$ is not also a ramification point over the same branch point, then the respective inertia group generator $c\in \Mon(T_p\circ h)$ fulfills that $c^2\in \Gamma$ is an involution supported on exactly four blocks, invariant as a set under a reflection of $D_p$ (namely the projection of $c$ to $D_p$). As remarked above, this case would yield $\Gamma=S_4^p$, so that we are after all left with the case that all of $u,v,w$ are ramification points over the same branch point of $T_p$ (i.e., Case \ref{agls4_2_nonexc}\ref{agls4_2_nonexc4} of the theorem). In this case, $c^2\in \Gamma$ is an involution whose support is of size $6$ (which has to be less than $p$). 
			It then follows from Lemma \ref{lem:normal}(1) (applied with the quotient $\Gamma/(\Gamma\cap V_4^p)\le S_3^p$ in the role of $K$) and Lemma \ref{lem:support}b) that $\Gamma\supseteq A_4^p$.
		\end{itemize}
	\end{itemize}
	
\end{itemize}

\subsection{Proof of Theorem \ref{thm:S4S4}}
Since $h$ has at most three finite branch points (all of ramification index $2$ or $3$), it follows that, if there is at least one branch point of $g\circ h$ which is a non branch point of $g$, then either $\Gamma/(\Gamma \cap A_4^4)$ contains an involution (namely, the third power of an inertia group generator) of support $\le 2$, or $\Gamma\cap A_4^4$ contains an element of order $3$ and support $1$. From this, it follows quickly that $\Gamma\supseteq A_4^4$, whence we may restrict to the case that every branch point of $g\circ h$ is a branch point of $g$, and in particular $g\circ h$ has at most four branch points.

Now the assertion is most conveniently checked using a direct Magma search for genus zero $3$- and $4$-tuples inside $S_4\wr S_4\le S_{16}$.

\section{Nonsolvable case: proofs of Theorems \ref{thm:arbnons}, %\ref{thm:nonsnons}, 
	\ref{thm:nonsaffI}, %\ref{thm:nonsaffII}
	and \ref{thm:nonsS4I}} %and \ref{thm:nonsS4II}}
	\label{sec:moncomplength2}
	Let $k$ be an algebraically closed field of characteristic $0$.
	
	In the following, we call an indecomposable polynomial $g\in k[X]$ (as well as its monodromy group $\Mon(g)$) {\it exceptional}, if its monodromy group is nonsolvable, but does not contain the alternating group. Recall that the exceptional indecomposable polynomials are known due to \cite{Mul2}, and their monodromy groups belong to a short finite list, cf.\ Theorem \ref{prop:primmon}.

	\subsection{Proof of Theorem \ref{thm:arbnons}}
	This follows from \cite[Corollary 4.4]{KNR24}, except in the case when
	there is a Ritt move for $g \circ h$, in which case necessarily $g=X^p$ up to linear equivalence, by Ritt's theorems. 
	Finally, the assertion $\Mon(g\circ h)\cong \Mon(g)\times \Mon(h)$ for the latter case follows directly from Proposition \ref{prop:rittimpldiag}.

	\subsection{Proof of Theorem \ref{thm:nonsaffI}} Set $n:=\deg(g)$. We replace $g,h$ by their compositions with linear polynomials to assume $h=X^p$ or $T_p$. 
	\subsubsection{Assume $\mon(g)=S_{n}$ or $\mon(g)=A_{n}$.}
	The case where there is a Ritt move for $g \circ h$ is covered by Theorem \ref{thm:arbnons}. In the following, we may assume there is no Ritt move for $g \circ h$. 
	Assume that $\Gamma$ does not contain a subgroup $C_p^{n-1}$. Then
	by Lemma \ref{lem:snTp}, either $(p,n)=(3,6)$ or
	$\Mon(g\circ h)$ embeds into $\Mon(h)\times \Mon(g)$. The former case can in fact be excluded since the nonsplit extension $3.S_6\le S_{18}$ does not contain an $18$-cycle, and hence cannot be the monodromy group of a polynomial.
	
	Since we have however excluded the existence of a Ritt move, Proposition \ref{prop:diagimplritt} 
	yields $p=3$, $h=X^3$ and $\Mon(g)\cong A_5$ as the only possibility; cf.\  Table \ref{table:sporcases}.
	
	\subsubsection{Assume $\Mon(g)$ is exceptional} 
	Assume first that $h=X^p$. Since the list of polynomial ramification type of exceptional nonsolvable monodromy groups is known explicitly (cf.\ \cite{Mul2}), the verification for $p=2,3$ amounts to a Magma calculation. (The calculation runs through genus-$0$ tuples of elements in $C_p\wr \Mon(g)$ that project onto one of the possible tuples in $\Mon(g)$, and keeps note of the subgroups generated by such tuples). Assume henceforth that $p\ge 5$. 
	The precise list of exceptional nonsolvable ramification types (in particular, the fact that, for all exceptional polynomial monodromy tuples, the ramification indices at  finite branch points  have only the prime divisors $2$ and $3$) yields the existence of an element of order $p$ in $\ker(\Mon(g\circ h)\to \Mon(g))$ whose support (as an element of $C_p^{n}$) is strictly smaller than $\{1,\dots, n\}$; more specifically, to obtain such an element, one takes an appropriate power, using Lemma \ref{lem:direct} and Remark \ref{rmk:readramel}, of the unique inertia group generator at a finite branch point of $g\circ h$ with ramification index divisible by $p$. But by \cite{Mor80}, for $p\ge 5$, the only nontrivial submodules of the $\mathbb{F}_p$ permutation module of the simple group $\soc(\Mon(g))$ are the diagonal and augmentation. Since we have already identified a nondiagonal element of order $p$, we obtain that $\ker(\Mon(g\circ h)\to \Mon(g))$ contains augmentation, as claimed.
	
	Assume now that $h=T_p$, $p\ge 3$. 
	The case $p=3$ is dealt with by a direct Magma computation, see the proof of Theorem \ref{thm:nonsS4I}, and yields exactly one exceptional monodromy group $\Mon(g\circ h)=\textrm{39T248}$ (with $\Mon(g)=\PSL_3(3)$ and further specifications as given in Table \ref{table:sporcases}). Therefore, let $p\ge 5$ from now on. 
	Assume on the contrary that $|\Gamma \cap C_{p}^n| < p^{n-1}$. Set $\tilde{G}:=\Mon(g\circ h)/(\Gamma\cap C_p^n) \le C_2\wr \Mon(g)$. Then $\tilde{\Gamma}:=\ker(\tilde{G}\to \Mon(g)) =\Gamma/(\Gamma \cap C_{p}^n)$ is of size at most $|\tilde \Gamma| \le 2$ by Lemma \ref{lem:normal}(1) and Lemma \ref{lem:support}b), since $\tilde \Gamma$ is invariant under the conjugation action of  $\Mon(g)$ (which is primitive on $\{1,\ldots,n\}$). Note that $\tilde G$ is the monodromy group of $g\circ \psi$, where $\psi$ is the degree-$2$ rational function\footnote{Namely, ramified exactly at the two finite branch points $\pm 2$ of $T_p$, and unramified at $\infty$.} parameterizing the unique quadratic subextension of the splitting field of $T_p(X)-t$.  A Magma computation,\footnote{One may assume for the computation that the set of branch points of $g\circ \psi$ is the same as that of $g$, since putting one or two branch points of $\psi$ over a non-branch point of $g$ results in an element of $\tilde{\Gamma}$ with support of size $1$ or $2$, yielding $|\tilde{\Gamma}|\ge 2^{n-1}$, as seen many times before in analogous situations.} 
	using the list of exceptional ramification types from \cite{Mul2}, now yields the following conclusion: %  intermediate result:

	Whenever $|\tilde{\Gamma}| \leq 2$, the extension $\tilde{\Gamma}.\Mon(g)$ is split.

	In total, we have therefore reduced to the case that the extension $\tilde{\Gamma}.\Mon(g)$ is split. This means that $\Gamma\cap C_p^n$ becomes a submodule of a module ${\rm Ind}_H^{\Mon(g)} \chi$, where $H\le \Mon(g)$ is a point stabilizer and $\chi$ is either the trivial character or a quadratic character of $H$, cf.\ Section \ref{sec:setup}. In the former case, ${\rm Ind}_H^{\Mon(g)} \chi$ is the permutation module, and since $p\ge 5$, it follows from \cite{Mor80} that $\dim(\Gamma\cap C_p^n) \in \{1,n-1,n\}$. The same assertion holds for quadratic $\chi$  by a direct Magma verification. More precisely, for all $p \divides |\Mon(g)|$ with $p\geq 5$, we  explicitly construct and decompose the induced modules. To do so  for $p\not\divides |\Mon(g)|$, we compute the corresponding complex characters which coincide with those over $\oline{\mathbb F}_p$ by \cite[Prop.\ 4.4.3]{Ser77}.
	To conclude the proof, we therefore only need to exclude the case $|\Gamma\cap C_p^n|=p$. In that case, $\Gamma$ is diagonal, i.e., $\Gamma=D_p$ or $\Gamma=C_p$.
	
	We therefore either have a) $\Mon(f) = C_p.(C_2.\Mon(g))$, where the central subgroup $C_2$ of $C_2.\Mon(g)$ acts nontrivially on $C_p$; or b) $\Mon(f)=C_p.\Mon(g)$.  We claim that in both cases  $\Mon(f)\le D_p\times \Mon(g)$, from which Proposition \ref{prop:diagimplritt} implies that $g\circ T_p$ admits a Ritt move. The latter is however impossible by Ritt's theorems.
	
	To prove the claim, set $G=\Mon(g\circ h)$, and let $H\leq G$ be a block-stabilizer in the action through $\Mon(g)$ (i.e.\ $H\leq G$ is a preimage of a point-stabilizer in $\Mon(g)$). 
	Let $N$ denote the kernel of the conjugation action $G\to \Aut(C_p)$. The image of the action of $N$ on $G/H$ is nontrivial
	since $\Mon(g)$ is nonsolvable but $G/N\leq \Aut(C_p)$ is abelian; hence this image is a nontrivial normal subgroup of the primitive group $\Mon(g)$ and hence transitive by \cite[Thm.\ 1.6A]{DM96}. Thus, $N$ acts transitively on the blocks in $G/H$.
	It follows that $G=HN$ and hence the image of the above conjugation action is $G/N\cong H/H\cap N\cong C_2$.
	
	In a) the fact that $\oline N=\ker(C_2.\Mon(g)\to \textrm{Aut}(C_p))=N/C_p$ does not contain the central subgroup $C_2$ enforces the extension $C_2.\Mon(g)$ to split; indeed, $C_2$ is disjoint from $\oline N$ and the above shows $C_2.\Mon(g)/\oline N\cong C_2$. Moreover the central extension $C_p.\oline{N}\cong C_p.\Mon(g)$
	is also split since none of the simple groups $\soc(\Mon(g))$ has a  Schur multiplier of order divisible by a prime divisor $p>3$. This gives $\Mon(f) = D_p\times \Mon(g)$.
	
	Similarly, in b), we obtain $G=C_p\rtimes \Mon(g)$, with $\Mon(g)$ acting on $C_p$ either trivially or via $C_2\le \textrm{Aut}(C_p)$. In this last case, $N=\ker(G\to\Aut(C_p))$ maps to  an index-$2$ normal subgroup $\oline N\le \Mon(g)$ such that $G=(C_p\times \oline N).C_2$, with the outer $C_2$ acting diagonally on $C_p$ and $\oline N$, which yields the desired  embedding into $D_p\times \Mon(g)$. This concludes the proof.
	
	\subsection{Proof of Theorem \ref{thm:nonsS4I}}
	We begin by noting that the cubic subextension of the Galois closure of $h(X)-t$, with an $S_4$-polynomial $h$, is always given by $\varphi(x)-t$ with a rational function $\varphi$; namely of ramification type $([3],[2,1],[2,1])$ for the special $S_4$-polynomial $h$, and  of ramification type $([2,1],[2,1],[2,1],[2,1])$ for the generic $S_4$-polynomial; indeed, this follows by projecting branch cycles in $S_4$ under the projection $S_4\to S_4/V_4\cong S_3$.
	We now consider the quotient of $\Mon(g\circ h)$ by the normal subgroup $V_4^{\Mon(g)}\cap\Gamma$. This is the monodromy group of $g\circ \varphi$, with the above rational function $\varphi\in k(x)$.
	
	First, assume $\Mon(g)\in \{A_n,S_n\}$, $n=\deg(g)$. Note that the exceptional case $\Mon(g\circ \varphi)\le 3.S_6$ of Lemma \ref{lem:snTp} cannot occur, e.g., since in the nonsplit extension $3.S_6\le S_{18}$ the $6$-cycles of $S_6$ do not lift to elements of order $12$, which would however have to be the case for the inertia group generator at infinity.  Lemma \ref{lem:snTp} therefore gives that either $\ker(\Mon(g\circ \varphi)\to \Mon(g))\supseteq C_{3}^{n-1}$ (in which case there is nothing to prove), or $\Mon(g\circ \varphi)\le \Mon(g)\times S_3$.
	Proposition \ref{prop:diagimplritt} (with Remark \ref{rmk:diagimplritt}b), since $\varphi$ is not a polynomial\footnote{The one exceptional case $G=15T21$  from Remark \ref{rmk:diagimplritt} can be excluded ad hoc; e.g., this group has no element of cycle structure $[10.5]$, which would however be needed as the inertia group generator at infinity.}) now implies that $g\circ \varphi$ has a non-unique decomposition, i.e., $g\circ \varphi = \tilde{\varphi}\circ \tilde{g}$, for some degree-$3$ function $\tilde{\varphi}$. Note that since $\infty$ has only two preimages (poles) under $\varphi$, it also has only two preimages under $g \circ \varphi$ and hence is equivalent to a Laurent polynomial. 
	It now can either be verified directly (considering ramification at infinity), or by invoking \cite[Theorem 1.1]{Pak09} that a Laurent polynomial with a non-unique decomposition cannot be the composition of functions with $S_3$- and nonsolvable monodromy.
	
	Next assume that $\Mon(g)$ is exceptional. This case can in principle be dealt with by exhaustive computation inside the wreath products $S_4\wr \Mon(g)$, since $\deg(g\circ h)$ is absolutely bounded by $31\cdot 4$; however, brute force computations inside the larger degree groups seem difficult, and we therefore give some useful simplifications. Again, consider the $3$-part $\tilde{\Gamma}$ of $\ker(\Mon(g\circ \varphi)\to\Mon(g))$ as above. For the exceptional $g$ with $\deg(g)>15$, i.e., $\Mon(g)\in \{\PGaL_3(4),M_{23},\PSL_5(2)\}$, all extensions $C_2^k.\Mon(g)\le C_2\wr \Mon(g)$ are split, whence $\tilde{\Gamma}$ becomes an $\mathbb{F}_3[\Mon(g)]$-module, and in particular a submodule of the permutation module under the simple group $\soc(\Mon(g))$. 
	%Note to self: for those simple groups the point stabilizer is a perfect group, thus justifying ``permutation module", see Section 3.2.
	But due to 
	\cite{Mor80}, either $|\tilde{\Gamma}|\ge 3^{\deg(g)-1}$ or $|\tilde{\Gamma}|=3$. We therefore only need to deal with the case of a diagonal extension $\Mon(g\circ \varphi)=S_3.\Mon(g)\le S_{3\deg(g)}$. In all cases, the only such group is the direct product, with a nonunique decomposition, which is impossible, as in the case $\Mon(g)=A_n,S_n$.
	%Now for M_{23} and $PSL_5(2)$ immediately clear that $\Mon(g\circ \varphi)\le S_3\times \Mon(g)$ (hence Ritt move, hence impossibility), but for P\Gamma L_3(4), Schur multiplier is div. by 3...
	
	For $\deg(g)\le 15$, the group $\Mon(g\circ \varphi)$ is in the realm of Magma's transitive group database, and we can directly search for cases of transitive subgroups of $S_3\wr \Mon(g)$ projecting onto $\Mon(g)$ and with block kernel acting on a block as $S_3$ such that there is (no Ritt move, i.e.) a unique maximal block system and such that $|\tilde{\Gamma}| < 3^{\deg(g)-1}$. This yields exactly the candidate groups \texttt{TransitiveGroup}$(k,j)$ for 
	\begin{align*}
(k,j)\in \{ & (39,134),(39,206),(39,218),(39,248),(33,60),  \\ & (33,69),(33,83),(33,97),(30,1650),(30,2251)\}
\end{align*} 
and now inside these a direct search for generating genus zero tuples leaves only $39T206$, $39T248$ (both with $\Mon(g)=\PSL_3(3)$) and $33T60$ (with $\Mon(g)=\PSL_2(11)$). 
Since in all cases, the $3$-part of $\Gamma$ is of order $>3$ and $\Gamma$ is invariant under the primitive group $\Mon(g)$, it follows from Lemma \ref{lem:normal}(2) with Lemma \ref{lem:support}b) that $\Gamma\supseteq V_4^{\Mon(g)}$.

\section{Higher compositions}
\label{sec:highercomp}
\subsection{Composition of three and more Chebyshev polynomials}
Let $k$ be a field of characteristic $0$. In this section we demonstrate some ways in which the results and techniques of this paper can be used to obtain lower bounds for kernels also in case of composition of more than two indecomposable polynomials.

The following theorem shows that arbitrarily long compositions of polynomials linearly equivalent to Chebyshev polynomials always have large kernel in the absence of Ritt moves.
\begin{thm}
\label{thm:comp_dihedral}
Let $f_1,\dots, f_r\in k[X]$ be polynomials of odd prime degrees $p_1,\dots, p_r$, $r\ge 2$, with $\Mon_{\overline{k}}(f_i) = D_{p_i}$, $i=1,\dots, r$, and such that $f_{i}\circ f_{i+1}\not\sim T_{p_{i}p_{i+1}}$ for all $i=1,\dots, r-1$. Set $h=f_1\circ\dots\circ f_{r-1}$, $f=h\circ f_r$, and $\Gamma=\ker(\Mon(f)\to \Mon(h))$. Then the following hold:
\begin{itemize}
	\item[a)] $\Gamma\supseteq C_{p_r}^{\deg(h)}$.
	\item[b)] $\Gamma/C_{p^r}^{\deg(h)} \supseteq C_2^{\deg(h)\cdot (1-\frac{1}{p_{r-1}})}$.
\end{itemize}
\end{thm}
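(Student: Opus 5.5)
We work over $\overline{k}$ and assume, after linear changes, that $f_r=T_{p_r}$ and $f_{r-1}\sim T_{p_{r-1}}$. Write $p:=p_r$, $q:=p_{r-1}$, $n:=\deg(h)$, $G=\Mon(f)\le D_p\wr \Mon(h)$ and $A:=\Mon(h)$, transitive of degree $n$. The plan is to run the argument from the dihedral part of the proof of Theorem \ref{thm:affinesho} inside the larger block structure of $h$. First we find short-support elements of $\Gamma/(\Gamma\cap C_p^n)\le C_2^n$. Then we push them around with the action of $A$. Finally we lift the $C_2$-information to the full $C_p^n$ using Lemma \ref{lem:normal}(1).

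\textbf{Part b).} Since $f_{r-1}\circ f_r\not\sim T_{pq}$, Theorem \ref{thm:affinesho}(2) applied to $f_{r-1}\circ f_r$ shows that some finite branch point of $T_p$ is not a ramified point of $f_{r-1}$ lying over a branch point of $f_{r-1}$ whose other ramified points carry the other branch point in the Chebyshev pattern. Take $u\in\{\pm2\}$ a branch point of $T_p$ and set $w:=h(u)$.

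Let $c$ be the inertia generator of $f$ at $w$, and let $m$ be minimal with $c^m\in\Gamma$. By Lemma \ref{lem:direct} and Remark \ref{rmk:readramel}, the image of $c^m$ in $C_2^n$ is supported on the preimages of $u$ under $h$ that have odd ramification index relative to the cycle length. This gives a nonzero vector $\mathbf v\in\Gamma/(\Gamma\cap C_p^n)$.

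The main task is to show that the $\mathbb F_2[A]$-module generated by $\mathbf v$ has dimension at least $n(1-1/q)$. For this we use the block system of $f_{r-1}$. Let $h=h'\circ f_{r-1}$, so that $A\le D_q\wr \Mon(h')$ has block kernel $B=\ker(A\to\Mon(h'))$.

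By induction on $r$ (for $r-1\ge2$, applying part a) to $h$; for $r=2$, $B=D_q$ directly), $B$ contains $C_q^{n/q}$. Hence the module generated by $\mathbf v$ contains the $C_q^{n/q}$-module generated by $\mathbf v$. This module decomposes blockwise into cyclic codes of length $q$.

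Within a block, $\supp(\mathbf v)$ is nonempty and of size at most $2$, or not equal to the whole block, by the choice of $u$ above. Lemma \ref{lem:Szodd} and Lemma \ref{lem:Szpee} then give at least $\mathrm{Aug}(\mathbb F_2^q)$ on that block. Transitivity of $A$ on the blocks spreads this to every block, so the module contains $\mathrm{Aug}(\mathbb F_2^q)^{n/q}$, which has dimension $n(1-1/q)$.

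\textbf{Part a).} Part b) provides elements of $\Gamma/(\Gamma\cap C_p^n)$ whose supports intersect in exactly one point. Concretely, take vectors $\mathbf e_i+\mathbf e_j$ within one block; intersecting two of them with distinct second points leaves a single point. Lemma \ref{lem:support}a) is not directly applicable, so we verify the single-point intersection condition by hand and apply Lemma \ref{lem:normal}(1) to $K=\Gamma$. This yields $C_p^n\le\Gamma$.

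\textbf{Main obstacle.} The hardest step is the choice of $u$: it must produce a vector whose restriction to some $f_{r-1}$-block is a nonzero vector that is not the all-ones vector there. The all-ones case is exactly where we would be in the Chebyshev configuration $f_{r-1}\circ f_r\sim T_{pq}$. We will handle this by case analysis on whether $h'(f_{r-1}(u))$ is a branch point, reading off local ramification with Remark \ref{rmk:readramel}. If it is needed, we also use the coalescing argument of Section \ref{sec:topol} to reduce to the two-polynomial situation of Theorem \ref{thm:affinesho}.
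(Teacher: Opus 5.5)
There is a genuine gap in part b), at the step where you assert that the power $c^m$ of the inertia generator at $w=h(u)$ gives a \emph{nonzero} vector $\mathbf v$ with a block restriction of weight $1$ or $2$. By Lemma \ref{lem:direct} and Remark \ref{rmk:readramel}, on the blocks over $y\in h^{-1}(w)$ the element $c^m$ is trivial unless $y=\pm 2$. For $y=\pm2$ its $C_2$-image is $m/e_h(y) \bmod 2$. All finite ramification indices of $h$ are powers of $2$, so $\mathbf v\neq 0$ requires $e_h(u)$ or $e_h(-u)$ to be maximal in its fibre.

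This fails for both $u=\pm 2$ in admissible configurations. Take $r=3$ and $f_3=T_p$. Choose $f_2\sim T_q$ with $f_2(2)\neq f_2(-2)$ not branch points of $f_2$. Choose $f_1\sim T_{p_1}$ whose two special points are exactly $f_2(\pm2)$. Then $f_2\circ f_3\not\sim T_{qp}$, since it has four finite branch points. Also $f_1\circ f_2\not\sim T_{p_1q}$, since the branch points of $f_2$ are not special points of $f_1$. However, $e_h(\pm 2)=1$, while $h^{-1}(h(\pm 2))$ also contains the $f_2$-preimages of the ramified points of $f_1$, with $e_h\in\{2,4\}$. So $m$ is even and $c^m=1$.

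Over every other finite branch point, $T_p$ is unramified on the fibre, and the inertia at infinity only yields rotations. Hence no power of any inertia generator contributes anything to $\Gamma/(\Gamma\cap C_p^n)$, although the theorem says this quotient has dimension at least $n(1-1/q)$. The $2$-part is invisible to inertia powers and has to be produced by commutators, so the case analysis you postpone to your last paragraph cannot be completed along these lines. (Separately, Lemmas \ref{lem:Szodd} and \ref{lem:Szpee} only cover weight $2$. Restrictions of weight $3$ or $4$ occur when $\pm2$ lie in the same $f_{r-1}$-block. Your fallback that a support that is not the whole block suffices is false in general: for $q=7$, $1+x+x^3$ generates the $4$-dimensional Hamming code.)

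The missing idea, which is the paper's route, is to transport the two-step result through the block kernel rather than a single vector. Write $h=h_2\circ f_{r-1}$ and work in $\Mon(f)/(\Gamma\cap C_p^n)=\Mon(h\circ\varphi_r)\le H\wr\Mon(h_2)$. Here $\varphi_r$ is the quadratic rational function attached to the quadratic subfield of the splitting field of $f_r(X)-t$, and $H=\Mon(f_{r-1}\circ\varphi_r)$. Theorem \ref{thm:affinesho} gives ${\rm Aug}(C_2^q).D_q\le H$.

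Let $\tilde\Gamma=\ker(\Mon(h\circ\varphi_r)\to\Mon(h_2))$. Its projection to a block is normal in $H$ and contains an element of order $q$ (from infinity). Hence it contains all elements of order $q$ of $H$, and therefore ${\rm Aug}(C_2^q).C_q$. By the induction hypothesis, the image of $\tilde\Gamma$ in $\Mon(f_{r-1})^{\deg h_2}$ contains $C_q^{\deg h_2}$, which supplies elements with one-point support. Now apply Lemma \ref{lem:fullsocle} to $\langle\tilde\Gamma,\sigma\rangle$, with $\sigma$ lifting a transitive cycle and ${\rm Aug}(C_2^q)$ a sum of faithful irreducible $\mathbb F_2[C_q]$-modules. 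This gives ${\rm Aug}(C_2^q)^{\deg h_2}\subseteq \Gamma/(\Gamma\cap C_p^n)$.

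Your deduction of a) from b) is fine and is essentially the paper's argument: take two weight-$2$ vectors in one block, lift them and raise to the $p$-th power to get involutions, then apply Lemma \ref{lem:normal}(1).
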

\begin{proof}
By induction over $r$. It suffices to prove the assertion for algebraically closed fields $k=\overline{k}$. The case $r=2$ follows from Theorem \ref{thm:affinesho}. Assume that the assertion holds for $r-1\ge 2$. In order to control the $p_r$-part of $\Gamma$, we first estimate the $2$-part. For this, let $h_2=f_1\circ\cdots\circ f_{r-2}$,  $K=\Gamma\cap C_{p_r}^{\deg(h)}$, $N=\ker(\Mon(f)\to \Mon(h_2))$ and $G:=\Mon(f)/K$. Then $G = \Mon(h\circ \varphi_r)$, where $\varphi_r\in k(X)$ denotes the quadratic rational function corresponding to the unique quadratic subextension of the Galois closure of $f_{r}(X)-t$. In particular, $G$ embeds into $H\wr \Mon(h_2)$, where $H:=\Mon(f_{r-1}\circ \varphi_r)$.

By Theorem \ref{thm:affinesho} (applied to $f_{r-1}\circ f_r$), we have ${\rm Aug}(C_2^{p_{r-1}}).D_{p_{r-1}}  \le H \le C_2\wr D_{p_{r-1}}$. We next claim that the image $\tilde{H}$ of the blocks kernel $\tilde{\Gamma}:=\ker(\Mon(h\circ \varphi_r)\to \Mon(h_2)) \cong N/K$ under projection to any of the $\deg(h_2)$ blocks still contains ${\rm Aug}(C_2^{p_{r-1}})$. For this, note that $\tilde{H}$ is a normal subgroup of $H$ and contains an element of order $p_{r-1}$ as is evident from ramification over infinity. But then $\tilde{H}$ contains the normal closure of a $p_{r-1}$-Sylow group of $H$, i.e., contains all its elements of order $p_{r-1}$. Since ${\rm Aug}(C_2^{p_{r-1}}).C_{p_{r-1}} = (C_2\wr C_{p_{r-1}})\cap \textrm{Alt}(2p_{r-1})$ has no element of order $2p_{r-1}$, all elements in ${\rm Aug}(C_2^{p_{r-1}}).C_{p_{r-1}}$ outside of ${\rm Aug}(C_2^{p_{r-1}})$ are of order $p_{r-1}$, whence $\tilde{H}\supseteq {\rm Aug}(C_2^{p_{r-1}}).C_{p_{r-1}}$, showing the claim.

We will now choose a setup allowing us to invoke Lemma \ref{lem:fullsocle}. Let $\widehat{\sigma}$ be a generator of a cyclic transitive subgroup in $\Mon(h_2)$ (e.g., an inertia group generator at infinity), and $\sigma$ a preimage of $\widehat{\sigma}$ in $G$. Set $\tilde{G}:=\langle \tilde{\Gamma},\sigma\rangle$. Then $\tilde{G}\le \tilde{H}\wr C_d$ for $d:=\deg(h_2)$, and $\tilde{\Gamma}$ is still the blocks kernel in $\tilde{G}$. Moreover, there is a natural map $\rho: \tilde{\Gamma}\to D_{p_{r-1}}^d=\Mon(f_{r-1})^d$ whose image equals the kernel $\ker(\Mon(h)\to \Mon(h_2))$. By the induction hypothesis, $\rho(\tilde{\Gamma})$ contains $C_{p_{r-1}}^d$. Also note that ${\rm Aug}(C_2^{p_{r-1}})$ is  a direct sum of faithful irreducible modules since the only nontrivial normal subgroup $C_{p_{r-1}}\le D_{p_{r-1}}$ acts faithfully on each nondiagonal submodule of $C_2^{p_{r-1}}$. 
Now, applying Lemma \ref{lem:fullsocle} (to $\tilde{G}$, $\tilde{\Gamma}$, $\tilde{H}$), gives ${\rm Aug}(C_2^{p_{r-1}})^d \subseteq \ker(\rho) = \ker(\Mon(h\circ \varphi_r)\to\Mon(h)) = \Gamma/(\Gamma\cap C_{p^r})$, which gives the order of the $2$-part asserted in b).

From this, a) will follow by yet another application of Lemma \ref{lem:fullsocle}: this time, take more simply $\tilde{\Gamma}=\Gamma$ and $\tilde{G}=\langle\Gamma, \sigma\rangle$ for a cyclic transitive subgroup $\langle\sigma \rangle$ of $\Mon(f)$. Then $\tilde{G}\le D_{p_r}\wr C_{d'}$ for $d':=\deg(h)$. Setting $\rho:D_{p_r}^{d'}\to C_2^{d'}$, we note that we have just shown $\rho(D_{p_r}^{d'})$ to contain ${\rm Aug}(C_2^{p_{r-1}})^{d'/p_{r-1}}$. Since $p_{r-1}\ge 3$, the group ${\rm Aug}(C_2^{p_{r-1}})$ contains two elements whose supports have just one element in common. The same then of course holds for ${\rm Aug}(C_2^{p_{r-1}})^{d'/p_{r-1}}$. Hence, Lemma \ref{lem:fullsocle} applies with the faithful $\mathbb{F}_p[C_2]$-module $C_{p_r}$, yielding $\Gamma\supseteq C_{p_r}^{\deg(h)}$, as asserted in a).
\end{proof}

\subsection{An application to arithmetic dynamics}
\label{sec:dyn_appl}
Let $k$ be a field of characteristic $0$. Theorem \ref{thm:comp_dihedral} has an immediate consequence for {\it dynamical monodromy groups} $\varprojlim_n \Mon(f^n)$ of polynomials of prime degree. 
Recall that such $f$ are either $\AGL_1$-polynomials or have almost simple monodromy group, with a ``large kernel" conclusion for the latter case already having been obtained in \cite[Corollary 4.5]{KNR24}. Theorem \ref{cor:dynnom}  extends this by taking care of the case of polynomials $f$ linearly equivalent (over $\overline{k}$) to $T_p$.
\begin{proof}[Proof of Theorem \ref{cor:dynnom}]
By assumption, $\Mon_{\overline{k}}(f) = D_{p}$. Since $f$ is not conjugate to $\pm T_p$, its iterate $f\circ f$ is not linearly equivalent to $T_{p^2}$. The first assertion now follows from Theorem \ref{thm:comp_dihedral}a). Regarding the second one, Theorem \ref{thm:comp_dihedral} gives a lower bound of $p^{\sum_{k=0}^{n-1} p^k} \cdot 2^{p^{n-1}}$ for the order of $\Mon_{\overline{k}}(f^n)$, whereas $|[D_p]^n|=(2p)^{\sum_{k=0}^{n-1} p^k}$. 
Thus 
$$\frac{\log|G|}{\log|H|} \ge \frac{(\sum_{k=0}^{n-1} p^k) + \log_p(2) p^{n-1}}{(\sum_{k=0}^{n-1} p^k)\cdot (1+\log_p(2))},$$ and therefore 
\begin{align*}\liminf_{n\to\infty} \frac{\log|G|}{\log|H|} & \ge  \frac{1}{1+\log_p(2)} \liminf_{n\to\infty} (1 + \log_p(2)\frac{p^{n-1}(p-1)}{p^n - 1}). 
\end{align*}
Since the limit on the right side is $1+\log_p(2)(1-1/p)$, we get: 
$$
\liminf_{n\to\infty} \frac{\log|G|}{\log|H|}\ge \frac{p(1+\log_p(2)) - \log_p(2)}{p(1+\log_p(2))} = 1-\frac{\log_p(2)}{p(1+\log_p(2))}.
$$
\end{proof}
Note that Theorem \ref{cor:dynnom} applies in particular to many PCF (post-critically finite) polynomials with dihedral monodromy group. 
The asserted containment of $[C_p]^\infty$ in $\varprojlim_n \Mon(f^n)$  is then notably different from the case of PCF {\it unicritical} polynomials $f$. For these, one has  $[[C_p]^\infty: \varprojlim_n \Mon_{\overline{k}}(f^n)]=\infty$, see e.g., \cite{AH25}.
For the case $p=3$, Theorem \ref{cor:dynnom} in fact applies to all non-unicritical polynomials not conjugate to $\pm T_3$ (see \cite{CubicPCF} for a list of PCF cubics over $\mathbb{Q}$), and gives a value of approximately $0.871$ for the Hausdorff dimension.

\appendix

\section{Exceptional monodromy groups of $2$-step decomposable polynomials with small kernel}\label{app:A}
Here we collect the monodromy groups of polynomials $f=g\circ h$ with $g,h$ indecomposable,
such that $\ker(\Mon(f)\to \Mon(g))$ is not large in our sense, and moreover $g\circ h$ does not fall into Cases \ref{mainthm_xntn} or \ref{mainthm_ritt} of Theorem \ref{thm:mainres}; see in particular Theorems \ref{thm:affS4I} and \ref{thm:nonsaffI}. We give the geometric monodromy group $G:=\Mon_{\overline{k}}(f)$ and the arithmetic monodromy group $A:=\Mon_k(f)$.
Consider the normalizer of $G$ in $S_d$, $d=\deg f$, a.k.a.\ {\it its symmetric normalizer}. This contains $A$. Since all but three of the groups $G$ are their own symmetric normalizer, one automatically gets $G=A$; the other three have index $2$ in their symmetric normalizer, giving one additional option for $A$.

\begin{table}[h!]
\begin{tabular}{c|c|c|c|c}
	$\deg(g)$ & $\Mon_{\overline{k}}(h)$ & $G:=\Mon_{\overline{k}}(f)$ & comments & $A:=\Mon_k(f)$\\
	\hline
	$4$ & $C_2$ & $2.S_4\cong \GL_2(3)$ & nonsplit, $8T23$ & $G$\\
	\hline
	$6$ & $C_2$ & $2.\PGL_2(5)$ & nonsplit, $12T124$& $G$\\
	%$7$ & $C_2$ & $C_2\times PSL_3(2)$ & Ritt move\\
	$7$ & $C_2$ & $C_2\times \PSL_3(2)$ & $14T17$& $G$\\
	$7$ & $C_2$ & $2^4.\PSL_3(2)$ & nonsplit, $14T42$& $G$\\ 
	$7$ & $C_2$ & $2^4\rtimes \PSL_3(2)$ & $14T43$& $G$\\ 
	$8$ & $C_2$ & $2.\PGL_2(7)$ & nonsplit, $16T1036$& $G$ \\ 
	%$9$ & $C_2$ & $C_2\times P\Gamma L_2(8)$ & Ritt move\\
	$10$ & $C_2$& $2.\PGaL_2(9)$ & nonsplit, $20T265$& $G$\\
	$11$ & $C_2$ & $C_2\times M_{11}$ & $22T26$& $G$\\
	% $13$ & $C_2$  & $C_2\times PSL_3(3)$ & Ritt move\\ 
	$13$ & $C_2$  & $C_2\times \PSL_3(3)$ & $26T47$& $G$\\
	% $15$ & $C_2$ & $C_2\times PSL_4(2)$ & Ritt move\\
	$15$ & $C_2$ & $2^5\rtimes \PSL_4(2)$ & $30T1893$& $G$\\
	$15$ & $C_2$ & $2^{11}\rtimes \PSL_4(2)$ & $30T3819$& $G$\\
	$21$ & $C_2$ & $2^{10}\rtimes \PGaL_3(4)$ & $42T3846$& $G$\\
	$23$ & $C_2$ & $2^{12}\rtimes M_{23}$ & $46T39$& $G$\\
	$31$ & $C_2$ & $2^{16}\rtimes \PSL_5(2)$ &  & $G$\\ 
	$31$ & $C_2$ & $2^{26} \rtimes \PSL_5(2)$ & & $G$\\
	$5$ & $C_3$ & $C_3\times A_5$ & $15T15$ & $G$ or $15T21$\\
	%$7$ & $C_3$ & $C_3\times PSL_3(2)$ & Ritt move\\
	$8$ & $C_3$ & $C_3\times \PGL_2(7)$ & $24T2668$& $G$\\
	$11$ & $C_3$ & $3^6\rtimes \PSL_2(11)$ & $33T63$ & $G$ or $33T69$ \\
	%$13$ & $C_3$ & $C_3\times PSL_3(3)$ & Ritt move\\
	$13$ & $C_3$ & $3^7\rtimes \PSL_3(3)$ & $39T210$& $G$ or $39T218$\\
	$13$ & $D_3$ & $(3^{10}\rtimes 2)\rtimes \PSL_3(3)$ & $39T248$& $G$
\end{tabular}
\caption{Groups occurring as monodromy groups of length-$2$ decomposable polynomials $f=g\circ h$ with ``small kernel", not linearly equivalent to $X^n$ or $T_n$, and without a Ritt move.}
\label{table:sporcases}
\end{table}

In all but the first line of the table, $\Mon(f)$ is nonsolvable; moreover, in all cases one has $h\sim X^2$, $h \sim X^3$ or $h\sim T_3$. 
We furthermore keep track of whether the group extension $\Mon(f)\to\Mon(g)$ is split or not, and whenever possible give the precise label of $\Mon(f)$ in Magma's transitive group database (since, e.g., this information is needed in case of isomorphism $\Mon(f)\cong \Mon(g)\times \Mon(h)$, in order to exclude the presence of a Ritt move in  Proposition \ref{prop:rittimpldiag}). Note that the groups $\Mon(f)$ of the largest occurring degree $62$ are currently not in the scope of the transitive group database, although they may be uniquely identified by the fact that there is only a single conjugacy class of transitive subgroups of $C_2\wr \PSL_5(2)$ of each indicated type. We also note that, due to the explicitly known polynomials with exceptional nonsolvable monodromy groups (e.g., \cite{Mul2}, \cite{CC}, \cite{Elkies}) and due to the fact that only very specific ramification types can yield the monodromy groups in Table \ref{table:sporcases}, it is in principle possible to compute all polynomials giving rise to these monodromy groups, although the calculations would be very tedious. We mention, as a sample result, that the case $\Mon(f) = 15T15\cong C_3\times A_5$ occurs exactly for $f=g\circ h$ linearly equivalent to $(X^3(X^2+5X+40))\circ (X^3+\alpha)$ with $\alpha$ a root of $X^2+5X+40$. Indeed, the only way to produce this monodromy group is to compose the $A_5$-polynomial with ramification type $([5], [3,1^2], [2^2,1])$ with $X^3$ in such a way that the branch point $0$ of $X^3$ is one of the special points over the $[3,1^2]$ branch point. Up to linear equivalence, this yields the above polynomial. For some further explicit computational results, note that the cases with $\Mon_{\overline{k}}(f) = 8T23, 12T124, 15T15, 16T1036, 20T265, 24T2668$ all feature in the classification of pairs of Kronecker conjugate polynomials of composition length $2$ achieved in \cite{Mul}, with the explicit polynomials being computed in Section 4 of that paper.

Coincidentally, none of the polynomials $f$  corresponding to the groups in Table \ref{table:sporcases} can be defined over $\mathbb{Q}$ or even $\mathbb{R}$, which can most easily be seen from the fact that none of the arithmetic monodromy groups $A$ contain an element $x$ of the normalizer of a cyclic transitive subgroup $\langle\tau\rangle$ with $x\tau x^{-1}=\tau^{-1}$, something that would however be necessary for definability over $\mathbb{R}$ due to the action of complex conjugation on the inertia group at infinity, cf., e.g., \cite[Theorem I.10.3]{MM}.

\section{Some extensions of Lemma \ref{lem:fullsocle}}
For the sake of future applications to lower-bounding monodromy groups of compositions of arbitrarily many polynomials, we record here some generalizations of Lemma \ref{lem:fullsocle}, namely to modules $W'$ that need not be semisimple.

\begin{lem}
\label{lem:fullsocle_heart}
Let $H=W.U$, 
where $W$ is a faithful submodule of the $\mathbb{F}_p[U]$-permutation module.  
Let $G\le H\wr S_d$
have transitive image under $\pi:G\to S_d$. Let $\Gamma=\ker(\pi) = G\cap H^d$ and $\Delta  = \Gamma\cap W^d$. Furthermore, let $\rho:\Gamma\to \GL(W)^d$ be the  action of $\Gamma\leq H^d$ on $W^d$. Assume all of the following:
\begin{itemize}
	\item[i)] 
	The image of $\Gamma$
	under projection to (any) $H$-component equals $W.U'$ for $U'\le U$ such that  $W=W_1\oplus\dots \oplus W_r$ is a direct sum of indecomposable $\mathbb{F}_p[U']$-modules $W_j$, $j=1,\dots, r$. 
	Let $\rho_j:\Gamma\to \GL(W_j)^d$ be the  action of $\Gamma\leq H^d$ on $W_j^d$. 
	\item[ii)] Moreover $\tilde{W}_j := W_j/(W_j\cap D)$ is an irreducible $\mathbb{F}_p[U']$-module for all $j\in \{1,\dots, r\}$, where $D\subseteq W$ is the diagonal. 
	Let $\tilde\rho_j:\Gamma\to \GL(\tilde W_j)^d$ be the  action of $\Gamma\leq H^d$ on $\tilde W_j^d$. 
	
	\item[iii)] There exist $s\ge 1$ elements $x_1,\dots,x_s\in \Gamma$ 
	admitting intersections $\bigcap_{i=1}^s\supp(\rho(x_i))=\bigcap_{i=1}^s\supp(\tilde{\rho}(x_i))$ of size $1$.  Let $J$ be the set of all $j$ for which the intersection $\bigcap_{i=1}^s\supp(\tilde \rho_j(x_i))$ is nonempty (and hence of size $1$). 
	%\begin{equation}
	%\label{eq:support1_strong}
	%\bigcap_{m=1}^s\supp(\rho_j(x_m)) = %\bigcap_{m=1}^s\supp(\tilde{\rho}_j(x_m))\subseteq \{i\},
	%\end{equation} where $\rho_j(x)$ and $\tilde{\rho}_j(x)$ denote the image of $\rho(x)$ in $GL(W_j)^d$ and in $GL(\tilde{W}_j)^d$, respectively.
	%Need to add condition on all of W??
\end{itemize} 
Then $\Delta\supseteq W'^d$, where $W':= \bigoplus_{j\in J} W_j$.
\end{lem}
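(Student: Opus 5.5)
I would follow the proof of Lemma \ref{lem:fullsocle} closely, running the same commutator-and-induction construction but doing the irreducibility work modulo the diagonal. Fix $j\in J$ and let $i$ be the unique element of $\bigcap_{m=1}^s\supp(\rho(x_m))=\bigcap_{m=1}^s\supp(\tilde\rho(x_m))$. The goal is to show that $\Delta$ contains every element supported only on the $i$-th component with $i$-th entry in $W_j$. Conjugating by $G$, which is transitive on blocks, then gives $W_j^d\subseteq\Delta$, and summing over $j\in J$ gives $W'^d\subseteq \Delta$.

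\textbf{Step 1 (analogue of Observation 1).} Let $\gamma\in\Gamma$ have $\gamma_i\in W_j\setminus(W_j\cap D)$, and let $u\in U'$ act nontrivially on $\tilde W_j$. Then some $\Gamma$-conjugate $\tilde\gamma$ of $\gamma$ has $\tilde\gamma_i\in W_j$ with image in $\tilde W_j$ not fixed by $u$. This holds because $\tilde W_j$ is irreducible by ii): the $W.U'$-translates of the image of $\gamma_i$ span $\tilde W_j$, and $u$ does not fix all of them. Assumption i) lets us realize any element of $W.U'$ as the $i$-th entry of some element of $\Gamma$. Since conjugation is componentwise, supports are preserved.

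\textbf{Step 2 (the two inductions).} First I build $z\in\Gamma$ with $\supp(\rho(z))=\emptyset$ and $z_i\in W_j$ nonzero modulo $D$. To do this, start from an element with $i$-th entry in $W_j\setminus D$, which exists by i) provided $W_j\not\subseteq D$; this is automatic, since otherwise $\tilde W_j=0$ and $j\notin J$. Then iterate commutators $[\tilde z,x_k]$ as in Claim 1. The commutator kills every component where either factor acts trivially on $W$, and its $i$-th entry is $\tilde z_i^{(x_k)_i}-\tilde z_i$ in additive notation. That entry lies in $W_j$, because $W_j$ is $U'$-invariant, and it is nonzero modulo $D$ by the choice made in Step 1. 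Here I use that $(x_k)_i$ acts nontrivially on $\tilde W_j$, which holds because $i\in\supp(\tilde\rho_j(x_k))$ for $j\in J$. Faithfulness of $U$ on $W$ then puts $z$ in $\Delta$. Running Claim 2 verbatim, with commutators in $\Delta$, produces $y\in\Delta$ with $\supp(y)=\{i\}$ and $y_i\in W_j\setminus D$.

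\textbf{Step 3 (the main obstacle).} In Lemma \ref{lem:fullsocle}, conjugating $y$ immediately gives all of $W_j$ in the $i$-th slot. Here the $\Gamma$-conjugates of $y_i$ together with sums only generate the $\mathbb F_p[U']$-submodule $M$ of $W_j$ spanned by $y_i$. We know only that $M+(W_j\cap D)=W_j$, by irreducibility of $\tilde W_j$. I must rule out $M$ being a proper complement-like submodule missing $W_j\cap D$. Because $W_j$ is indecomposable and $W_j\cap D$ is at most one-dimensional (a submodule of the trivial diagonal), $W_j\cap D$ lies in the radical whenever $W_j\ne \tilde W_j$. Then $M+(W_j\cap D)=W_j$ forces $M=W_j$ by the Nakayama-type argument: a submodule supplementing a radical submodule is everything. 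If instead $W_j\cap D=0$, there is nothing to show. The same reasoning is needed in Step 1 to guarantee that starting vectors nonzero modulo $D$ exist in each slot. Verifying carefully that $W_j\cap D\subseteq\operatorname{rad}(W_j)$ follows from indecomposability, via a maximal submodule not containing it splitting off, is where I expect the real work to lie.
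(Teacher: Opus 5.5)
Your proposal is correct and follows the paper's proof essentially verbatim. The paper likewise replaces Observation~1 by its analogue modulo $D$, using irreducibility of $\tilde W_j$ and the fact that each $(x_k)_i$ acts nontrivially on $\tilde W_j$ for $j\in J$. It then reruns Claims~1 and~2, and concludes because the $\mathbb{F}_p[U']$-submodule generated by any element of $W_j\setminus D$ is all of $W_j$.

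Your Step~3 spells out the indecomposability argument that the paper compresses into ``$\dim(W_j\cap D)\le 1$''. It can be shortened: since $W_j\cap D$ is simple or zero, either it lies in $M$ (so $M=W_j$), or $W_j=M\oplus(W_j\cap D)$, which contradicts indecomposability.
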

\begin{proof} 
Note that the irreducibility of $W_j$ was only used to guarantee that Observation 1 in the proof of Lemma \ref{lem:fullsocle} holds. Using instead the irreducibility assumption on $\tilde{W}_j$, this can be replaced by: 

\underline{Observation 1'}: Given any $\gamma \in \Gamma$ whose $i$-th component $\gamma_i$ lies in $W_j\setminus D$ and any $1\ne u\in U$ acting nontrivially on $\tilde{W}_j$, there exists a $\Gamma$-conjugate $\tilde{\gamma}\in \Gamma$ of $\gamma$ such that $\tilde{\gamma}_i$ lies in $W_j$ {\textit and} $\tilde{\gamma}_i^u \not\equiv \tilde{\gamma}_i \bmod D$.

The remainder of the proof can therefore be carried out in analogy with that of Lemma \ref{lem:fullsocle}. Since  $x_1,\dots, x_s$ all act nontrivially on $\tilde{W}_j$ for any $j\in J$,  Observation 1' ensures that  
all commutators formed in the proof have a component entry in $W_j\setminus D$. The proof is concluded by noting that, since the dimension of the diagonal submodule $W_j\cap D$ is at most $1$, the $\mathbb{F}_p[U']$-submodule of $W_j$ generated by any element of $W_j\setminus D$ is necessarily $W_j$ itself. 
Thus $\Delta\supseteq W_j^d$ follows as before.
\end{proof}

\begin{exa}
\label{ex:2s4}
In the setting of Lemma \ref{lem:fullsocle_heart}, let ${\rm Aug}(\mathbb{F}_2^4).S_4 \le H \le C_2\wr S_4$. Since the $S_4$-permutation module $\mathbb{F}_2^4$ has irreducible heart ${\rm Aug}(\mathbb{F}_2^4)/{\rm diag(\mathbb{F}_2^4)}$, Assumption ii) holds with $W'={\rm Aug}(\mathbb{F}_2^4)$. Moreover, any element of order $3$ in $S_4$ acts nontrivially on the heart. Thus, if $d\ge 3$ and $\Gamma/\Delta$ contains a subgroup $C_3^{d-1}$, Assumption iii) clearly holds as well (namely, with $s=2$ elements of order $3$, each supported on only two components), yielding the conclusion $\Delta\supseteq ({\rm Aug}(\mathbb{F}_2^4))^d$.
\end{exa}

A situation still not falling into the scope of Lemma \ref{lem:fullsocle_heart} is $H\le C_p\wr C_p$ ($p$ prime), since the $\mathbb{F}_p[C_p]$ permutation module is indecomposable with submodules of every dimension $0\le d\le p$. To obtain a reasonably simple criterion here, we restrict to a particular special case of Assumption iii).

\begin{lem}
\label{lem:fullsocle_indecomp}
Let $H=W.U$, where 
$W$ is an $\mathbb{F}_p[U]$-module.  
Let $G\le H\wr S_d$  
be such that $\pi:G\to S_d$ maps onto a transitive subgroup of $S_d$, and let $\Gamma=\ker(\pi) = G\cap H^d$. Furthermore, let $\rho:\Gamma\to U^d$ be  the natural projection and $\Delta = \ker(\rho) = \Gamma\cap W^d$. Assume all of the following:
\begin{itemize}
	\item[i)] The image of $\Gamma$
	under projection to one component contains $H':=W.U'$ for some subgroup $U'\le U$ such that $W$ is an indecomposable $\mathbb{F}_p[U']$-module.
	\item[ii)] $\Gamma/\Delta \supseteq U'^d$. 
\end{itemize} 
Set $W_0:=W$ and $W_i:=[W_{i-1},H']$ for $i\ge 1$, where $[W_{i-1},H']$ denotes the module generated by commutators.
Then the following hold:
\begin{itemize}
	\item[1)] $\Delta \supseteq W_2^d$, 
	\item[2)] If additionally $W_1 \subsetneq W$ is the unique maximal $\mathbb{F}_p[U']$-submodule of $W$, then either $W_1=W_2$ or
	$\Delta/W_2^d$ contains an element of support size $2$ in $(W_1/W_2)^d$. 
\end{itemize}
\end{lem}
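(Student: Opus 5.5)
The plan is to build elements of $\Delta$ as commutators of elements of $\Gamma$ whose $U$-parts (images under $\rho$) are supported on a single component. We exploit that $W^d$ is abelian and that elements of $W$ commute with each other.

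For each $i$, let $E_i:=\{\gamma\in\Gamma \mid \supp(\rho(\gamma))\subseteq\{i\}\}$. Assumption ii) gives $\rho(E_i)\supseteq U'$ on the $i$-th component. For $e\in E_i$ all components $e_k$ with $k\ne i$ lie in $W$. Throughout I write $W$ additively, so that for $w\in W$ and $h=w'u\in H$ the commutator $[w,h]$ equals $\pm(u-1)w$. In particular $[W,H']$ is spanned by the $(u-1)W$ with $u\in U'$.

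\textbf{Part 1).} Fix $i$, $w\in W$ and $u,u'\in U'$. By i) pick $\gamma\in\Gamma$ with $\gamma_i=w$, and pick $e,e'\in E_i$ with $\rho(e)_i=u$ and $\rho(e')_i=u'$. The commutator $c:=[\gamma,e]$ satisfies $\rho(c)=1$, since $\rho(\gamma)_i=1$ and $\rho(e)$ is supported on $\{i\}$. Hence $c\in\Delta$, with $c_i=\pm(u-1)w\in W_1$ and $c_k\in W$ for all $k$. Next form $[c,e']$. For $k\ne i$ both $c_k$ and $e'_k$ lie in the abelian group $W$, so this component vanishes. At $i$ the component is $\pm(u'-1)(u-1)w$. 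So $\Delta$ contains elements supported only on $\{i\}$ whose $i$-th entry is any element $(u'-1)(u-1)w$. Such elements span $W_2$, because $(u'-1)$ applied to the spanning set $(u-1)W$ of $W_1$ gives a spanning set of $[W_1,H']$, and $W$ acts trivially on $W$. Taking products gives $\{0\}^{i-1}\times W_2\times\{0\}^{d-i}\subseteq\Delta$. Transitivity of $\pi(G)$, acting by conjugation on components, then gives $W_2^d\subseteq\Delta$.

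\textbf{Part 2).} Assume $W_1\ne W_2$ and work modulo $W_2^d$; here $H'$ acts trivially on $W_1/W_2$. Since $W_1$ is the unique maximal submodule, $W/W_1$ is irreducible. As $[W,H']=W_1$, the group $H'$ acts trivially on $W/W_1$, so $W/W_1$ is $1$-dimensional, spanned by some $w_0$. For each $u\in U'$ the map $W/W_1\to W_1/W_2$, $w\mapsto (u-1)w$, is well defined. At least one of these maps is nonzero, since otherwise $W_1=\sum_u (u-1)W\subseteq W_2$. Fix such a $u$.

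I then split into two cases.

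(a) Some $f\in E_j$, $j\ne i$, has $f_i\notin W_1$. Take $e\in E_i$ with $\rho(e)_i=u$. The commutator $[e,f]$ lies in $\Delta$ and is supported on $\{i,j\}$, because outside these components both entries lie in $W$. Its $i$-th entry is $\pm(u-1)f_i$, which is nonzero modulo $W_2$. If the $j$-th entry is also nonzero modulo $W_2$, we have the desired support-$2$ element. Otherwise we have a support-$1$ element, and multiplying it by a $G$-conjugate moved to another component (by transitivity) yields support exactly $2$.

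(b) Otherwise $f_i\in W_1$ for all $f\in E_j$ and all $i\ne j$. Take $\gamma$ with $\gamma_i=w_0$ and $e\in E_i$ as before. Then $c=[\gamma,e]\in\Delta$ has $c_i\equiv\pm(u-1)w_0\not\equiv 0$ modulo $W_2$. For $k\ne i$ its entry is $c_k=\pm(\rho(\gamma)_k-1)e_k$ with $e_k\in W_1$, so $c_k\in W_2$. Thus $c$ has support $\{i\}$ modulo $W_2$, and we finish as in (a).

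The main obstacle is this part 2): the construction of part 1) only controls one component, so getting a controlled support of size exactly $2$ needs the case distinction and the $1$-dimensionality of $W/W_1$. I would double-check that the intended reading is ``support of size $2$'' and not ``at most $2$''. I would also check the sign and ordering conventions in the commutator identities, but I do not expect these to cause difficulties.
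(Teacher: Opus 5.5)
Your proposal is correct and is essentially the paper's proof: part 1) uses the same double commutator (an element with $i$-th entry $w$ against two elements whose $\rho$-image is supported on $\{i\}$), and part 2) uses the same case split, namely whether some element with $\rho$-support $\{j\}$ has an entry outside $W_1$ at another component (commute two such elements) or not (a single commutator gives support $1$ modulo $W_2$). Your observation that $W/W_1$ is one-dimensional, so that one fixed $u$ works for every $w\notin W_1$, replaces the paper's identity $[w,H']=[W,H']$; in case (a), take $f\in E_j$ with $\rho(f)_j\in U'$, as the paper does, so that the $j$-th entry of $[e,f]$ is guaranteed to lie in $W_1$.
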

\begin{proof}
Let $u,v\in U'$. Due to Assumption ii), there exist $x,y\in \Gamma$ with $\rho(x)=(u,1,\dots, 1)$ and $\rho(y)=(v,1,\dots, 1)$. In other words, $x=(\tilde{u}, w_2,\dots, w_d)$ for some $\tilde{u}\in H$ mapping to $u$ under the projection $H\to U$, and for some $w_2,\dots, w_d\in W$; and analogously for $y$. 
Moreover, for every $w\in W$, there exists $z\in \Gamma$ with first component $z_1=w$, due to Assumption i). Then the commutator $[z,x]$ is an element of $\Delta$ with first coordinate $[w,\tilde{u}]$. Thus, the commutator $[[z,x],y]\in \Delta$ is supported only on the first component, with entry $[[w,\tilde{u}],\tilde{v}]$. As $u,v$ run through all of $U'$ and $w$ through all $W$, these elements generate all of $W_2$, so that $\Delta \supseteq W_2^d$, showing 1). Next, consider the quotient module $\tilde{\Delta}:=\Delta/W_2^d$. 
We distinguish two cases, depending on the component entries of the element $x$ above.

{\it Case 1}: For all $x$ as above, it holds that $w_j\in W_1$ for all $j\in \{2,\dots, d\}$. Then all but the first component entry of $[z,x]$ are in fact in $W_2$. On the other hand, as soon as $W_1\subsetneq W$, we may pick $z$ such that its first component entry $w$ is in $W\setminus W_1$. The additional assumption of $W_1$ being the unique maximal submodule of $W$ implies that the $\mathbb{F}_p[U']$-submodule generated by $w$ is all of $W$. If the module $[w, H]$ generated by all commutators $[w, h]$, $h\in H'$ is contained in $W_2$, then $W_1=[W,H'] = [w, H'] = W_2$. We may thus assume that there exists $\tilde{u}\in H'$ with $[w, \tilde{u}] \notin W_2$,
so that for this choice of $w$ and $\tilde{u}$, the element $[z,x]$ maps to an element of $\tilde{\Delta}$ supported exactly on the first component, trivially implying 2).

{\it Case 2}: There exists $x$ as above and $j\in \{2,\dots, d\}$ with $w_j\notin W_1$. 
%The additional assumption of $W_1$ being the unique maximal submodule of $W$ implies that the $\mathbb{F}_p[U]$-submodule generated by $w_j$ is all of $W$. If the module $[w_j, H]$ generated by all commutators $[w_j, h]$, $h\in H$ is contained in $W_2$, then $W_1=[W,H] = [w_j, H] = W_2$. 
As in Case 1), we may  assume that there exists $h\in H'$ with $[w_j, h] \notin W_2$. By Assumption ii), we may pick $\tilde{x}\in \Gamma$ with $\supp(\rho(\tilde{x})) = \{j\}$ and whose $j$-th component $\tilde{x}_j$ has the same image as $h$ under projection $H\to U$.
Then $[\tilde{x},x]$ is supported at most on the first and the $j$-th component, and additionally the $j$-th component entry equals $[w_j, h] \notin W_2$. This again implies 2).
\end{proof}

\begin{rmk}
\label{rmk:stronger}
The commutator approach used in the above proof shows  that in fact $W_2^d \subseteq [\Gamma, \Gamma]\cap \Delta$, and even more precisely that $W_2^d\subseteq [\Gamma_0,\Gamma_0]\cap \Delta$ with $\Gamma_0\subseteq \Gamma$ the preimage of $U'^d\subseteq \Gamma/\Delta$.
\end{rmk}

\begin{exa}
\label{exa:cpcp}
For $W=\mathbb{F}_p^p$ the permutation module under the cyclic group $U'=C_p$ (i.e., $H'=C_p\wr C_p$), $W_j$ is the unique submodule of codimension $j$ in $W$; in particular, one has $W_1=\textrm{Aug}(\mathbb{F}_p^p)$ and $W_1/W_2\cong \mathbb{F}_p$.
In the case where $\pi(G)\le S_d$ additionally acts primitively (e.g., when $d$ is a prime), an element of support size $2$ necessarily generates a submodule of codimension $1$ in $(W_1/W_2)^d$. Lemma \ref{lem:fullsocle_indecomp} thus asserts that in this case $\Delta$ contains a submodule of codimension $\le 1$ of $(\textrm{Aug}(\mathbb{F}_p^p))^d$, and due to Remark \ref{rmk:stronger}, this submodule is even contained in $\Delta\cap [\Gamma_p,\Gamma_p]$ with a $p$-Sylow subgroup $\Gamma_p$ of $\Gamma$. 
\end{exa}

\bibliography{biblio2}
\bibliographystyle{amsalpha}
\end{document}